\theoremstyle{plain}
\newtheorem{thm}{Theorem}[section]
\newtheorem{lemma}[thm]{Lemma}
\newtheorem{prop}[thm]{Proposition}
\newtheorem{coro}[thm]{Corollary}
\newtheorem{construction}[thm]{Construction}
\newtheorem{assumption}[thm]{Assumption}
\theoremstyle{definition}
\newtheorem{definition}[thm]{Definition}
\newtheorem{remark}[thm]{Remark}
\theoremstyle{remark}
\numberwithin{equation}{section} 	
\newcommand{\R}{\mathbb{R}}
\newcommand{\Z}{\mathbb{Z}}
\newcommand{\C}{\mathbb{C}}
\newcommand{\K}{\mathbb{K}}
\newcommand{\La}{\Lambda}
\newcommand{\la}{\lambda}
\newcommand{\p}{\varphi}
\newcommand{\e}{\varepsilon}
\newcommand{\dd}{\partial}
\newcommand{\ip}{\, \lrcorner \,}
\newcommand{\es}{\varnothing}
\newcommand{\op}{\operatorname}
\newcommand{\sse}{\subseteq}
\newcommand{\x}{\times}
\newcommand{\wt}{\widetilde}
\newcommand{\wh}{\widehat}
\newcommand{\std}{\text{std}}
\newcommand{\bs}{\text{BS}}
\newcommand{\as}{\text{AS}}
\newcommand{\sss}{\vspace{2.5 mm}}
\newcommand{\bcs}{\natural}
\newcommand{\ttimes}{\boxtimes}
\newcommand{\Int}{\operatorname{Int}}
\newcommand{\sbf}{\text{SF}}
\newcommand{\flex}{\text{Flex}}
\newcommand{\bdy}{\partial}
\newcommand{\D}{\mathbb{D}}
\newcommand{\calL}{\mathcal{L}}
\newcommand{\Op}{\mathcal{O}p\,}
\newcommand{\calM}{\mathcal{M}}
\newcommand{\sh}{SH}
\newcommand{\fL}{\mathcal{L}}
\newcommand{\fW}{\mathcal{W}}
\newcommand{\fS}{\mathcal{S}}
\newcommand{\fLhat}{\wh{\mathcal{L}}}
\newcommand{\fShat}{\wh{\mathcal{S}}}
\newcommand{\can}{\text{can}}
\newcommand{\oo}{\mathfrak{0}}
\newcommand{\signu}{s(u)}
\newcommand{\dtop}{D^\text{top}}
\newcommand{\lacan}{\lambda_{\text{can}}}
\newcommand{\lamod}{\lambda_{\text{mod}}}
\date{\today}
\begin{document}

\begin{abstract}
We introduce a class of Weinstein domains which are sublevel sets of flexible Weinstein manifolds but are not themselves flexible. These manifolds exhibit rather subtle behavior with respect to both holomorphic curve invariants and symplectic flexibility.
We construct a large class of examples and prove that every flexible Weinstein manifold can be Weinstein homotoped to have a nonflexible sublevel set.
\end{abstract}

\title{Subflexible symplectic manifolds}
\author{Emmy Murphy}

\thanks{The first author was partially supported by NSF grant DMS-1510305.}
\author{Kyler Siegel}
\thanks{The second author was partially supported by NSF grant DGE-114747.}

\maketitle

\tableofcontents

\section{Introduction}

It has been known since the work of Gromov \cite{gromov2013partial} that subcritical isotropic submanifolds of symplectic and contact manifolds satisfy an h-principle, meaning they belong to the realm of algebraic topology. In the intervening time, a rich theory of holomorphic curve invariants has shown that Lagrangians and Legendrians are generally quite rigid geometric objects. However, the last few years have seen significant progress in the flexible side of symplectic topology. The first author's discovery of {\em loose Legendrians} \cite{murphy2012loose} shows that, at least for isotropics in high dimensional contact manifolds, symplectic flexibility extends well into the critical dimension. Cieliebak--Eliashberg subsequently used loose Legendrians to define {\em flexible Weinstein manifolds}, extending this flexibility to the theory of symplectic handlebodies. In a slightly different direction, it was shown in \cite{eliashberg2013lagrangian} that Lagrangian embeddings with a loose Legendrian negative end also satisfy an h-principle. Applications of this have included Lagrangian immersions with surprisingly few self-intersection points \cite{ekholm2013constructing}, an h-principle for symplectic embeddings of flexible Weinstein manifolds \cite{eliashberg2013lagrangian} (see also Theorem \ref{thm:subflex embed} below), and a complete classification of the smooth topology of polynomially and rationally convex domains in high dimensional affine space \cite{cieliebak2013topology}.

On the other hand, certain questions about the precise nature of these flexible objects have been thus far unclear. 
One question raised by Cieliebak--Eliashberg (see Remark 11.30(3) in \cite{cieliebak2012stein}) is whether the notion of flexibility for Weinstein manifolds is invariant under Weinstein homotopies, or if it somehow depends on how we chop up the manifold into elementary pieces.
A closely related question is whether a subdomain of a flexible Weinstein manifold is necessarily flexible.
More specifically, is every sublevel set of a flexible Weinstein structure on $\C^n$ necessarily flexible?
By the work of Cieliebak--Eliashberg (see the proof of Theorem \ref{thm:pc domains} below), this is equivalent to asking whether every polynomially convex domain in $\C^n$ is necessarily flexible, and the affirmative answer was conjectured in \cite{cieliebak2013topology}.

In this paper, our main result is the following:
\begin{thm}\label{thm:mainthmintro}
Every flexible Weinstein manifold has, after a Weinstein homotopy, a nonflexible sublevel set.
\end{thm}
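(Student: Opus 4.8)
The plan is to build, inside a given flexible Weinstein manifold $W^{2n}$, a sublevel set $W_0$ which contains a symplectically embedded copy of a Weinstein domain known to be nonflexible by holomorphic-curve methods, and then argue that this embedding obstructs flexibility of $W_0$ while remaining invisible after passing to the larger flexible $W$. The starting point is the observation that flexibility is a property of the \emph{whole} Weinstein structure, detected at the top level by loose Legendrian attaching spheres; a sublevel set records only a truncated handle decomposition, and the key idea is that the Legendrian attaching spheres of $W_0$ need not be loose even though, after attaching the remaining handles of $W$, everything becomes loose (looseness is destroyed by the ambient isotopy class changing). So first I would fix a candidate ``building block'': a Weinstein domain $P$ whose symplectic cohomology (or wrapped Fukaya category) is nonzero, hence $P$ is not flexible, but which is smoothly and even formally as simple as possible --- for instance something built from a single subcritical handle and a single critical handle along a non-loose Legendrian whose rotation/formal data matches that of a loose one. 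A natural choice is the Weinstein neighborhood of a Lagrangian plumbing, or a ``flexibilization-able'' domain in the sense that attaching one further Weinstein handle makes it flexible.

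Second, I would invoke the existing h-principle for symplectic embeddings of flexible Weinstein manifolds (Theorem \ref{thm:subflex embed} above) in reverse: given any flexible $W$, I can find a Weinstein homotopy of $W$ after which $W$ admits a sublevel set Weinstein-homotopic to $P \cup (\text{handles})$, i.e.\ $W$ can be reconstructed from $P$ by attaching further Weinstein handles, these handles chosen so as to (i) kill the topology of $P$ down to that of $W$ and (ii) render all attaching spheres loose, which is possible precisely because $W$ is flexible and loose Legendrians are abundant once the ambient contact manifold is subcritically stabilized. Concretely: start with the flexible handle decomposition of $W$, and do handle slides / cancellations (a Weinstein homotopy) to arrange that a partial union of the handles is Weinstein homotopic to $P$; the content is that the formal/topological flexibility of $W$ gives enough room to realize $P$ as an intermediate stage. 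Then set $W_0 := P$ (with its induced sublevel structure).

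Third, I would verify nonflexibility of $W_0$: this is where holomorphic curves enter. If $W_0$ were flexible, then by Cieliebak--Eliashberg its symplectic cohomology would vanish; but $W_0 \simeq P$ has been chosen with $SH^*(W_0) \neq 0$, a contradiction. Equivalently, one exhibits a nonzero element of the wrapped Fukaya category or a nonconstant holomorphic curve with boundary on a Lagrangian in $W_0$ that cannot exist in a flexible domain. The subtlety to check carefully is that these invariants are genuinely invariants of the Weinstein \emph{domain} $W_0$ (not just of the open manifold $W$), so that the flexibility question for the sublevel set is legitimately obstructed; this is standard for $SH^*$ of Weinstein domains but must be stated precisely.

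The main obstacle I anticipate is the second step: arranging, by an explicit Weinstein homotopy, that a \emph{given} flexible $W$ --- about whose handle decomposition we a priori know nothing beyond flexibility --- has a sublevel set Weinstein-equivalent to our fixed nonflexible model $P$. One needs $P$ flexible enough that attaching Weinstein handles to it can produce \emph{any} flexible $W$ of the appropriate dimension (a ``universality'' property of $P$ after stabilization), and simultaneously rigid enough that $SH^*(P) \neq 0$. Balancing these --- and in particular showing that the handle attachments needed to turn $P$ into $W$ can always be made \emph{loose}, using that subcritical surgery on the contact boundary makes Legendrians loose and that $W$'s own flexibility can be imported via the h-principle --- is the technical heart of the argument; the remaining steps are either citations or routine checks that the relevant holomorphic-curve invariant is a domain invariant and is nonzero for the explicit $P$.
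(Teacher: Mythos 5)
There is a genuine gap, and it is fatal to the proposal as written. Your steps (2) and (3) are mutually incompatible: you ask for a building block $P$ that is simultaneously a sublevel set of a flexible Weinstein manifold \emph{and} has $\sh^*(P) \neq 0$, but no such $P$ can exist. If $P$ is a sublevel set of a flexible $W$, the Viterbo transfer map gives a unital ring map $\sh(W) \to \sh(P)$, and since $\sh(W) = 0$ for flexible $W$, the unit of $\sh(P)$ is zero, forcing $\sh(P) = 0$ (this is exactly Corollary \ref{coro:transfer} in the paper: every subflexible domain has vanishing symplectic cohomology). The same functoriality kills the wrapped Fukaya category and, more generally, any holomorphic-curve invariant admitting a restriction map to Liouville subdomains. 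So the contradiction you hope to derive in step (3) can never arise, no matter how cleverly $P$ is chosen; ordinary $\sh$ is structurally incapable of detecting nonflexibility of a sublevel set of a flexible manifold.

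The paper's way around this is the essential missing idea: it uses \emph{twisted} symplectic cohomology $\sh_{\Omega_0}$ with a closed two-form $\Omega_0$ on the sublevel set $W_0$ that does \emph{not} extend to the ambient flexible manifold as a closed two-form. Twisted $\sh$ still vanishes for flexible domains (Theorem \ref{thm:vanishing SH}) and still has a transfer map (Theorem \ref{thm:transfer}), but the transfer map only applies when the twisting form on $W_0$ is restricted from the ambient manifold --- so the vanishing does not propagate down. Concretely, the nonflexible sublevel set is produced by the subflexibilization construction $\sbf(U^6)$ applied to Abouzaid--Seidel's exotic affine space $U^6$ with $\sh(U^6) \neq 0$: replacing each vanishing cycle $V_i$ of a Lefschetz presentation by $\tau_{S_i}^2 V_i$ after stabilizing the fiber yields a domain that becomes flexible after attaching further critical handles (Theorem \ref{subflexthm}), yet has $\sh_\Omega(\sbf(U^6)) \cong \sh(U^6) \neq 0$ for a suitable non-extendable $\Omega$ (Theorem \ref{thm:nontrivial twSH}). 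I will add that your worry about ``universality'' of $P$ is resolved more cheaply than you anticipate: one only needs the case $X = B^{2n}$ of Theorem \ref{thm:advertising}, since the resulting flexible domain is deformation equivalent to the ball by the h-principle, and every Weinstein manifold can be homotoped to contain a standard ball as a small sublevel set. Your general outline --- realize a nonflexible model as an intermediate sublevel stage, then obstruct flexibility by a holomorphic-curve invariant --- does match the paper's architecture, but without replacing the invariant by a twisted (or bulk-deformed) one the argument cannot be completed.
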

Such sublevel sets, which we call {\em subflexible}, lie close to the interface between flexibility and rigidity.
We construct a large class of examples,
including many which are sublevel sets of the standard Weinstein structure on $\C^n$ up to Weinstein homotopy. In particular this gives a negative answer to all of the above questions and disproves the conjecture of Cieliebak--Eliashberg.

Our starting observation is that the exotic $6$-dimensional Weinstein manifold $X$ first defined by Maydanskiy in \cite{maydanskiy2009} becomes flexible after attaching an additional critical Weinstein handle (see $\S\ref{subsec:Maydanskiy}$). 
This gives another viewpoint on Maydanskiy's result that $X$ has trivial wrapped Fukaya category (see $\S\ref{sec: holomorphic curve invariants}$ below, along with \cite[Proposition 2.3]{abouzaid2010altering} for the connection with wrapping).
On the other hand, Harris \cite{harris2012distinguishing} observed that $X$ contains a Lagrangian $S^3$ after arbitrarily small non-exact deformations of the symplectic form.
Applying Eliashberg--Murphy's h-principle for embeddings of flexible Weinstein manifolds, we can summarize the discussion so far as:
\begin{thm}\label{thm:X in C3}
In the standard symplectic $\C^3$, there is a Liouville subdomain $X$ and a $C^\infty$-small closed form $\eta \in \Omega^2X$, so that the symplectic manifold $(X, \omega_\std|_X + \eta)$ contains a Lagrangian $S^3$.
\end{thm}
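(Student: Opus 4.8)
The plan is to assemble three facts available from the discussion above and the cited literature: the flexibilization of Maydanskiy's manifold by a single extra critical handle (proved in $\S\ref{subsec:Maydanskiy}$), the Eliashberg--Murphy h-principle for symplectic embeddings of flexible Weinstein manifolds (\cite{eliashberg2013lagrangian}; cf.\ Theorem \ref{thm:subflex embed} below), and Harris's observation that Maydanskiy's manifold acquires a Lagrangian $S^3$ after an arbitrarily $C^\infty$-small non-exact deformation of its symplectic form (\cite{harris2012distinguishing}).

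Let $X$ be Maydanskiy's Weinstein $6$-domain of \cite{maydanskiy2009}, and let $\wh X = X \cup h^3$ be the flexible Weinstein domain obtained from it by attaching a single critical Weinstein handle as in $\S\ref{subsec:Maydanskiy}$, so that $X$ sits inside $\wh X$ as a Liouville subdomain. The first step is to embed $\wh X$ into standard $\C^3$ as a Liouville subdomain. By \cite{eliashberg2013lagrangian} (see Theorem \ref{thm:subflex embed}) it is enough to exhibit a formal symplectic embedding $\wh X \hookrightarrow (\C^3, \omega_\std)$, i.e.\ a smooth embedding together with a homotopy, over $\wh X$, from the Weinstein complex structure on $T\wh X$ to the restriction of the standard one. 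The smooth part follows from general position, since $\wh X$ is homotopy equivalent to a $3$-dimensional CW complex and hence embeds in $\R^6$ with trivial normal bundle; the tangential part follows from the explicit description of $\wh X$ in $\S\ref{subsec:Maydanskiy}$, the point being that complex rank-$3$ bundles over a $3$-complex are classified by $c_1$ and that $H^2(\wh X;\Z) = 0$, so the Weinstein almost complex structure is homotopic to the standard one. With the formal solution in hand, the h-principle upgrades it to a genuine Liouville embedding $\wh X \hookrightarrow (\C^3, \omega_\std)$; equivalently, after a Weinstein homotopy $\wh X$, and therefore $X \subset \wh X$, is a sublevel set of the standard Weinstein structure on $\C^3$.

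It remains only to invoke Harris. Since $X \subset \wh X \subset (\C^3, \omega_\std)$ is a Liouville subdomain, the abstract symplectic manifold $(X, \omega_\std|_X)$ is precisely Maydanskiy's manifold, so \cite{harris2012distinguishing} supplies, for every $\e > 0$, a closed non-exact form $\eta \in \Omega^2(X)$ with $C^\infty$-norm less than $\e$ such that $(X, \omega_\std|_X + \eta)$ contains an embedded Lagrangian $S^3$. This is the statement of the theorem.

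The main obstacle is the embedding step, and within it the verification that the h-principle of \cite{eliashberg2013lagrangian} genuinely applies: one must describe the smooth topology of $\wh X$ --- above all its stable tangential complex structure --- precisely enough to produce the formal embedding into $\C^3$. It is essential that the flexible object is $\wh X$ rather than $X$: Maydanskiy's $X$ is rigid, so the flexible embedding h-principle is not available for it directly, and the sole role of the extra handle is to enter the range where it applies. Everything after the embedding is a matter of quoting $\S\ref{subsec:Maydanskiy}$ and \cite{harris2012distinguishing}.
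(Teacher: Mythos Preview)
Your proposal is correct and follows exactly the approach the paper sketches in the introduction: flexibilize Maydanskiy's $X$ by a single critical handle (\S\ref{subsec:Maydanskiy}), embed the resulting flexible domain into $\C^3$ via the Eliashberg--Murphy h-principle (Theorem \ref{thm:subflex embed}), and then quote Harris \cite{harris2012distinguishing}. The paper treats Theorem \ref{thm:X in C3} as a summary of the preceding paragraph rather than giving a separate proof, so your write-up supplies detail the paper omits --- in particular the verification of the formal hypotheses of Theorem \ref{thm:subflex embed}, which is correct since $\wh X$ is diffeomorphic to $D^*S^3$ (hence embeds smoothly in $\R^6$ and has $H^2(\wh X;\Z)=0$, forcing the almost complex structure to be standard).
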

In fact, Harris' observation suggests that $X$ might be nonflexible, although deducing this directly from the above theorem appears tricky. The reason is that after deformation $X$ becomes non-exact at infinity, and presently there is not a robust theory of holomorphic curve invariants for such manifolds. 

Inspired by the above example and the ``homologous recombination" construction of Abouzaid--Seidel \cite{abouzaid2010altering}, we introduce a general ``subflexibilization" construction for Weinstein manifolds.
Subflexibilization inputs a Weinstein manifold $W$ and outputs a Weinstein manifold $\sbf(W)$,
and applying the construction to $T^*S^3$ gives Maydanskiy's manifold.
Topologically this procedure has the effect of adding some subcritical Weinstein handles. Symplectically it renders $W$ subflexible, in particular killing its symplectic cohomology, and yet the symplectic geometry of $W$ is not completely forgotten as we will explain.
In fact, Harris' observation can be understood as a manifestation of Seidel's result \cite{seidel1997floer} that squares of two-dimensional Dehn twists are {\em fragile},
i.e. are not symplectically isotopic to the identity, but become so 
after small non-exact deformations of the symplectic form.
Correspondingly, for $6$-dimensional $W$ we identify the symplectomorphism type of $\sbf(W)$ after a small non-exact deformation as 
 the boundary connect sum of $W$ with some simple standard (non-exact) symplectic manifold (see Theorem \ref{deformationthm}).

Detecting nonflexibility among subflexible manifolds is a rather subtle problem. In general the basic tool for detecting nonflexibility of a Weinstein manifold is symplectic cohomology.
Indeed, the work of Bourgeois--Ekholm--Eliashberg \cite{bourgeois2012effect} implies that 
the symplectic cohomology of a flexible Weinstein manifold must vanish. We give an alternative proof of this in $\S \ref{sec: holomorphic curve invariants}$ which may be of independent interest. 
However, a standard argument involving the Viterbo transfer map implies that subflexible Weinstein manifolds must have vanishing symplectic cohomology as well, so some other invariant is needed.
For this we turn to a twisted variation of symplectic cohomology.
Twisted symplectic cohomology also vanishes for {\em flexible} Weinstein manifolds, but, perhaps surprisingly at first glance, {\em subflexible} manifolds can have nontrivial twisted symplectic cohomology. 
In fact, the results in \cite{Siegel} of the second author imply that, for $6$-dimensional $W$, twisted symplectic cohomology of $\sbf(W)$ coincides with standard symplectic cohomology of $W$. 
Appealing to this computation, we deduce that {\em$\sbf(W)$ is subflexible, yet nonflexible whenever $W$ has nonvanishing symplectic cohomology}.

\begin{remark}
In fact, \cite{Siegel} also computes {\em bulked deformed symplectic cohomology} for an analogous class of examples, and this can be used to distinguish exotic examples for which ordinary or twisted symplectic cohomology cannot.
However, for simplicity we focus on twisted symplectic cohomology in this paper, especially since this suffices to prove Theorem \ref{thm:mainthmintro}.
\end{remark}

Applying the construction with a bit more care and incorporating other recent constructions of exotic Weinstein manifolds, we prove the following.
Recall that an {\em almost symplectomorphism} is a diffeomorphism which furthermore preserves the homotopy class of the symplectic form as a nondegenerate two-form. 
\begin{thm} \label{thm:advertising}
Let $X$ be any Weinstein domain with $\dim X \geq 6$ and $c_1(X) = 0$.
Then there is a Weinstein domain $X'$ such that
\begin{itemize}
\item
$X'$ is almost symplectomorphic to the boundary connect sum of $X$ with some standard subcritical Weinstein domain
\item $X'$ is a sublevel set of a flexible Weinstein domain almost symplectomorphic to $X$
\item $X'$ has nonvanishing twisted symplectic cohomology.
\end{itemize}
In particular, $X'$ is subflexible but not flexible.
\end{thm}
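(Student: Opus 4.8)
The plan is to take $X'$ to be the subflexibilization $\sbf(W)$ of an auxiliary Weinstein domain $W$, of the same dimension as $X$, chosen so that $W$ is almost symplectomorphic to $X$, has $c_1(W) = 0$, and has $\sh(W) \neq 0$. Granting such a $W$, all three bullets follow from the basic properties of the subflexibilization construction together with routine handle bookkeeping. Topologically $\sbf(W)$ is obtained from $W$ by attaching a standard collection of subcritical Weinstein handles, so $\sbf(W)$ is almost symplectomorphic to $W \bcs Q$ for a standard subcritical Weinstein domain $Q$; since $W$ is almost symplectomorphic to $X$, this gives the first bullet. By construction $\sbf(W)$ is a sublevel set of the Weinstein domain $\sbf(W) \cup h$ obtained by attaching one further critical Weinstein handle $h$ (when $W = T^*S^3$, $\sbf(W)$ is Maydanskiy's manifold and $h$ is the handle flexibilizing it), and this enlarged domain is flexible and almost symplectomorphic to $W$ --- that is, it is the flexibilization of $W$. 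As $W$ is almost symplectomorphic to $X$, this exhibits $X'$ as a sublevel set of the flexibilization of $X$, giving the second bullet. Finally, by the computation of \cite{Siegel} the twisted symplectic cohomology of $\sbf(W)$ is isomorphic to the ordinary symplectic cohomology of $W$, which is nonzero by hypothesis; this gives the third bullet. The ``in particular'' follows at once: the second bullet exhibits $X'$ as subflexible, while $X'$ cannot be flexible, since flexible Weinstein domains have vanishing twisted symplectic cohomology.

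It remains to produce such a $W$. If $\sh(X) \neq 0$ already, take $W = X$. Otherwise --- and this is where recent constructions of exotic Weinstein manifolds enter --- set $W := X \bcs B$, where $B$ is an exotic Weinstein ball of the same dimension $2n = \dim X$ satisfying $\sh(B) \neq 0$; such $B$ exist for every $n \geq 3$ by the work of McLean and its refinements. Since $B$ is diffeomorphic to $D^{2n}$ it carries an essentially unique almost symplectic structure, so $W$ is almost symplectomorphic to $X \bcs D^{2n} = X$, and $c_1(W) = 0$. Moreover $B$ is a Liouville subdomain of $W$, and the Viterbo transfer map $\sh(W) \to \sh(B)$ is a unital ring homomorphism, so $\sh(B) \neq 0$ forces $\sh(W) \neq 0$.

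The heart of the matter, and the source of the main difficulty, is that for such a $W$ two opposing demands must be met simultaneously. On the one hand $\sh(W) \neq 0$ forces $\sbf(W)$ to carry critical handles attached along non-loose Legendrians --- namely the ones obtained, after the modifications made by subflexibilization, from the critical handles of $W$ --- so $\sbf(W)$ is visibly far from subcritical. On the other hand $\sbf(W) \cup h$ must be \emph{genuinely} flexible, which requires that attaching the single handle $h$ simultaneously loosen all of these Legendrians. That this is possible, and that the twisted symplectic cohomology nevertheless still records $\sh(W)$, are precisely the outputs of the subflexibilization construction and of the computation of \cite{Siegel}; verifying that these hold in the generality required here --- for an arbitrary Weinstein domain $W$ with $\sh(W) \neq 0$, in any dimension $2n \geq 6$ --- is the principal obstacle. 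Everything else --- the homotopy-class identities for the almost symplectic types, the handle bookkeeping, and the vanishing of $c_1$ --- is essentially routine.
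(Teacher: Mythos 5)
Your overall strategy --- subflexibilize something almost symplectomorphic to $X$ with nonvanishing $\sh$, and use the transfer map plus Theorem \ref{thm:vanishing SH} for the ``in particular'' --- is in the right spirit, but it has a genuine gap that you yourself flag and then defer: you apply the subflexibilization machinery to an arbitrary $W^{2n}$ in every dimension $2n \geq 6$, whereas the paper only ever establishes its key outputs for $n=2$. Concretely: (i) the computation $\sh_\Omega(\sbf(W)) \cong \sh(W)$ from \cite{Siegel} (Theorem \ref{thm:nontrivial twSH}) is stated only for six-dimensional $W$; (ii) the identification of $\sbf(W)$ with $W$ boundary-connect-summed with a standard subcritical domain, up to almost symplectomorphism, is \emph{false} for $2n>6$ --- by Remark \ref{rem:almost symp type}, squared Dehn twists around odd-dimensional spheres act nontrivially on homology, so even the diffeomorphism type of $\sbf(W^{2n})$ differs from that of $W \bcs Q$; and correspondingly (iii) the domain $\wt{W}$ obtained by attaching handles to $\sbf(W)$ is flexible in all dimensions (Theorem \ref{subflexthm}), but it is identified with $\op{Flex}(W)$, hence almost symplectomorphic to $X$, only when $n=2$. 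So in dimensions above six all three bullets fail for your candidate $X' = \sbf(W)$. There is also a secondary issue even in dimension six: Construction \ref{mainconstruction} requires a Lefschetz presentation satisfying Assumption \ref{mainassumption} (and the \cite{Siegel} computation requires matching type), which the paper guarantees only for affine varieties; you would need to verify this for $W = X \bcs B$ with $X$ arbitrary, which is not addressed.

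The paper sidesteps all of this by never subflexibilizing anything other than the fixed six-dimensional affine variety $U^6$ of Abouzaid--Seidel (diffeomorphic to $\C^3$, with $\sh(U) \neq 0$ and a matching-type Lefschetz presentation). It sets $X' := \flex(X) \bcs \sbf(U^6)$ when $\dim X = 6$ and $X' := \flex(X) \bcs (\sbf(U^6) \times D^*S^{2n-6})$ when $\dim X = 2n > 6$, then assembles the three bullets from: subflexibility of products and boundary connect sums (Lemmas \ref{lem:subflexprod} and \ref{lem:subflexsum}, resting on Proposition \ref{prop:products are flexible}); the K\"unneth theorem for twisted symplectic cohomology (Theorem \ref{thm:kunneth}) together with $\sh(D^*S^{2n-6}) \neq 0$; and invariance of twisted $\sh$ under subcritical handle attachment plus vanishing for $\flex(X)$ to discard that summand. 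If you want to salvage your route, you would need to either prove the higher-dimensional analogues of Theorems \ref{subflexthm} (the $\flex$ identification) and \ref{thm:nontrivial twSH} --- which is precisely the ``principal obstacle'' you name without resolving --- or adopt the paper's product-and-connect-sum reduction to the six-dimensional case.
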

Taking $X$ to be the standard Weinstein ball, we get a nonflexible sublevel set $X'$ of a flexible Weinstein domain which is almost symplectomorphic to $X$, and hence Weinstein deformation equivalent to $X$ by the h principle for flexible Weinstein domains (see \S\ref{subsec:looseness and flexibility}). Since every Weinstein manifold can be homotoped so that it contains the ball as a small sublevel set, this establishes Theorem \ref{thm:mainthmintro}.

Incorporating the techniques of Cieliebak--Eliashberg \cite{cieliebak2013topology},
we also prove:
\begin{thm}\label{thm:pc domains}
If $X$ admits a smooth codimension $0$ embedding into $\C^n$ and $H_n(X;\Z) = 0$ and $H_{n-1}(X;\Z)$ is torsion-free, then $X'$ is a sublevel set of $\C^n$, equipped with the standard Weinstein structure up to deformation. In particular, $X'$ is Weinstein deformation equivalent to a polynomially convex
domain in $\C^n$.
\end{thm}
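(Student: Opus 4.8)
The plan is to reduce the statement to Cieliebak--Eliashberg's characterization of polynomially convex domains \cite{cieliebak2013topology}, applied not to $X'$ itself --- which is not flexible, so their criterion says nothing about it directly --- but to the flexible Weinstein domain in which $X'$ sits as a sublevel set.

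First I would record the relevant part of Theorem \ref{thm:advertising}: $X'$ is a regular sublevel set of a flexible Weinstein domain $Y$ that is almost symplectomorphic, hence diffeomorphic, to $X$. In particular $c_1(Y)=0$ and $\dim Y=\dim X\geq 6$, so $n\geq 3$; and, transporting the hypotheses of the theorem along the diffeomorphism $Y\cong X$, the smooth manifold underlying $Y$ admits a codimension $0$ embedding into $\C^n$ with $H_n(Y;\Z)=0$ and $H_{n-1}(Y;\Z)$ torsion-free. These are exactly the conditions under which \cite{cieliebak2013topology} shows a flexible Weinstein domain to be Weinstein deformation equivalent to a polynomially convex domain; equivalently, after a Weinstein homotopy $Y$ is a regular sublevel set $\{\rho\leq c\}$ of a plurisubharmonic exhaustion $\rho$ of $\C^n$, and the Weinstein structure associated to $\rho$ is deformation equivalent to the standard one. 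I would invoke this to put $Y$ in such a form.

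It then remains to pass from $Y$ to its sublevel set $X'$. On the face of it this is a Weinstein-homotopy bookkeeping argument: one arranges the deformation equivalence $Y\simeq\{\rho\leq c\}$ so that it carries a Weinstein Morse function for $Y$ having $X'$ as a regular sublevel set onto $\rho|_Y$, and then follows the corresponding family of sub-sublevel sets through regular values to identify $X'$, up to Weinstein deformation, with $\{\rho\leq c'\}\subseteq\C^n$ for some $c'<c$. Since $\{\rho\leq c'\}$ is a sublevel set of the plurisubharmonic exhaustion $\rho$ of $\C^n$, this is exactly the asserted form, and it is a polynomially convex domain, which gives the ``in particular'' as well. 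The subtlety --- and the reason one must incorporate the techniques of \cite{cieliebak2013topology} rather than merely quote its conclusion --- is that $X'$ is not flexible, so the handles of $Y$ lying above $X'$ have to be flexibilized and realized inside $\C^n$ relative to the rigid attaching data of $X'$, keeping $X'$ a sublevel set throughout. Carrying this out --- by running the Cieliebak--Eliashberg construction of the plurisubharmonic exhaustion on a handle decomposition of $Y$ that builds $X'$ first and the Weinstein cobordism $Y\setminus\Int X'$ afterwards, and checking that the obstructions to completing the resulting relative embedding into $\C^n$ vanish --- is the step I expect to be the main obstacle.
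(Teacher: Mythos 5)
Your route is essentially the paper's: reduce to $\flex(X)$, use the homological hypotheses to realize it inside $\C^n$ via Cieliebak--Eliashberg, identify the ambient structure with the standard one by flexibility, and then quote their polynomial convexity criterion. The one substantive difference is in the ordering, and it matters because the ``main obstacle'' you flag at the end is an artifact of your ordering and does not arise in the paper's. The paper never touches the handles of $Y=\flex(X)$ lying over $X'$, and no relative flexibilization is needed: one first uses \cite[Lemma 2.1]{cieliebak2013topology} to extend the Morse function of $\flex(X)$ to a Morse function on $\C^n$ with indices $\leq n$, then \cite[Theorem 13.1]{cieliebak2012stein} to extend the Weinstein structure of $\flex(X)$ over the \emph{exterior} handles to a flexible Weinstein structure on all of $\C^n$; since this ambient structure is flexible and almost symplectomorphic to the standard one, the h-principle makes it Weinstein homotopic to the standard structure. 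The sublevel set $X'$ is untouched by the extension and simply rides along, so it is a sublevel set of a Weinstein structure on $\C^n$ that is standard up to deformation --- which is exactly the assertion --- and Theorem 1.5(c) of \cite{cieliebak2013topology} converts this into a Weinstein deformation equivalence of $X'$ onto an honest polynomially convex domain. In short: extend outward from $Y$ rather than trying to rebuild $\C^n$ starting from the nonflexible core $X'$, and the relative problem you anticipated disappears; with that reordering your argument closes up and coincides with the paper's.
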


This paper is organized as follows. In $\S \ref{sec:background}$ we review the relevant 
background on Weinstein and Lefschetz structures and especially the interplay between the two. 
In $\S \ref{sec: holomorphic curve invariants}$ we discuss symplectic cohomology and its twisted cousin and prove that these invariants vanish for flexible Weinstein structures.
In $\S \ref{sec:subflexibilization}$ we introduce the subflexibilization construction and establish its main properties, using the groundwork laid in $\S \ref{sec:background}$. Finally, in $\S \ref{sec:applications}$ we combine all of our results to produce some exotic subflexible manifolds as in Theorem \ref{thm:advertising}.
\section*{Acknowledgements}
{
We wish to thank Yasha Eliashberg for inspiring the ideas in this paper and countless enlightening conversations. We also thank Oleg Lazarev for a careful reading and various helpful comments on an earlier draft of this paper, Mark McLean for discussions concerning vanishing symplectic cohomology of flexible Weinstein manifolds, and Aleksander Doan for sharing his work on symplectic cohomology 
in the non-convex setting and many stimulating discussions thereof.
}
      
\section{Background}\label{sec:background}

\subsection{The Geometry of Weinstein structures}

We begin with a quick review of open symplectic structures, refering the reader to \cite{cieliebak2012stein} for more details.
Recall that a \emph{Liouville domain} is a pair $(W^{2n+2},\la)$, where 
\begin{itemize}
\item
$W$ is a compact $(2n+2)$-dimensional manifold with boundary
\item 
$\la$ is a $1$-form on $W$ such that $\omega := d\la$ is symplectic
\item 
the Liouville vector field $Z_\la$, defined by $Z_\la \ip d\la = \la$, is outwardly transverse to $\bdy W$.
\end{itemize}
There is also a weaker notion of a {\em compact symplectic manifold with convex boundary} $(W,\omega)$, in which $\omega$ has a primitive $\la$ defined only near $\bdy W$ such that $Z_\la$ is outwardly transverse to $\bdy W$.

A \emph{Weinstein domain} is a triple $(W, \lambda, \p)$, where
\begin{itemize}
\item
$(W,\la)$ is a Liouville domain
\item 
$\phi: W \rightarrow \R$ is a Morse function with maximal level set $\bdy W$
\item 
$Z_\la$ is gradient-like for $\phi$.
\end{itemize}
A Weinstein domain has a \emph{completion} $\wh W = W \cup \left([0,\infty) \x \dd W\right)$.
Let $r$ denote the unique flow coordinate for $Z_\la$ on $\Op(\bdy W)$ satisfying 
and $\calL_{Z_\la}(r) \equiv 1$ and $r|_{\bdy W} \equiv 0$.
 Assuming $\p \equiv r$ on $\Op(\bdy W)$, we extend $\lambda$ to $[0,\infty) \x \dd W$ as $\wh \lambda = e^r\lambda|_{\dd W}$ and we extend $\p$ as $\wh \p(r, x) = r$. Here $\wh W$ is an open manifold without boundary, $\wh \p: \wh W \to [0,\infty)$ is a proper Morse function, and the flow of the vector field $Z_{\wh\lambda}$ is complete. The completion $(\wh W, \wh\lambda, \wh \p)$ is called a \emph{Weinstein manifold}. In this paper we require Weinstein manifolds to be \emph{finite type}, meaning the proper Morse function has only finitely many critical points. Any finite type Weinstein manifold is the completion of a Weinstein domain.

For two Weinstein domains $(W,\la_1,\phi_1),(W,\la_2,\phi_2)$ with the same underlying smooth manifold,
the natural equivalence relation is {\em Weinstein homotopy}, i.e. a $1$-parameter family of Weinstein structures $(W,\la_t,\phi_t)$ connecting them, $t \in [1,2]$, where $\phi_t$ is additionally allowed to have standard birth-death singularities at finitely many times.
For Weinstein manifolds, the definition is similar, but with the added stipulation that the union of the critical points of $\phi_t$ for all $t$ be contained in some compact subset (this is to prevent 
critical points from disappearing off to infinity).
Two Weinstein structures\footnote{Here {\em structure} means either domain or manifold.} on a priori different smooth manifolds are \emph{deformation equivalent} if, after pulling back one structure by a diffeomorphism, the two resulting structures on the same smooth manifold are Weinstein homotopic.
For Weinstein manifolds, Weinstein homotopy (and hence deformation equivalence) implies exact symplectomorphism.
Note however that two homotopic Weinstein {\em domains} need not be symplectomorphic, since for example their volumes or symplectic capacities could be different. For this reason, regarding questions of symplectomorphisms it is more natural to work with completions.  

The definition of a Weinstein domain implies that $\lambda$ is a contact form when restricted to $Y^c := \p^{-1}(c)$ for any regular value $c$. Furthermore, the descending manifold $D^k_p$ of any critical point $p \in W$ satisfies $\lambda|_{D^k_p} = 0$. Therefore $D^k_p$ is isotropic in the symplectic sense for $d\lambda$, and $\Lambda^c_p := D^k_p \cap Y^c$ is isotropic in the contact sense for $\ker \lambda|_{Y^c}$. In particular $k := \op{ind}(p) \leq n+1$. 

If $c \in \R$ is a critical value of $\p$ with a unique critical point $p$, then the Weinstein homotopy type of $W^{c+\e} = \{\phi < c + \e\}$ (for $\e>0$ sufficiently small) is determined by $W^{c-\e}$, together with the isotopy type\footnote{$\Lambda^{c-\e}_p$ is canonically parametrized as $\bdy D^{k+1}_p$, and its isotopy type through \emph{parametrized} Legendrians affects the symplectomorphism type of $W^{c+\e}$. We will often suppress this parametrization from the notation.}  of $\Lambda_p^{c-\e} \sse Y^{c-\e}$ and a framing of the symplectic normal bundle of $\La_p^{c-\e}$ (which is necessarily trivial). This is also constructive: given a Weinstein domain $W$ and a (parametrized) isotropic sphere $\La^k \sse  \dd W$, together with a framing of the symplectic normal bundle of $\La$ (assumed to be trivial), we can construct a new Weinstein domain with one additional critical point, of index $k+1$, whose descending manifold intersects $\bdy W$ along precisely $\Lambda$. 
This procedure, called {\em attaching a Weinstein $(k+1)$-handle along $\La$}, depends only on the framed, parametrized isotopy class (through isotropics) of $\Lambda$, up to Weinstein homotopy (see \cite{weinstein1991contact, cieliebak2012stein} for more details).

In particular, for a Legendrian sphere $\La^n \subset \bdy W$, we can attach an $(n+1)$-handle $H \cong \D^{n+1}\times \D^{n+1}$ along $\Lambda$ and extend the Weinstein structure over $H$. The resulting Morse function agrees with $\phi$ on $W$ and has one additional critical point $p_H$ of index $n+1$ in $H$. 
The {\em core} (resp. {\em cocore}) of $H$ are the Lagrangian disks consisting of all points in $H$ which limit to $p_H$ under the positive (resp. negative) Liouville flow.
The boundary of the core (resp. cocore) of $H$ is a Legendrian sphere in $\bdy W$ (resp. $\bdy(W \cup H$)), which we refer to as the \emph{attaching sphere} (resp. \emph{belt sphere}) of $H$.
We denote the attaching sphere and belt sphere of $H$ by $\as(H)$ and $\bs(H)$ respectively.

In general, if a critical point $p$ of $\phi: W^{2n+2} \rightarrow \R$ has $\op{ind}(p) = n+1$, we say that $p$ is \emph{critical}; otherwise it is \emph{subcritical}. 
A Weinstein manifold is called \emph{explicitly subcritical} if all of its critical points are subcritical. Note that explicit subcriticality is not invariant under Weinstein homotopy, since one can easily perform a Weinstein homotopy which creates two canceling critical points, one of index $n$ and one of index $n+1$ (see \cite[\S 12.6]{cieliebak2012stein}).
We will call a Weinstein manifold \emph{subcritical} if it can be made explicitly subcritical by a Weinstein homotopy.

\subsection{Looseness and flexibility}\label{subsec:looseness and flexibility}

\emph{Loose Legendrians} were defined in \cite{murphy2012loose}, where it was shown that they satisfy an h-principle: two loose Legendrians are Legendrian isotopic if and only if they are \emph{formally isotopic}. Intuitively, two Legendrians are formally isotopic if and only if they are smoothly isotopic in such a way that their normal bundle framings are canonically homotopic.
A connected Legendrian of dimension $n > 1$ is loose if it admits a \emph{loose chart}, defined as follows.
Let $\gamma_a$ denote the Legendrian arc in $(B^3_\std,\bdy B^3_\std)$, defined up to Legendrian isotopy, with the properties depicted in the left side of Figure \ref{stabilizedarcandzigzag}. Namely, it has a single Reeb chord, of action $a$, and its front projection has a single transverse self-intersection and a single cusp.
Set
\begin{align*}
V_\rho^{2n-2} &:= \{(q,p) \in T^*\R^{n-1}\;:\; |q| \leq \rho,\; |p| \leq \rho\}\\
Z_\rho^{n-1} &:= \{(q,p) \in V_\rho^{2n-2}\;:\; p = 0\}.
\end{align*}
Here $T^*\R^{n-1}$ is equipped with the Liouville form $-p_1dq_1 - ... - p_{n-1}dq_{n-1}$,
and we equip the product $B^3_\std \times V_\rho$ with the contact form
$z - ydx - p_1dq_1 - ... - p_{n-1}dq_{n-1}$.
A loose chart for a Legendrian $\La \subset N$ is a contact embedding of pairs
\begin{align*}
(B^3_\std \times V_\rho, \gamma_a \times Z_\rho) \hookrightarrow (N,\La)
\end{align*}
such that $\frac{a}{\rho^2} < 2$.
More generally, we say a Legendrian link
is loose if each component admits a loose chart in the complement of the other components. 

\begin{figure}
 \centering
 \includegraphics[scale=.8]{./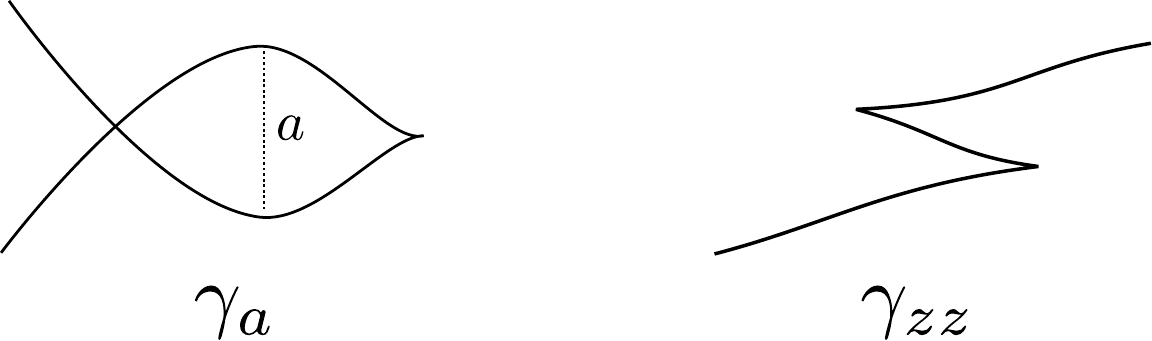}
 \caption{Left: the front picture of the Legendrian arc $\gamma_a$ with a chord of action $a$. Right: the front picture of a once stabilized Legendrian arc.}
 \label{stabilizedarcandzigzag}
\end{figure} 

Since loose Legendrians satisfy an h-principle, one might also expect an h-principle for Weinstein manifolds built by iterative Weinstein handle attachments along loose Legendrian spheres. 
This indeed turns out to be the case and is explored in depth in \cite{cieliebak2012stein}.

\begin{definition}\label{def:flexible}
For $n>1$, we say a Weinstein structure $(W^{2n+2},\la,\phi)$ is {\em explicitly flexible} if there exist regular values $c_0,c_1,...,c_N$ of $\phi$
with $c_0 < \min(\phi) < c_1 < ... < c_N$ such that
\begin{itemize}
\item all critical points of $\phi$ are contained in $\{ \phi < c_N\}$
\item there are no gradient trajectories of $Z_\la$ joining two critical points in $\{ c_i < \phi < c_{i+1}\}$ for $i = 0,...,N-1$ (i.e. $\{c_i \leq \phi \leq c_{i+1}\}$ is an {\em elementary cobordism} in the language of \cite{cieliebak2012stein})
\item the attaching spheres of all index $n+1$ critical points in $\{ c_i  < \phi < c_{i+1}\}$ form a loose Legendrian link in $\phi^{-1}(c_i)$ for $i = 1,...,N-1$.
\end{itemize}
A Weinstein structure is called \emph{flexible} if it is Weinstein homotopic to an explicitly flexible structure.
\end{definition}
\begin{remark}\label{remark:explicitflexcrit}
\hspace{1cm}
\begin{enumerate}
\item
Note that any (explicitly) subcritical Weinstein domain is (explicitly) flexible.
\item 
In a slight change of terminology, our definition of {\em explicitly flexible} coincides with 
the definition of {\em flexible} given in \cite{cieliebak2012stein}. 
\item 
As explained in \cite[Remark 11.30(3)]{cieliebak2012stein}, if $c_0 < c_1 < ... < c_N$ is a partition as in Definition \ref{def:flexible},
so is any finer partition of $(W,\la,\phi)$ into elementary cobordisms. In particular, if the critical points of $\phi$ have pairwise distinct critical values
then $(W,\la,\phi)$ is explicitly flexible if and only if the attaching Legendrian of each index $n+1$ critical point $p$ is loose in $\phi^{-1}(\phi(p)-\e)$ for $\e>0$ sufficiently small.
If some of the critical points of $\phi$ share the same critical value $q$, a similar criterion holds if we consider all the corresponding attaching spheres as a Legendrian link in $\phi^{-1}(q-\e)$.
\end{enumerate}
\end{remark}

To justify the name, Cieliebak--Eliashberg prove the following flexibility results (see \cite{cieliebak2012stein} for stronger and more precise statements):

\begin{thm}\label{thm:CE flex}
\hspace{1cm}
\begin{enumerate}
\item
Given an explicitly flexible Weinstein structure $(W^{2n+2},\la,\phi)$ and any Morse function $\wt{\phi}: W \rightarrow \R$ without critical points of index greater than $n+1$, there is a Weinstein homotopy starting at $(W,\la,\phi)$ and ending at a Weinstein structure whose Morse function is $\wt{\phi}$.
\item 
Two explicitly flexible Weinstein structures are Weinstein homotopic if and only if their symplectic forms are homotopic as non-degenerate two-forms.
\item
Any Weinstein structure $(W,\la,\phi)$ is {\em almost symplectomorphic} to a flexible Weinstein structure, i.e. there is a diffeomorphism respecting the homotopy classes of the symplectic forms as nondegenerate two-forms.
We denote this flexible version by \emph{$\flex(W,\la,\phi)$}. Note that it is well-defined up to Weinstein deformation equivalence.
\end{enumerate}
\end{thm}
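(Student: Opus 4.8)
The plan is to recast all three statements as consequences of a single h-principle: among flexible Weinstein structures, the formal data — the underlying smooth manifold, the homotopy class of the symplectic form as a nondegenerate two-form, and the formal Legendrian isotopy classes of the attaching spheres this induces — is a complete invariant up to Weinstein homotopy. The engine is Murphy's h-principle for loose Legendrians recalled in \S\ref{subsec:looseness and flexibility}, together with the Gromov--Eliashberg h-principle for subcritical isotropics; both are applied one handle at a time along a handle decomposition. Smooth Morse theory supplies the elementary moves, and the work is to check that each smooth move lifts to a Weinstein homotopy that preserves looseness of the collection of attaching spheres.

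For part (1), standard Morse theory gives a generic path of Morse functions from $\phi$ to $\wt\phi$ built from elementary moves — birth/death of a canceling pair of adjacent-index critical points and handle slides — none of which raises the index above $n+1$. Each must be realized Weinsteinly. The birth of a canceling (subcritical, critical) pair is a standard Weinstein homotopy, and the new index-$(n+1)$ handle is automatically attached along a loose sphere, since it sits as a stabilized unknot in a Darboux chart near the new subcritical belt sphere; flexibility is thus preserved. For a death, the smooth data says the $(n+1)$-handle's attaching sphere is formally in cancellation position with a subcritical belt sphere; as it is loose, Murphy's h-principle upgrades this formal isotopy to a genuine Legendrian isotopy, after which Weinstein handle cancellation applies. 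A handle slide likewise amounts to a prescribed Legendrian isotopy of an attaching sphere through the belt sphere of another handle, again realized because the sphere is loose. Since the attaching spheres stay loose throughout, the terminal structure is again explicitly flexible, with Morse function $\wt\phi$.

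For part (2), the ``only if'' direction is immediate, as the homotopy class of $\omega=d\la$ as a nondegenerate two-form is manifestly invariant under Weinstein homotopy (including birth--death moves). For ``if'', given two explicitly flexible structures with homotopic symplectic forms, first use part (1) to arrange that they have the same Morse function, hence literally the same smooth handle decomposition; match subcritical handles via the Gromov--Eliashberg h-principle. For each index-$(n+1)$ handle the two attaching Legendrians are both loose, and an obstruction-theoretic argument — the only obstruction to a formal Legendrian isotopy lies in homotopy groups of spaces of isotropic frames, which are identified by the chosen homotopy of symplectic forms — shows they are formally isotopic; Murphy's h-principle then makes them genuinely isotopic, and concatenating handle by handle yields the desired Weinstein homotopy. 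Part (3) is the dual construction: fix a handle decomposition of $W$ and reattach the handles in the same order on a diffeomorphic manifold, but attaching each index-$(n+1)$ handle along a \emph{loose} Legendrian representative of the appropriate formal class (and each subcritical handle along a subcritical isotropic sphere). Such loose representatives are furnished by the existence part of Murphy's h-principle, after — if necessary — first stabilizing the decomposition with canceling handle pairs so that the available formal Legendrian classes realize the prescribed formal invariants. The result is explicitly flexible by construction, almost symplectomorphic to $W$ since it is built on a diffeomorphic smooth manifold from the same smooth handles with the prescribed homotopy class of $\omega$, and well-defined up to Weinstein deformation equivalence by part (2).

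The main obstacle is the second ingredient in each part: the bookkeeping of formal Legendrian isotopy classes across the whole handle decomposition and feeding them correctly into Murphy's uniqueness h-principle. One must verify that homotoping the symplectic forms (part 2), or prescribing the almost-symplectic data and reattaching handles (part 3), really does pin down each attaching sphere up to formal Legendrian isotopy — an obstruction computation over the relevant bundles of Lagrangian/isotropic frames — so that looseness can then be invoked. By comparison, the purely symplectic handle moves in part (1) are routine once the Legendrian h-principle is available.
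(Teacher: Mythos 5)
You should first be aware that the paper does not prove Theorem \ref{thm:CE flex} at all: it is quoted from Cieliebak--Eliashberg \cite{cieliebak2012stein}, so there is no in-paper argument to compare against. Your sketch does reproduce the broad strategy of their proof (handle-by-handle reduction, Gromov's h-principle for the subcritical skeleton, Murphy's h-principle for the critical attaching spheres), but it contains a concrete error and defers exactly the steps that make the theorem hard.

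The error is in part (1), where you assert that the critical handle of a freshly created canceling pair ``is automatically attached along a loose sphere'' because it ``sits as a stabilized unknot in a Darboux chart.'' The standard birth model produces the \emph{unstabilized} Legendrian unknot linking the belt sphere of the new index-$n$ handle, and that unknot is not loose: attaching a critical handle to the ball along it yields $D^*S^{n+1}$, which would then be flexible and have vanishing symplectic cohomology by Theorem \ref{thm:vanishing SH}, contradicting the nonvanishing of $\sh(D^*S^{k})$ used in \S 5. Cieliebak--Eliashberg must instead deliberately create the pair with a stabilized (hence loose) attaching sphere in the same formal class, and then prove a separate cancellation lemma — a loose Legendrian that \emph{formally} cancels a subcritical belt sphere can be Legendrian isotoped into genuine cancellation position — to justify deaths; that lemma, not automatic looseness, is one of the two pillars of the argument. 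The second pillar, which your sketch also elides, is the \emph{link} condition in Definition \ref{def:flexible}: explicit flexibility requires the critical attaching spheres within each elementary cobordism to form a loose Legendrian link, each component loose in the complement of the others. When you slide handles, reorder critical values, or merge level sets in parts (1) and (2), you must keep each loose chart disjoint from all other attaching spheres; this is precisely the subtlety the paper emphasizes in the discussion immediately following the theorem, and it is what makes ``subflexible but not flexible'' possible at all. Your closing paragraph correctly names the remaining formal-isotopy bookkeeping in parts (2) and (3) as the main obstacle but does not carry it out, so as written the proposal is a correct outline of the known proof rather than a proof.
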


One of the main goals of this paper is to show that explicit flexibility is \emph{not} preserved under Weinstein homotopies. 
To see that this is conceivably possible, imagine a Weinstein domain $(W,\la,\phi)$ with precisely two critical points of critical index, say $p_1$ and $p_2$,
such that $\phi(p_1) = 1$ and $\phi(p_2) = 2$. Assume there are no gradient trajectories between $p_1$ and $p_2$, and
suppose that $(W,\la,\phi)$ is explicitly flexible, which means that $\as(p_1) \subset \phi^{-1}(1/2)$ and $\as(p_2) \subset \phi^{-1}(3/2)$ are loose Legendrians.
Now suppose we were to homotope $(W,\la,\phi)$ by lowering the value of $p_2$ until $p_1$ and $p_2$ lie on the same level set of $\phi$.
The result would be explicitly flexible if and only if $\as(p_1) \cup \as(p_2)$ is a loose link in $\phi^{-1}(1/2)$.
Since we did not assume $\as(p_2) \subset \phi^{-1}(3/2)$ is loose in the complement of $\bs(p_1)$, there is no obvious reason why $\as(p_2) \subset \phi^{-1}(1/2)$ should be 
loose in the complement of $\as(p_1)$.

Note, however, that Theorem \ref{thm:CE flex} above is tautologically also true for the more general class of flexible Weinstein manifolds.
\begin{definition} \label{def:subflexible}
A Weinstein domain $(W, \lambda, \p)$ is called \emph{subflexible} if it is Weinstein deformation equivalent
 to a sublevel set of a flexible Weinstein manifold. 
\end{definition}
\begin{remark}
A sublevel set of a Weinstein domain is a special case of a \emph{Liouville embedding}, i.e. a smooth codimension zero embedding $i: (W_0,\la_0) \hookrightarrow (W,\la)$ of a Liouville domain into a Liouville manifold such that $i^* \la - e^\rho\la_0$ is exact for some $\rho \in \R$. For Liouville embeddings one can define the Viterbo transfer map on symplectic cohomology \cite{viterbo1999functors}.
\end{remark}

Flexible manifolds are also subflexible by definition, and at first glance it might appear
that these are the only examples.
For instance, any subflexible Weinstein domain has trivial symplectic cohomology (see Proposition \ref{thm:vanishing SH}).
On the other hand, as we explain in $\S \ref{sec: holomorphic curve invariants}$, the proof of Proposition \ref{thm:vanishing SH} fails for a twisted version of symplectic cohomology (even though a version of the transfer map still holds in this setting!). 
In fact, twisted symplectic cohomology is strong enough to detect nonflexibility of the examples we construct in $\S \ref{sec:subflexibilization}$.

\subsection{Lefschetz structures}\label{sec:Lefschetz structures}
Roughly speaking, a smooth map $\pi: E^{2n} \rightarrow B^2$ is a {\em smooth Lefschetz fibration} if $B^2$ is a compact surface with boundary, $E^{2n}$ is a compact manifold with 
corners\footnote{From now on we will not explicitly mention the corners and will assume the corners have been smoothed when needed.}, and $\pi$ is a submersion except at finitely many singular points, near which it is modeled on the holomorphic map 
\begin{align*}
\C^n \rightarrow \C,\;\;\;\;\; (z_1,...,z_n) \mapsto z_1^2 + ... + z_n^2.
\end{align*}
If $E$ has additional structure, we would like $\pi$ to be compatible with this structure in some sense.
Roughly speaking, a {\em symplectic Lefschetz fibration} is a Lefschetz fibration such that the total space is equipped with a symplectic form which restricts to a symplectic form on each fiber, with some additional technical conditions to ensure parallel transport is well-behaved.
We now give an actual definition with the caveat that the precise details are not crucial for our purposes and there seems to be no universally agreed upon definition in the literature.
\begin{definition}
A {\em symplectic Lefschetz fibration} is a smooth map $\pi: W \rightarrow \D^2$, with a $(W,\omega)$ a compact symplectic manifold, such that
\begin{itemize}
\item
{\em Lefschetz type singularities:} $\pi$ is a submersion whose fibers are smooth manifolds with boundary, except at finitely many critical points. The critical points have distinct critical values lying in $\Int \D^2$, and near each critical point $\pi$ is modeled on $\C^n \rightarrow \C$, $(z_1,...,z_n) \mapsto z_1^2 + ... + z_n^2$, with $\omega$ identified with the standard K\"ahler form on $\C^n$. 
\item 
{\em Compatibility with $\omega$:} For any regular value $p \in \D^2$, $(\pi^{-1}(p),\omega|_{\pi^{-1}(p)})$ is a compact symplectic manifold with convex boundary
\item
{\em Triviality near the vertical boundary:} On a neighborhood of $\bdy_vW := \pi^{-1}(\bdy \D^2)$, $\pi$ is equivalent (for some $\e > 0$) to the map 
\begin{align*}
\pi \times \op{Id}: \bdy_v W \times (1-\e,1] \rightarrow S^1 \times (1-\e,1] \subset \D^2
\end{align*}
with $\omega$ identified with $\omega_v + \pi^*\omega_b$,
where $\omega_v$ is the pullback of $\omega|_{\bdy_vW}$ under the projection
$\bdy_vW \times (1-\e,1] \rightarrow \bdy_v W$ and $\omega_b$ is some symplectic two-form on $\D^2$.
\item 
{\em Triviality near the horizontal boundary:} On a neighborhood of $\bdy_hW := \bdy W \setminus \Int\bdy_vW$, $\pi$ is equivalent to the projection $\Op(\bdy M) \times \D^2 \rightarrow \D^2$, with $\omega$ identified with a split symplectic form, where $M := \pi^{-1}(1)$ is the fiber.
\end{itemize}
\end{definition}
The key feature of symplectic Lefschetz fibrations is that the symplectic orthogonals to the vertical tangent spaces define a symplectic connection, meaning any path $\gamma: [0,1] \rightarrow \D^2$ which avoids the critical values of $\pi$ induces a parallel transport symplectomorphism $\pi^{-1}(\gamma(0)) \cong \pi^{-1}(\gamma(1))$.
The symplectic Picard--Lefschetz theorem identifies the holonomy around a critical value of $\pi$ with a symplectic Dehn twist along the corresponding vanishing cycle (see below).
The notion of a {\em Liouville Lefschetz fibration} is similar, with $\omega = d\la$ a Liouville structure on $W$ and $(\pi^{-1}(p),\la|_{\pi^{-1}(p)})$ a Liouville domain for each regular value $p \in \D^2$. We refer the reader to \cite{seidelbook} for a comprehensive treatment.

Let $(W,\omega)$ be a symplectic manifold equipped with a symplectic Lefschetz fibration $\pi: W \rightarrow \D^2$, and let $z_1,...,z_k \in \D^2$ denote the critical values of $\pi$.
A {\em basis of vanishing paths} is a collection of embedded paths $\gamma_1,...,\gamma_k: [0,1] \hookrightarrow \D^2$ such that for $i = 1,...,k$, $\gamma_i(0) = 1 \in \bdy\D^2$, $\gamma_i(1) = z_i$, and $\gamma_i|_{(0,1)}$ is an embedding in $\Int \D^2 \setminus \{z_1,...,z_k\}$.
We associate to a basis of vanishing paths the {\em vanishing cycles} $V_1,...,V_k \subset \pi^{-1}(1)$,  where $V_i$ is defined as the set of all points in $M := \pi^{-1}(1)$ which are parallel transported along $\gamma_i$ to the critical point in $\pi^{-1}(z_i)$.
The vanishing cycles $V_1,...,V_k$ are embedded parametrized\footnote{Just as in the case of Weinstein manifolds and Legendrian attaching maps, vanishing cycles have a canonical parametrization (modulo isotopies) since they are the boundaries of the \emph{Lagrangian thimbles}. Similar to the Weinstein case, this data is necessary to recover the symplectic topology of the total space from the data of the fiber and the vanishing cycles.}
Lagrangian spheres in $(M,\omega|_M)$.

Now suppose that $(V_1,...,V_k)$ are the vanishing cycles associated to a certain basis of vanishing paths, and $(V_1',...,V_k')$ are those associated to some other basis of vanishing paths.
It follows from the symplectic Picard--Lefschetz theorem
that $(V_1,...,V_k)$ and $(V_1',...,V_k')$ differ by a sequence of {\em Hurwitz moves}. 
A {\em Hurwitz move} is one of the following two operations:
\begin{align*}
(V_1,...,V_k) \rightsquigarrow (\tau_{V_1}V_2,V_1,V_3,...,V_k)\\
(V_1,...,V_k) \rightsquigarrow (V_2,\tau_{V_2}^{-1}V_1,V_3,...,V_k).
\end{align*}
(Here and throughout the paper, we denote by $\tau_V$ the symplectic Dehn twist around the Lagrangian sphere $V$.) Note that the two moves are inverses of each other, and as usual the vanishing cycles are only cyclically ordered, so the moves can be applied on any two consecutive vanishing cycles.

In general, we call $(M,\omega;V_1,...,V_k)$ a {\em symplectic Lefschetz datum} if $(M,\omega)$ is a compact symplectic manifold with convex boundary and $V_1,...,V_k \subset M$ is a cyclically ordered collection of embedded parametrized Lagrangian spheres. 
Given a symplectic Lefschetz datum $(M,\omega;V_1,...,V_k)$ and a basis of vanishing paths,
one can construct a compact symplectic manifold $\fS(M,\omega;V_1,...,V_k)$, equipped with a symplectic Lefschetz fibration 
\begin{align*}
\pi_{(M,\omega;V_1,...,V_k)}:\fS(M,\omega;V_1,...,V_k) \rightarrow \D^2.
\end{align*}
Moreover, the fiber is symplectomorphic to $M$ and the vanishing cycles are identified with $V_1,...,V_k$.
Similarly, a Liouville Lefschetz datum is $(M,\la;V_1,...,V_k)$ with $(M,\la)$ a Liouville domain,
and one can construct a Liouville Lefschetz fibration
\begin{align*}
\pi_{(M,\la;V_1,...,V_k)}: \fL(M,\la;V_1,...,V_k) \rightarrow \D^2
\end{align*}
which recovers the initial datum $(M,\la;V_1,...,V_k)$.

The construction of $\pi_{(M,\omega;V_1,...,V_k)}:\fS(M,\omega;V_1,...,V_k) \rightarrow \D^2$ proceeds as follows. 
Of course if $k=0$ we can simply take $M \times \D^2$ with the split symplectic form $\omega + \omega_\std$.
Now suppose $k=1$. For any $s > 0$ there is a ``model" symplectic Lefschetz fibration $\pi: E_s \rightarrow \D^2$ whose fiber is symplectomorphic to $D^*_s S^n$, the disk cotangent bundle of $S^n$ of radius $s$ with respect to some Riemannian metric (see \cite[Example 15.4]{seidelbook}). 
Near $\bdy_hE_s$, $\pi$ is equivalent to the projection $\Op(\bdy D^*_s S^n) \times \D^2 \rightarrow \D^2$. Here $\Op(\bdy D^*_s S^n) \times \D^2$ is equipped with the symplectic form $\omega_\std + \omega_b$, where $\omega_\std$ is the canonical symplectic form on $D^*_s S^n$ and $\omega_b$ is some symplectic form on $\D^2$. 
Now consider $M \times \D^2$ with the split symplectic form $\omega + \omega_b$.
By the Weinstein neighborhood theorem, a neighborhood $U$ of $V_1 \times \D^2$ is symplectomorphic to $(D^*_sS^n \times \D^2,\omega_\std + \omega_b)$ for some $s > 0$. We can therefore remove $U$ from $M \times \D^2$ and symplectically glue in $E_s$, and this gives the desired symplectic Lefschetz fibration.
More generally, if $k \geq 2$ we perform the above construction for each vanishing cycle individually and then glue together the resulting symplectic Lefschetz fibrations along a fiber, reidentifying the base with $\D^2$. 
The Liouville Lefschetz fibration ${\pi_{(M,\la;V_1,...,V_k)}:\fL(M,\la;V_1,...,V_k) \rightarrow \D^2}$ can be constructed in a similar fashion (see \cite[\S 16e]{seidelbook} for more details).

There is also a notion of {\em completion} for a Lefschetz fibration over $\D^2$, resulting in a Lefschetz fibration over $\C$.  If $\pi: W \rightarrow \D^2$ is a symplectic Lefschetz fibration, the completion $\wh{\pi}: \wh{W} \rightarrow \C$ is obtained as follows. First, by triviality near the horizontal boundary we can symplectically glue $(\wh{M} \setminus M) \times \D^2$ to $W$, where $\wh{M}$ denotes the completion of $M = \pi^{-1}(1)$ as a compact symplectic manifold with convex boundary. Denote the resulting symplectic manifold by $(W_1,\omega_1)$, and note that $\pi$ has a natural extension $\pi_1: W_1 \rightarrow \D^2$. 
Now by triviality near the vertical boundary we can identify the restriction of $\pi_1$ to $\Op\left(\pi_1^{-1}(\bdy \D^2)\right)$ with the map
\begin{align*}
\pi \times \op{Id}: \bdy_vW_1\times (1-\e,1] \rightarrow \D^2.
\end{align*}
Here $\bdy_vW_1\times (1-\e,1] $ is equipped with the symplectic form $\wh{\omega_v} + \pi^*\omega_b$ where $\wh{\omega_v} := \omega_1|_{\bdy_v W_1}$.
Let $(\C,\wh{\omega_b})$ denote the completion of $(\D^2,\omega_b)$.
Finally, we complete the horizontal boundary of $W_1$ by gluing in $\bdy_v W_1 \times [1,\infty)$,
equipped with the symplectic form $\wh{\omega_v} + \wh{\omega_b}$.
One can similarly complete a Liouville Lefschetz fibration, and in this case the completion as a Lefschetz fibration agrees with the completion as a Liouville domain, up to Liouville deformation equivalence. We denote the completions of $\fS$ and $\fL$ by $\fShat$ and $\fLhat$ respectively. 

We compile a few basic facts that will be needed later:
\begin{itemize}
\item
The symplectomorphism type of $\fShat(M,\omega;V_1,...,V_k)$ is well-defined and invariant under symplectomorphisms of $M$ and Hamiltonian isotopies and Hurwitz moves of the vanishing cycles.
\item 
$\fL(M,\la;V_1,...,V_k)$ is well-defined up to Liouville deformation equivalence and invariant under Liouville deformation equivalences of $(M,\la)$ and Hamiltonian isotopies and Hurwitz moves of the vanishing cycles.
\item 
$\fLhat(M,\la;V_1,...,V_k)$ is symplectomorphic to $\fShat(M,d\la;V_1,...,V_k)$.
\item
Let $(M,\omega;V_1,...,V_k)$ be a symplectic Lefschetz datum and let $\Omega$ be a closed two-form on $M$ with support disjoint from $V_1 \cup ... \cup V_k$.
Then there is a closed two-form $\wt{\Omega}$ on $\fShat(M,\omega;V_1,...,V_k)$ such that 
$\fShat(M,\omega + s\Omega;V_1,...,V_k)$ is symplectomorphic to the result of adding $s\wt{\Omega}$ to the symplectic form of $\fShat(M,\omega;V_1,...,V_k)$
(provided $(M,\omega + s\Omega)$ is symplectic with convex boundary).
\end{itemize}

As explained in \cite[\S 8.2]{bourgeois2012effect}, the Liouville domain $\fL(M,\la;V_1,...,V_k)$ can also be constructed (up to Liouville deformation equivalence) by attaching Weinstein handles to $M \times \D^2$ along Legendrian lifts of $V_1,...,V_k$. If $(M,\la,\phi)$ is a Weinstein domain, this can be slightly refined to construct a Weinstein domain $\fW(M,\la,\phi;V_1,...,V_k)$, as we now explain.
Let $(M,\la,\phi)$ be a Weinstein domain, and let $(\D^2,\la_\std,\phi_\std)$ be the standard Weinstein structure on the unit disk with $\la_\std = \frac{1}{2}xdy - \frac{1}{2}ydx$ and $\phi_\std(x,y) = x^2 + y^2$. We would like to view the product $(M,\la,\phi) \times (\D^2,\la_\std,\phi_\std)$ as a Weinstein domain in such a way that part of the boundary is identified with $M \times S^1$. 
To accomplish this, Weinstein homotope the completion $(\wh{M},\wh{\la},\wh{\phi})$ to $(\wh{M},\wh{\la},\wt{\phi})$, where $\wt{\phi}: \wh{M} \rightarrow \R$ is $C^\infty$-small on $M$ and is of the form $h(s)$ on $[0,\infty) \times \bdy M$, where $s$ is the coordinate on $[0,\infty)$ and $h'(s) > 0$.
Now consider the Weinstein manifold $(\wh{M}\times \C, \wh{\la}+\frac{1}{2}xdy - \frac{1}{2}ydx, \wt{\phi} + x^2 + y^2)$. Denote the Weinstein domain given by the sublevel set $\{\wt{\phi} + x^2 + y^2 \leq 1\}$ as $(M,\la,\phi) \ttimes (\D^2,\la_\std,\phi_\std)$ (or just $M \ttimes \D^2$ when the rest of the data is implicit).

We can decompose the contact boundary $Y := \bdy(M\ttimes \D^2)$ into two parts:
\begin{align*}
Y_1 := Y \cap (M \times \C)\\
Y_2 := Y \setminus \Int Y_1.
\end{align*}
Observe that $Y_1$ is diffeomorphic to $M \times S^1$, and the contact structure is given by
\begin{align*}
\la + \tfrac{1}{2}(1 - \wt{\phi})dt,
\end{align*}
where $t$ is the coordinate on $S^1 = \R/(2\pi\Z)$.
We would like to remove the $\wt{\phi}$ term. To this effect, using the smallness assumption on $\wt{\phi}|_{M}$
we can find a contact form on $\bdy(M\ttimes\D^2)$ which is given by $\la + \tfrac{1}{2}dt$ on $Y_1$ and agrees with $(\wh{\la}+\frac{1}{2}xdy - \frac{1}{2}ydx)|_{\bdy(M\ttimes\D^2)}$ outside $\Op(Y_1)$.
By Gray's stability theorem, we get a contact embedding 
\begin{align*}
(M \times S^1, \la + \tfrac{1}{2}dt) \hookrightarrow (\Op (Y_1), (\wh{\la}+\tfrac{1}{2}xdy - \tfrac{1}{2}ydx)|_{\Op (Y_1)}).
\end{align*}
Using this embedding, we can assume without loss of generality that the contact form on $Y_1$ is given by $\la + \tfrac{1}{2}dt$.

Now suppose that $V_1,...,V_k \subset (M,\la)$ are a collection of exact Lagrangians with 
$\la|_{V_i} = dF_i$ for $i = 1,...,k$. If $|| F_i|| < \frac{\pi}{2k}$ for $i = 1,...,k$ we can lift $V_1,...,V_k$ to disjoint Legendrian spheres in $Y_1$. More precisely, we can find Legendrian spheres $\La(V_1),...,\La(V_k) \subset Y_1$ such that
\begin{itemize}
\item
for $i = 1,..,k$, $\La(V_i)$ projects diffeomorphically onto $V_i$ under the projection $Y_1 \rightarrow M$
\item for $i = 1,...,k$, the projection of $\La(V_i)$ to the $S^1$-factor is contained in an arc $\Delta_i \subset S^1$ such that $\Delta_i \cap \Delta_j = \emptyset$ if $i \neq j$ and $\Delta_1,...,\Delta_k$ are cyclically ordered in the counter-clockwise $S^1$ direction.
\end{itemize}
Indeed, we take the Legendrian lift of $V_i$ to be
\begin{align*}
\La(V_i) := \{(p,t) \in Y_1\;:\;  p \in V_i,\; t = -2F_i(p) + (i-1)2\pi/k\}.
\end{align*}
Finally, if the condition $|| F_i || < \frac{\pi}{2k}$ is not satisfied, we can simply replace $V_i$ by its image under the Liouville flow of $W$ for large backwards time, which is Hamiltonian isotopic to $V_i$.
We now set $\fW(M,\la,\phi;V_1,...,V_k)$ to be the Weinstein domain given by attaching Weinstein handles to $M \ttimes \D^2$ along the Legendrian lifts $\La(V_1),...,\La(V_k) \subset \bdy(M \ttimes \D^2)$.
\begin{definition}
A {\em Lefschetz presentation} for a Weinstein domain $(X^{2n+2},\la,\phi)$ is a Weinstein deformation equivalence between $(X^{2n+2},\la,\phi)$ and $\fW(M^{2n},\theta,\psi;V_1,...,V_k)$ for some {\em Weinstein Lefschetz datum} $(M^{2n},\theta,\psi;V_1,...,V_k)$. 
\end{definition}
\begin{remark}
The cyclic ordering of the vanishing cycles is an essential part of the Lefschetz data - changing the cyclic ordering often completely changes the symplectic topology (and even the smooth topology) of the total space.
\end{remark}

\begin{definition}\label{def:stabilization}
Let $\fW(M^{2n},\theta,\psi;V_1,\ldots,V_k)$ be a Weinstein Lefschetz fibration, and let $D^n \sse M$ be a Lagrangian disk whose boundary is a Legendrian sphere in $\dd M$. Let $H$ be a Weinstein handle attached to $M$ along $\dd D$, and let $S \sse M \cup H$ be the Lagrangian sphere formed by the union of $D^n$ and the core of $H$.
The Weinstein Lefschetz fibration $\fW(M \cup H,\theta,\psi;V_1,\ldots,V_k, S)$ is called the \emph{stabilization} of $\fW(M,\theta,\psi;V_1,\ldots,V_k)$ along $D$, which is called a \emph{stabilizing disk} (see for example \cite{van2010lecture}).
\end{definition}

\begin{remark}
As is well known, the Weinstein deformation type of the total space of a Lefschetz fibration does not change with stabilization. One can see this by viewing $S$ and $\dd D$ as the attaching maps of a canceling pair of Weinstein handles. Alternatively, it can be seen as boundary connect summing with $\D^{2n+2}_\std$, which has the Lefschetz presentation $\fW(T^*S^n,\lambda_\std,\psi;Z)$, where $Z$ is the zero section, and $\psi$ is a Morse perturbation of the function $|p|^2$.
\end{remark}

To apply subflexibilization to a Lefschetz fibration, we will need to make the following additional assumption on the vanishing cycles $V_1,...,V_k$.
\begin{assumption}\label{mainassumption}
There are Lagrangian disks $T_1,...,T_k \subset M$ with disjoint Legendrian boundaries in $\bdy M$ such that, for each $1 \leq i \leq k$, $T_i$ intersects $V_i$ transversely in a single point (however $T_i$ is allowed to intersect the other vanishing cycles arbitrarily). 
\end{assumption}
This is the same as Assumption 3.6 in \cite{abouzaid2010altering}. 
In fact, in order to apply the results from \cite{Siegel}, we will make a slightly stronger assumption as follows.
Consider an embedded path $\mu$ in the base of a Lefschetz fibration which intersects the critical values precisely at its endpoints. In the fiber above the midpoint of the path we get two vanishing cycles corresponding to the critical values at the two endpoints.
If these two (parametrized) Lagrangian spheres are Hamiltonian isotopic, we call $\mu$ a {\em matching path}, and we can construct a corresponding {\em matching cycle}, a Lagrangian sphere in the total space which projects to $\mu$ (see \cite[\S 16g]{seidelbook} for details).
\begin{definition}
A Weinstein Lefschetz datum $(M,\theta,\psi;V_1,...,V_k)$ is of {\em matching type} if $(M,\theta,\psi)$
itself admits Lefschetz presentation such that each $V_i$ is a matching cycle with respect to this auxiliary Lefschetz fibration.
\end{definition}
It is explained in Example 3.13 of \cite{abouzaid2010altering} that every smooth complex affine variety (viewed as a Weinstein domain) admits a Lefschetz presentation of matching type, and any matching type Lefschetz fibration satisfies Assumption \ref{mainassumption}.
\begin{remark}
By recent work of Giroux--Pardon \cite{giroux2014existence}, every Weinstein domain admits a Lefschetz presentation.
\end{remark}

\section{Holomorphic curve invariants and (sub)flexibility}\label{sec: holomorphic curve invariants}

Our main goal in this section is to prove that (twisted) symplectic cohomology vanishes for any flexible Weinstein domain.
To review symplectic cohomology and its twisted variant, we refer the reader to \cite{Siegel} and the references cited therein. 

Let $(W,\la)$ be a Liouville domain and let $(\wh{W},\wh{\la})$ denote its completion.
Working over any ground ring $\K$, we have $\sh(W,\la)$, the symplectic cohomology of $(W,\la)$.
Among other things, this is a unital $\K$-algebra which is invariant under symplectomorphisms (not necessarily exact) of $(\wh{W},\wh{\la})$ (see the discussion in \cite[\S 2c]{abouzaid2010altering}).
Informally, the symplectic cohomology of $(W,\la)$ is defined as the Hamiltonian Floer cohomology of $(\wh{W},\wh{\la})$ with respect to a Hamiltonian which grows sufficiently rapidly at infinity. 
In particular, the underlying chain complex is generated by $1$-periodic Hamiltonian orbits in $\wh{W}$, and the differential counts isolated solutions to Floer's equation.
Symbolically, the differential applied to an orbit $\gamma_+$ is of the form
\begin{align*}
\delta(\gamma_+) := \sum_{\substack{u \in \calM(\gamma_-,\gamma_+)^\oo}} \signu \gamma_-,
\end{align*}
where the sum is over all orbits $\gamma_-$. Here $\calM(\gamma_-,\gamma_+)^\oo$ denotes the moduli space of isolated (unparameterized) Floer trajectories asymptotic to $\gamma_-, \gamma_+$, and $\signu \in \{\pm 1\}$ is a certain associated sign.
Of course, one must work carefully to ensure that the Gromov's compactness theorem applies, and this necessitates picking Hamiltonians and almost complex structures of a special form at infinity.

From now on we assume for simplicity that $\K$ is a field.
In order to apply the twisting construction, we further assume there is an injective group homomorphism $T: \R \rightarrow \K^*$ from the additive group $\R$ to the group of invertible elements in $\K$. We set $t := T(1)$ and more generally $t^r := T(r)$ for any $r \in \R$. 
For example, we could take $\K$ to be simply $\R$ with $T(r) := e^r$. Or, more formally, we could take $\K$ to be the field of rational functions in a formal variable $t$ with real exponents and coefficients in say $\Z/2$.
Note that in this paper there is no need to take a Novikov completion since we are working in an exact setting.

\begin{remark}
In the case $c_1(W,\omega) = 0$, we can trivialize the canonical bundle of $(W,\omega)$, in which case $\sh(W,\la)$ inherits a $\Z$-grading via the Conley--Zehnder index. 
This grading plays a role in Proposition \ref{prop:subcrit handle attach} below.
\end{remark}

Now suppose that $\Omega$ is a closed two-form on $W$.
In this case we can define $\sh_\Omega(W,\la)$, the symplectic cohomology of $(W,\la)$ twisted by $\Omega$.
This is again a unital $\K$-algebra, depending only on the cohomology class of $\Omega$ and invariant under symplectomorphisms of $(\wh{W},\wh{\la})$ which respect $[\Omega] \in H^2(\wh{W};\R)$.
The definition of $\sh_\Omega(W,\la)$ is almost identical to that of $\sh(W,\la)$, except that in defining the differential and other relevant structure maps
we weight each curve $u: \R \times S^1 \rightarrow \wh{W}$ by the factor $t^{\int u^*\Omega}$.
For example, the twisted differential is of the form 
\begin{align*}
\delta_\Omega(\gamma_+) := \sum_{\substack{u \in \calM(\gamma_-,\gamma_+)^\oo}} t^{\int u^*\Omega}\signu \gamma_-.
\end{align*}

\sss

Our present goal is to prove the following:
\begin{thm}\label{thm:vanishing SH}
For $(W,\la,\phi)$ a flexible Weinstein domain, we have $\sh_\Omega(W,\la) = 0$ for any closed two-form $\Omega$ on $W$. 
\end{thm}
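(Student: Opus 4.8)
The plan is to reduce the statement to a local computation near the critical points of top index and then invoke a neck-stretching argument. First I would observe that, because symplectic cohomology and its twisted version are invariant under Weinstein homotopy (and more precisely under symplectomorphisms respecting the relevant cohomology class), it suffices to treat an \emph{explicitly} flexible Weinstein domain $(W,\la,\phi)$ with a partition $c_0 < \min(\phi) < c_1 < \dots < c_N$ as in Definition~\ref{def:flexible}. The idea is to argue that $\sh_\Omega(W,\la)$ is computed by a complex whose generators are the constant orbits together with the Hamiltonian orbits ``created'' by each subcritical and critical handle, and to show that the critical handles, being attached along loose Legendrians, contribute nothing to the cohomology. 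Since twisting only modifies the differential by the scalars $t^{\int u^\ast\Omega} \in \K^\ast$, any acyclicity argument that is ``local'' — i.e.\ that pairs up generators via trajectories confined to a small region where $\Omega$ can be assumed exact, or more robustly any argument producing a contracting homotopy — survives the twist verbatim.

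The key technical input I would use is the effect of attaching a loose critical handle. Concretely: if $(W_+,\la_+)$ is obtained from $(W_-,\la_-)$ by attaching a Weinstein $(n+1)$-handle along a loose Legendrian sphere $\Lambda \subset \bdy W_-$, then the Viterbo transfer map $\sh_\Omega(W_+) \to \sh_\Omega(W_-)$ is an isomorphism; equivalently, the two generators ($\check{x},\hat{x}$ in the usual notation) associated to the new Reeb orbit near the handle cancel in the differential. The cleanest way to see this is via the relation between looseness and the wrapped Floer cohomology of the cocore: by the results relating loose Legendrians to vanishing invariants (as in Bourgeois--Ekholm--Eliashberg, but here I would reprove it), a loose cocore disk has acyclic wrapped Floer cohomology, and by the long exact sequence of handle attachment (Cieliebak--Oancea type, or the surgery exact triangle) this forces the transfer map to be an isomorphism. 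Iterating over the handles in order of increasing $\phi$ — first all subcritical handles, which do not create any new orbits contributing to $\sh$ beyond what is forced by the exact sequence, then each elementary flexible cobordism $\{c_i \le \phi \le c_{i+1}\}$ whose critical handles are attached along a loose link — one peels off all the critical handles and is left with a subcritical domain, which has vanishing (twisted) symplectic cohomology by a standard argument (a subcritical domain is a product near infinity / splits off a $\C$ factor, or alternatively is displaceable in its completion).

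I would organize the proof as a sequence of lemmas: (1) a \textbf{twisting-invariance} lemma stating that all the structural maps (transfer maps, surgery exact sequences) used below have twisted analogues, which follows because the weights lie in $\K^\ast$ and the relevant holomorphic curves are unaffected by the weighting; (2) a \textbf{subcritical vanishing} lemma, $\sh_\Omega(W,\la) = 0$ for $W$ explicitly subcritical, proved by displacement or by the product structure; (3) the \textbf{loose handle} lemma, that attaching a critical handle along a loose Legendrian does not change twisted symplectic cohomology; and (4) assembling these by induction on $N$, using Remark~\ref{remark:explicitflexcrit}(3) to treat each elementary cobordism's critical handles as a loose link attached to the level set just below. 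The main obstacle I expect is step (3): making the looseness hypothesis interact correctly with the twisted Floer package. One must either (a) develop enough of twisted wrapped Floer theory to run the surgery exact triangle with the loose cocore and show its twisted wrapped cohomology still vanishes — this requires knowing that the ``looseness kills the cocore'' mechanism is local enough that the $\Omega$-weights can be gauged away on the relevant curves — or (b) find a direct chain-level cancellation: exhibit, for each loose critical handle, a discrete family of index-one Floer trajectories pairing $\check{x}$ with $\hat{x}$ such that the weighted count is a unit, using the loose chart to construct the needed trajectories explicitly. Approach (b) is more self-contained and is, I suspect, the ``alternative proof of independent interest'' alluded to in the introduction; I would pursue it, reducing the whole theorem to a model computation in a loose chart crossed with $\C$, where the relevant moduli spaces can be written down by hand and the $\Omega$-weights are manifestly trivial because $\Omega$ is exact on the chart.
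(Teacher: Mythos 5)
Your route is legitimate but it is not the one the paper takes; in fact the paper explicitly names your strategy (the surgery results of Bourgeois--Ekholm--Eliashberg combined with the triviality of the Chekanov--Eliashberg invariants of loose Legendrians, which it asserts ``would be straightforward to extend \ldots to the twisted case'') and then deliberately avoids it. The paper's actual proof is much softer and never touches a handle-by-handle Floer analysis: it (i) proves a twisted K\"unneth theorem (Theorem \ref{thm:kunneth}); (ii) shows that the product of an explicitly flexible Weinstein domain with any Weinstein domain is explicitly flexible (Proposition \ref{prop:products are flexible}, via an explicit loose chart for product Legendrians); (iii) invokes the Eliashberg--Murphy embedding h-principle (Theorem \ref{thm:subflex embed}) to upgrade the obvious non-exact split embedding $W\times D^*S^1 \hookrightarrow W\times \D^2_r$ to a genuine Liouville embedding, which is possible precisely because $W\times D^*S^1$ is flexible; and (iv) uses that $\sh_{\wt\Omega}(W\times\D^2_r)=0$ by K\"unneth, so unitality of the Viterbo transfer map forces $\sh_\Omega(W\times D^*S^1)=0$, and hence $\sh_\Omega(W)=0$ since $\sh(D^*S^1)\neq 0$. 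So the ``alternative proof of independent interest'' is this embedding argument, not the chain-level cancellation in a loose chart that you conjecture in your option (b). The trade-off: your approach, once step (3) is carried out, gives finer information (an isomorphism statement for each loose handle attachment, hence control over each elementary cobordism), whereas the paper's argument only gives the global vanishing but requires no surgery exact triangle, no wrapped Floer theory of cocores, and no local moduli-space computation --- the only hard input is the Lagrangian caps h-principle. Be aware that your step (3) is a genuine piece of work, not a formality: the BEE surgery package (or the cocore-generation technology you allude to) is substantial machinery, and verifying that the $\Omega$-weights are compatible with it requires more than the observation that they lie in $\K^*$; the paper's choice to sidestep this is precisely the point of its proof.
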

At least for untwisted symplectic cohomology, this result is already well-known to experts.
One argument uses the surgery results of \cite{bourgeois2012effect}, together with the triviality of Legendrian contact homology for loose Legendrians. 
It would be straightforward to extend the techniques of \cite{bourgeois2012effect} to make this argument work in the twisted case as well.
Instead, we give a proof of Theorem \ref{thm:vanishing SH} based on an embedding h-principle for flexible Weinstein domains (see Theorem \ref{thm:subflex embed} below).

Firstly, we will need the K\"unneth theorem for symplectic cohomology, adapted to the twisted case. 
An inspection of Oancea's proof \cite{oancea2006kunneth} in the untwisted setting shows that the twisting two-forms benignly come along for the ride,
yielding the following.
\begin{thm}\label{thm:kunneth}
Let $\Omega_1$ and $\Omega_2$ be closed two-forms on Liouville manifolds $(W_1,\la_1)$ and $(W_2,\la_2)$ respectively.
There is an isomorphism
\begin{align*}
\sh_{\Omega_1}(W_1,\la_1) \otimes \sh_{\Omega_2}(W_2,\la_2) \cong \sh_{\Omega_1+\Omega_2}(W_1 \times W_2,\la_1 + \la_2).
\end{align*}
\end{thm}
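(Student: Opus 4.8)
\textbf{Proof proposal for Theorem \ref{thm:kunneth}.}

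The plan is to follow Oancea's proof of the K\"unneth formula for symplectic cohomology \cite{oancea2006kunneth} essentially verbatim, checking at each stage that the twisting two-forms cause no trouble. First I would recall the geometric setup: given Liouville domains $(W_1,\la_1)$ and $(W_2,\la_2)$, one equips $\wh{W_1}\times\wh{W_2}$ with the Liouville structure $\la_1+\la_2$, and for the Floer data one uses split Hamiltonians of the form $H(x_1,x_2) = H_1(x_1) + H_2(x_2)$ and split almost complex structures $J = J_1 \oplus J_2$, each factor being admissible at infinity in the usual sense. With these choices the $1$-periodic orbits of $H$ are exactly pairs $(\gamma_1,\gamma_2)$ of $1$-periodic orbits of $H_1$ and $H_2$, and Floer's equation on the product decouples: a solution $u = (u_1,u_2): \R\times S^1 \to \wh{W_1}\times\wh{W_2}$ is precisely a pair of solutions $u_1, u_2$ of the respective Floer equations. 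Hence on the chain level there is a tautological identification $CF^*(H_1)\otimes CF^*(H_2) \cong CF^*(H)$ of generators, and one must compare the differentials.

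The key twisting observation is additivity of the weight exponent: for $\Omega_1$ on $W_1$ and $\Omega_2$ on $W_2$ (extended to the completions), the pullback of $\Omega_1 + \Omega_2$ — more precisely $\pi_1^*\Omega_1 + \pi_2^*\Omega_2$ — along $u=(u_1,u_2)$ satisfies $\int u^*(\Omega_1+\Omega_2) = \int u_1^*\Omega_1 + \int u_2^*\Omega_2$, so the weight $t^{\int u^*(\Omega_1+\Omega_2)}$ factors as $t^{\int u_1^*\Omega_1}\cdot t^{\int u_2^*\Omega_2}$. Therefore the twisted differential $\delta_{\Omega_1+\Omega_2}$ on the product, restricted to a split generator $\gamma_+^1\otimes\gamma_+^2$, is exactly $(\delta_{\Omega_1}\otimes 1 \pm 1\otimes\delta_{\Omega_2})(\gamma_+^1\otimes\gamma_+^2)$ up to the standard Koszul sign, since the moduli space of isolated product trajectories is a union of products $\calM(\gamma_-^1,\gamma_+^1)^\oo \times \{\gamma_+^2\}$ and $\{\gamma_+^1\}\times\calM(\gamma_-^2,\gamma_+^2)^\oo$ (one factor rigid, the other constant) with matching signs $\signu$, exactly as in the untwisted case. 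This identifies the twisted Floer complex of the product with the tensor product of the twisted complexes, giving the isomorphism on cohomology; the multiplicative (algebra) structure is handled by the same split-data argument applied to the pair-of-pants moduli spaces.

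The one genuine technical point — the same one present in Oancea's original argument — is the passage from the ``small slope'' complexes computed with split data to the actual symplectic cohomology, which is a direct limit over Hamiltonians of increasing slope. Split Hamiltonians are not cofinal among all admissible Hamiltonians on the product (a linear Hamiltonian on $\wh{W_1}\times\wh{W_2}$ at infinity is not of split form), so one must run Oancea's argument comparing split Hamiltonians with genuinely admissible ones, using that the relevant continuation maps are isomorphisms in the limit; this involves a maximum-principle/no-escape argument to confine Floer trajectories to a region where the split model is valid. I expect this comparison step to be the main obstacle, but it is purely a matter of observing that Oancea's estimates and continuation arguments are insensitive to the bounded, cohomology-class-dependent reweighting by $t^{\int u^*\Omega}$ — the weights are locally constant in families and do not affect compactness or the index theory — so the twisted statement follows with no new analytic input. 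Finally, one notes that $\sh_{\Omega_i}(W_i,\la_i)$ depends only on $[\Omega_i]\in H^2(\wh{W_i};\R)$, and $H^2(\wh{W_1}\times\wh{W_2};\R) \supseteq \pi_1^*H^2(\wh{W_1};\R)\oplus\pi_2^*H^2(\wh{W_2};\R)$, so the class $[\Omega_1+\Omega_2]$ on the right-hand side is unambiguous.
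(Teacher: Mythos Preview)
Your proposal is correct and follows exactly the approach the paper takes: the paper simply asserts that ``an inspection of Oancea's proof in the untwisted setting shows that the twisting two-forms benignly come along for the ride,'' and then recalls Oancea's strategy of interpolating between split and cylindrical data. Your write-up is in fact more detailed than what the paper provides, but the underlying argument---additivity of the twisting exponent on split trajectories, plus the observation that Oancea's continuation and compactness arguments are unaffected by the locally constant reweighting---is identical.
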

Recall that Oancea's basic idea is to consider a split Hamiltonian and almost complex structure on $\wh{W_1} \times \wh{W_2}$, for which the resulting Floer cohomology
is as expected via the algebraic K\"unneth theorem. 
However, this does not compute symplectic cohomology since this Floer data is not cylindrical at infinity with respect to the Liouville form on $\wh{W_1} \times \wh{W_2}$.
To rectify this, Oancea carefully modifies the split data to make it cylindrical at infinity and argues via action considerations that this procedure does not change the resulting homology.
Alternatively, an unpublished argument due to Mark McLean starts with cylindrical Floer data and modifies this to the product data. In this case one can appeal to the more robust integrated maximum principle of Abouzaid to rule out undesired Floer trajectories.

\sss

Next, we need to understand how flexibility behaves under products.
\begin{prop}\label{prop:products are flexible}
Let $(W,\la,\phi)$ be an explicitly flexible Weinstein domain and let $(W',\la',\phi')$ be any Weinstein domain. Then the product 
$(\wh{W} \times \wh{W'},\wh{\la} + \wh{\la'},\wh{\phi} + \wh{\phi'})$ is an explicitly flexible Weinstein manifold.
\end{prop}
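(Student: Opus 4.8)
The plan is to choose convenient Morse functions on both factors and then analyze the critical points of the sum directly. First I would observe that since $(W,\la,\phi)$ is explicitly flexible, Theorem \ref{thm:CE flex}(1) lets us replace $\phi$ (within its Weinstein homotopy class) by a Morse function whose critical points are as spread out as we like; in particular we may assume the critical points of $\wh\phi$ have pairwise distinct critical values and that no two of them are joined by a $Z_\la$-trajectory, so that each sits at the top of its own elementary cobordism with \emph{loose} attaching sphere. For the second factor $(W',\la',\phi')$ no flexibility is available, but that is fine: the key asymmetry is that every critical point of $\wh\phi'$ has index $\le n'+1$ where the \emph{fiber} dimension is large, so when we add a positive-index factor from $W$ the product critical point lands strictly below the top index and hence is automatically subcritical.

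The main step is the critical-point analysis of $\wh\phi + \wh\phi'$. The Liouville field of the product is $Z_{\wh\la} + Z_{\wh\la'}$ and it is gradient-like for $\wh\phi + \wh\phi'$, so the product is Weinstein; its critical points are exactly the pairs $(p,p')$ with $p$ a critical point of $\wh\phi$ and $p'$ a critical point of $\wh\phi'$, with $\op{ind}(p,p') = \op{ind}(p) + \op{ind}(p')$ and descending manifold $D_p \times D_{p'}$. Write $2n+2 = \dim W$, $2n'+2 = \dim W'$, so the product has half-dimension $n+n'+2$ and a product critical point is critical iff $\op{ind}(p)+\op{ind}(p') = n+n'+2$. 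Since $\op{ind}(p) \le n+1$ and $\op{ind}(p') \le n'+1$, the only way to be critical is $\op{ind}(p) = n+1$ and $\op{ind}(p') = n'+1$ simultaneously. So the critical (top-index) critical points of the product are precisely the products of critical critical points of the two factors; all other product critical points are subcritical, hence pose no looseness obstruction (cf. Remark \ref{remark:explicitflexcrit}(1)). I would then arrange the critical values: using distinct critical values $c_1 < \dots$ for the $\wh\phi$-criticals and the (finitely many) critical values of $\wh\phi'$, perturb so that all sums $\phi(p) + \phi'(p')$ over critical pairs are distinct, and choose the regular-value partition of the product accordingly so that each product critical point sits alone in an elementary cobordism.

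The heart of the argument — and the step I expect to be the main obstacle — is showing that the attaching sphere of each top-index product critical point $(p,p')$ is a \emph{loose} Legendrian in the relevant level set of $\wh\phi + \wh\phi'$. Here one exploits that $\as(p) \subset \wh\phi^{-1}(\phi(p)-\e)$ is already loose (by explicit flexibility of $W$). The attaching sphere of $(p,p')$ is, up to the identifications coming from the product structure, built from $\as(p)$ crossed with the attaching data of $p'$ in a neighborhood of the product, and looseness is a local, stable condition: a loose chart for $\as(p)$ in the $W$-factor can be promoted to a loose chart for the product attaching sphere by taking its product with a small ball in the $W'$-directions (one must check the dimension count $\frac{a}{\rho^2} < 2$ survives and that the extra cotangent directions can be absorbed into the $V_\rho$-factor of the loose chart model). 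This ``stabilization of loose charts under products'' is the technical crux; I would isolate it as a lemma, verifying it against the precise loose-chart definition recalled in \S\ref{subsec:looseness and flexibility}, using that $n \ge 2$ (so $\gamma_a \times Z_\rho$ has room) and that the symplectic normal framing is still trivial. Granting that lemma, the partition constructed above witnesses explicit flexibility of $(\wh W \times \wh{W'}, \wh\la + \wh{\la'}, \wh\phi + \wh{\phi'})$, completing the proof.
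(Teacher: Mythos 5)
Your reduction to the top-index product critical points is correct and matches the paper: a critical point $(p,p')$ of $\wh\phi+\wh{\phi'}$ has index $\op{ind}(p)+\op{ind}(p')$, so it is critical only when both factors are, and the subcritical ones impose no condition. But the step you yourself flag as the crux contains a genuine gap, in two respects. First, the attaching sphere of $(p,p')$ is $\bdy(D_p\times D_{p'})\cap N$, the \emph{join} of $\as(p)$ and $\as(p')$, not ``$\as(p)$ crossed with a small ball in the $W'$-directions''; to see any product structure at all one must parametrize the relevant portion of the level set $N$, which the paper does by using the Liouville flow to identify it with $\R_s\times Q\times Q'$ carrying the contact form $e^s\alpha_Q+e^{-s}\alpha_{Q'}$, under which the Legendrian becomes $\R_s\times\La_Q\times\La_{Q'}$ (an open dense chart of the join), and then transporting loose charts back into $N$ by a flow-time map. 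Second, and more seriously, stabilizing a loose chart by a \emph{small} ball in the new directions fails for exactly the reason you ask the reader to check: the loose-chart condition $\tfrac{a}{\rho^2}<2$ requires the chart to be \emph{wide} in the $V_\rho$-directions, so a small transverse box makes the ratio worse, not better. This is why Lemma \ref{lemma:wide nbd} demands $r$ sufficiently \emph{large}, and why the intermediate Lemma \ref{lemma:looseproductwithsymplectization} needs the dual Liouville field of the new factor to be complete, nonvanishing, and tangent to the Lagrangian there: one obtains an arbitrarily wide standard box by pushing a Darboux chart with the Liouville flow (the rescaling by $C\gg0$), and this is available for the symplectization factor $(\R_s\times Q', e^{-2s}\alpha_{Q'})$ with Lagrangian $\R_s\times\La_{Q'}$, but not for a small Darboux ball about a point of $D_{p'}$, where the flow exits the chart. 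Your proposal names the right condition to verify but sets it up in a form in which the verification fails, and omits the mechanism (completeness and nonvanishing of the Liouville field along the Lagrangian factor) that makes it succeed.

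Two smaller points. Perturbing $\phi$ and $\phi'$ to separate critical values changes the Weinstein structure whose \emph{explicit} flexibility is being asserted (explicit flexibility is not a homotopy-invariant notion, which is the whole point of the paper); the statement concerns the given product structure, and the paper instead handles coincidences among the sums $q+q'$ by showing the loose charts produced for distinct pairs $(q,q')$ with the same sum are disjoint, using that $\dtop(r)\times\dtop(r')$ misses the image of the flow chart when $(r,r')\neq(q,q')$. Also, by Remark \ref{remark:explicitflexcrit}(3) you do not need to spread out the critical values of $\phi$ at all: explicit flexibility already gives looseness of the full attaching link at each critical value.
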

\begin{proof}
The critical values of $\wh{\phi} + \wh{\phi'}$ are of the form $q + q'$ for $q$ a critical value of $\phi$ and $q'$ a critical value of $\phi'$.
Consider some pair $q,q'$ of critical values of $\phi$ and $\phi'$ respectively.
For sufficiently small $\e > 0$, set
\begin{itemize}
\item
$N := (\wh{\phi} + \wh{\phi'})^{-1}(q+q'-\e)$, equipped with the contact form $\alpha_N := (\wh{\la} + \wh{\la'})|_N$
\item 
$Q := \phi^{-1}(q - \e)$, equipped with the contact form $\alpha_Q := \la|_Q$
\item
$Q' := (\phi')^{-1}(q' - \e)$, equipped with the contact form $\alpha_{Q'} := \la'|_{Q'}$.
\end{itemize}
Let $\dtop(q) \subset W$ denote the union of the descending manifolds of all critical points of $\phi$ with critical value $q$ and index equal to 
half the dimension of $W$. Define $\dtop(q') \subset W'$ and $\dtop(q+q') \subset \wh{W} \times \wh{W'}$ similarly.
We have Legendrian links
\begin{itemize}
\item
$\La_N := \dtop(q+q') \cap N$ in $(N,\ker \alpha_N)$
\item
$\La_Q := \dtop(q) \cap Q$ in $(Q,\ker \alpha_Q)$
\item
$\La_{Q'} := \dtop(q') \cap Q'$ in $(Q',\ker \alpha_{Q'})$.
\end{itemize}
As explained in Remark \ref{remark:explicitflexcrit}(3), explicit flexibility of $(W,\la,\phi)$ means that $\La_Q$ is a loose Legendrian link for each critical value $q$ of $\phi$,
and our goal is to prove that $\La_N$ is loose for each critical value $q+q'$ of $\wh{\phi} + \wh{\phi'}$.

Using the Liouville flow of $\wh{\la}$, we get a smooth embedding $\Phi: \R_s \times Q \hookrightarrow \wh{W}$ such that
$\Phi^*(\wh{\la}) = e^s\alpha_Q$. Here we are using the fact that the Liouville vector field $Z_{\wh{\la}}$ on $\wh{W}$ is complete and exponentially expands the Liouville one-form $\wh{\la}$.
Similarly, we get an embedding $\Phi': \R_{s'} \times Q' \hookrightarrow \wh{W'}$ such that $(\Phi')^*(\wh{\la'}) = e^{s'}\alpha_{Q'}$.
Consider the hypersurface $$H := \{s + s' = 0\} \subset \R_s \times Q \times \R_{s'} \times Q',$$
which comes with a codimension one embedding $$i := (\Phi \times \Phi')|_H: H \hookrightarrow \wh{W} \times \wh{W'}$$ transverse to the Liouville vector field $Z_{\wh{\la} + \wh{\la'}}$.
Identifying $H$ with $\R_s \times Q \times Q'$, where $s'$ is determined by the relation $s' = -s$,
$H$ naturally inherits the contact one-form 
$$\alpha_H := i^*(\wh{\la} + \wh{\la'}) = e^s\alpha_Q + e^{-s}\alpha_{Q'},$$ along with the Legendrian submanifold
$$\La_H :=  i^{-1}(\dtop(q) \times \dtop(q')) = \R_s \times \La_Q \times \La_{Q'}.$$

By Lemma \ref{lemma:looseproductwithsymplectization} below, $\La_H$ is loose.
Indeed, the ambient contact structure on $H$ is of the form $\ker(\alpha_Q + e^{-2s}\alpha_{Q'})$.
In particular, the factor $\R_s \times Q'$ is equipped with the one-form $e^{-2s}\alpha_{Q'}$,
for which the dual vector field $-\tfrac{1}{2}\bdy_s$ is everywhere nonzero and tangent to the Lagrangian submanifold $\R_s \times \Lambda_{Q'}$.
This means we can find loose charts for $\La_H$ in $(H,\ker \alpha_H)$, one for each component in the complement of the other components.

We claim that these loose charts for $\La_H$ can be pushed forward under the Liouville flow of $\wh{\la} + \wh{\la'}$  
to loose charts for $\La_N$ in $(N,\ker \alpha_N)$.
Indeed, let $F_t: \wh{W} \times \wh{W'} \rightarrow \wh{W} \times \wh{W'}$ denote the time-$t$ Liouville flow for $t \in \R$. Since there are no critical values of $\phi$ in $[q-\e,q)$ or of $\phi'$ in $[q'-\e,q')$, for each $x \in H$ we have
${\underset{t \rightarrow -\infty}\lim(\wh{\phi} + \wh{\phi'})(F_t(i(x))) < q + q' - \e}$ and ${\underset{t \rightarrow +\infty}\lim(\wh{\phi} + \wh{\phi'})(F_t(i(x))) \geq q + q'}$.
Since $\wh{\phi} + \wh{\phi'}$ is increasing under the Liouville flow,
there is a uniquely determined ``flow time" function $T: H \rightarrow \R$ 
such that the map sending $x \in H$ to $F_{T(x)}(i(x))$ defines a smooth embedding $F_T: H \hookrightarrow N$
and satisfies $F_T^*(\alpha_N) = e^T\alpha_H$.
In particular, $F_T$ is a contact embedding
and maps $\La_H$ to $\La_N$ since the Liouville vector field is tangent to $\dtop(q+q')$.
 Therefore it sends the loose charts for $\La_H$ in $(H,\ker\alpha_H)$ to 
corresponding loose charts for $\La_N$ in $(N,\ker \alpha_N)$.

Strictly speaking, the above shows that $F_T(\La_H)$ is a loose link in $(N,\ker\alpha_N)$, and in principle there could be duplicates in the collection $\{q + q'\}$ where $q$ and $q'$ vary over the critical values of $\phi$ and $\phi'$ respectively. Therefore we need to check that 
the loose charts we produce for $\La_N$ in $(N,\ker\alpha_N)$ for different pairs $(q,q')$ are disjoint.
To see this, suppose that $r,r'$ is a distinct pair of critical values of $\phi,\phi'$ respectively, with $r + r' = q + q'$.
We claim that $\dtop(r) \times \dtop(r')$ is disjoint from $i(H)$. Since $\dtop(r) \times\dtop(r')$ is Liouville flow invariant, this will imply that it is also disjoint from $F_T(H)$, 
and hence from the above loose charts for $\La_H$ in $H$ corresponding to $(q,q')$. 
In fact, observe that $\dtop(r) \times \dtop(r')$ is disjoint from the image of $\Phi \times \Phi'$.
Indeed, if $r < q$, we must have that $\dtop(r)$ is disjoint from $Q$, since upward flow trajectories ending at level $r$ will never reach level $q - \e$. Otherwise, if $r' < q'$, we must have that $\dtop(r')$ is disjoint from $Q'$.
Either way, by invariance with respect to the flows of $Z_{\wh{\la}}$ and $Z_{\wh{\la}'}$ we have that $\dtop(r)\times\dtop(r')$ is disjoint from $\Phi(\{0\}\times Q) \times \Phi'(\{0\} \times Q')$, and the claim follows.

\end{proof}

\begin{lemma}\label{lemma:looseproductwithsymplectization}
Let $\La$ be a loose Legendrian link in a closed contact manifold $(Q,\ker\alpha)$, and let $L$ be a Lagrangian in an exact symplectic manifold $(M,\theta)$.
Assume that $\theta$ vanishes when restricted to $L$ and that the dual vector field $Z_\theta$ has flow defined for all time and is nonvanishing along $L$ and tangent to $L$.
Then $\La \times L$ is a loose Legendrian in the contact manifold $(Q \times M, \ker(\alpha + \theta))$.
\end{lemma}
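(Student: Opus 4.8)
The plan is to build, near a well-chosen point of $\La\x L$, an explicit loose chart by ``thickening'' in the $M$-directions a loose chart for $\La$.

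First I would dispatch the formal points. A rank count shows that in $(d\alpha+d\theta)^{k+m}$ only the term $\binom{k+m}{k}(d\alpha)^k\wedge(d\theta)^m$ survives, so $(\alpha+\theta)\wedge(d\alpha+d\theta)^{k+m}$ equals, up to a positive constant, the volume form $\alpha\wedge(d\alpha)^k\wedge(d\theta)^m$ --- the $\theta$-contribution dies, being a $(2m{+}1)$-form pulled back from $M^{2m}$; hence $\alpha+\theta$ is contact. Since $\alpha|_\La=0$ and $\theta|_L=0$, $(\alpha+\theta)|_{\La\x L}=0$, and $\dim(\La\x L)=k+m$ is the Legendrian dimension, so $\La\x L$ is Legendrian. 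One checks the Reeb field of $\alpha+\theta$ is just that of $(Q,\alpha)$, so the Reeb chords of $\La\x L$ are those of $\La$ times a point of $L$, with unchanged action --- this keeps the parameter $a$ below under control. Finally, looseness is tested component-by-component in the complement of the rest, and the loose chart of a component $\La_i$ lies in $Q\sm\bigcup_{i'\neq i}\La_{i'}$, so its thickening misses $\La_{i'}\x L$ (and thickenings of distinct components of $L$ are automatically disjoint); thus I may assume $\La$ is a single loose sphere with loose chart $\psi\colon(B^3_\std\x V_\rho,\gamma_a\x Z_\rho)\hookrightarrow(Q,\La)$, $a/\rho^2<2$, and $L$ connected.

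Next I would fix $x_0\in L$ and put $(M,\theta)$ near $x_0$ into symplectization normal form. Because $Z_\theta$ is nonvanishing on $L$ and tangent to $L$, a flow box gives coordinates in which $Z_\theta=\partial_\tau$ and $L$ is $\partial_\tau$-invariant; since $\theta(Z_\theta)=0$ and $\mathcal L_{Z_\theta}\theta=\theta$ (both automatic for a Liouville form), one gets $\theta=e^\tau\beta_0$ for a $\tau$-independent form $\beta_0$, necessarily contact on the $\{\tau=0\}$-slice, while $\La_\Sigma:=L\cap\{\tau=0\}$ is Legendrian there (using $\theta|_L=0$ once more). A Legendrian Darboux chart then identifies a neighborhood of $x_0$ with a piece of $(\R_\tau\x\Sigma,\,e^\tau\alpha_\Sigma)$, with $\Sigma\sse\R^{2m-1}_\std$ a Darboux ball, $\La_\Sigma$ flat, $L=\R_\tau\x\La_\Sigma$, and $\tau$ allowed to range over a bounded interval of any prescribed length by completeness of $Z_\theta$. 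Writing $\alpha_\Sigma=dw-\sum_{i=1}^{m-1}\eta_i\,dx_i$, $\La_\Sigma=\{w=0,\eta=0\}$, and substituting $Q_0:=e^\tau$, $P_0:=w$, $Q_i:=x_i$, $P_i:=e^\tau\eta_i$, one finds $e^\tau\alpha_\Sigma=-\sum_{l=0}^{m-1}P_l\,dQ_l$ modulo the exact form $d(Q_0P_0)$, with $L\mapsto\{P=0\}$. Both $\alpha+\theta$ and $\alpha-\sum_l P_l\,dQ_l$ make $\La\x L$ Legendrian and differ by this exact term, whose relevant part may be cut off, so a relative Moser argument fixing $\La\x L$ identifies $\ker(\alpha+\theta)$ near $x_0$ with $\ker(\alpha-\sum_l P_l\,dQ_l)$. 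This reduces the lemma to producing a loose chart for $\La\x(\text{flat }m\text{-disk})$ in $(Q\x W,\ \ker(\alpha-\sum_l P_l\,dQ_l))$ with $W$ a cotangent box.

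In that model, near $p\x x_0$ with $p\in\La$, the chart $\psi$ presents a neighborhood as $B^3_\std\x V_\rho\x W'$ with contact form $dz-y\,dx-\sum_j p_j\,dq_j-\sum_l P_l\,dQ_l$, under which $\La\x L$ becomes $\gamma_a$ times the flat $(k{+}m{-}1)$-disk $Z_\rho\x\{P=0\}$; so for any $\sigma$ with $a/\sigma^2<2$ we obtain a genuine loose chart $(B^3_\std\x V_\sigma^{2(k+m)-2},\gamma_a\x Z_\sigma^{k+m-1})\hookrightarrow(Q\x M,\La\x L)$. Such $\sigma$ exist --- take any $\sigma\in(\sqrt{a/2},\rho\,]$ --- because the $m$ new base directions come from $L$, which contains a long $Z_\theta$-orbit, and the rescaling $(\tau,x,w,\eta)\mapsto(\tau-2\ln\mu,\mu x,\mu^2 w,\mu\eta)$ preserves $e^\tau\alpha_\Sigma$, so together with a translation of $Q_0$ the new base and fiber directions of the cotangent box can be made arbitrarily wide and appropriately centered without disturbing the $\rho$-bounded directions inherited from $\psi$. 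I expect the main obstacle to be exactly this last size bookkeeping: arranging the normal form and the rescalings so that the resulting stabilization domain is a genuine $V_\sigma$ of radius $\sigma>\sqrt{a/2}$ in all $k{+}m{-}1$ base directions at once --- i.e. checking that the constraints from the Darboux-bounded $\La_\Sigma$-directions, the unbounded $Q_0=e^\tau$-direction, and the $\rho$-bounded directions from $\psi$ are jointly compatible with the looseness inequality $a/\rho^2<2$ that $\La$ already satisfies. Granting this, $\psi$ extends to the desired loose chart and the lemma follows.
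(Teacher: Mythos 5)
Your strategy is essentially the one the paper follows: put $(M,\theta)$ near a point of $L$ into a normal form in which $\theta$ becomes a standard Liouville form on a cotangent box with $L$ sitting as the zero section, correct the discrepancy with $\lacan$ (an exact term) by a fiberwise Reeb-flow argument, use completeness of $Z_\theta$ to make the box arbitrarily wide, and then thicken a loose chart for $\La$ by the new cotangent directions. The paper packages the normal form via the modified Liouville form $\lamod = dp_n - \sum_i p_i\,dq_i$, whose Liouville field is nonvanishing along and tangent to the zero section, matched to $(\Op(x_0),\theta)$ by a contact Darboux chart on a hypersurface transverse to $Z_\theta$ plus flow equivariance; your coordinates $Q_0=e^\tau$, $P_0=w$, $P_i=e^\tau\eta_i$ carry the same content, and your rescaling $(\tau,x,w,\eta)\mapsto(\tau-2\ln\mu,\mu x,\mu^2w,\mu\eta)$ plays the role of the paper's time-$\log(C)$ flow of $Z_\theta$.

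The one step that is not right as written is the final thickening. A loose chart $\psi$ is only a contact embedding, so $\psi^*\alpha = e^f\bigl(dz - y\,dx - \sum_j p_j\,dq_j\bigr)$ for some nonconstant function $f$; hence the naive product chart pulls back $\alpha+\theta$ to $e^f\bigl(dz - y\,dx - \sum_j p_j\,dq_j\bigr) - \sum_l P_l\,dQ_l$, which is \emph{not} conformal to the standard product form, so $\gamma_a\times Z_\sigma$ in these coordinates is not yet a loose chart in the sense of the definition. The repair --- which is exactly the content of the paper's Lemma \ref{lemma:wide nbd} --- is to reparametrize the new cotangent factor by the time-$f$ Liouville flow of $\lacan$; this fixes the zero section (since $Z_{\lacan}=\sum_l P_l\bdy_{P_l}$ vanishes there) but dilates the $P$-directions by the factor $e^{\max f}$, and this, rather than only the inequality $\sigma>\sqrt{a/2}$, is the real reason the new box must be made wide. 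Since your rescaling does produce an arbitrarily wide box, the fix is available and your argument closes; but the ``size bookkeeping'' you flag as the main obstacle should be aimed at absorbing this conformal factor.
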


\begin{proof}
Recall from \S\ref{subsec:looseness and flexibility} that $V_r \subset T^*\R^n$ is the box of radius $r$, typically equipped with the canonical Liouville one-form $\lacan = -\sum_{i=1}^np_idq_i$.
Observe that the Liouville vector field $Z_{\lacan} = \sum_{i=1}^n p_i\bdy_{p_i}$ vanishes identically on $Z_r = \{p_1 = ... = p_n = 0\} \subset V_r$, whereas by assumption this is not the case for $Z_\theta$ on $L$, and therefore some preparation is needed before we can appeal to Lemma \ref{lemma:wide nbd} below\footnote{We thank the anonymous referee for pointing out a mistake related to this point in an earlier draft and suggesting a fix.}.
 
Consider the modified Liouville one-form on $V_r$, given by $\lamod = dp_n - \sum_{i=1}^n p_idq_i$.
Notice that the corresponding Liouville vector field $Z_{\lamod} = \bdy_{q_n} + \sum_{i=1}^n p_i\bdy_{p_i}$ is tangent and nonvanishing along $Z_r$.
We claim that we can find an embedding $F: V_r \hookrightarrow M$ such that $F^*\theta = \lamod$ and $F(Z_r) \subset L$, for some sufficiently small $r > 0$.
Indeed, note that $\lamod$ restricts to a contact form on $\{q_n = 0\} \subset V_r$, with respect to which
$\{q_n = 0\} \cap Z_r$ is Legendrian.
Similarly, for a point $x \in L$, we can find a small codimension one submanifold $H \subset \Op(x)$ which is tranverse to $Z_\theta$.
Then $\theta|_H$ is a contact form on $H$, with respect to which $L \cap H$ is Legendrian.
By the contact Darboux thereom (see \cite[Proposition 6.19]{cieliebak2012stein}),
we can find an embedding $f: \{q_n = 0\} \cap V_r \hookrightarrow H$ such that
$f(\{q_n = 0\} \cap Z_r) \subset L \cap H$ and $f^*(\theta|_H) = \lamod|_{ \{q_n = 0\} \cap V_r }$. 
Then the map $F$ is obtained by extending $f$ to be equivariant with respect to the flows of $Z_{\lamod}$ and $Z_{\theta}$ and possibly restricting to a smaller $r > 0$.

Similarly, consider a collection of points $x_1,...,x_k \in L$, one on each connected component $L_i$ of $L$.
Applying the argument above, we can find embeddings $F_i: V_r \hookrightarrow M$ for $i = 1,...,k$ such that
  $F_i^*\theta = \lamod$ and $F_i(Z_r) \subset L_i$.
We also arrange that the image of $F_i$ is disjoint from $L \setminus L_i$.

Fix some $C > 0$ sufficiently large relative to $r$.
The flow of $Z_\theta$ for time $\log(C)$ gives a diffeomorphism $M \rightarrow M$ which pulls back $\theta$ to $C\theta$. 
Then the composition given by $F_i$ followed by this flow is a map $\wt{F_i}: V_r \hookrightarrow M$ such that $\wt{F_i}^*\theta = C\lamod$.
Since $Z_\theta$ is tangent to $L$, we also have $\wt{F_i}(Z_r) \subset L_i$.

Next, consider a function $T: V_r \rightarrow \R$,
and let $G_T: Q \times V_r \rightarrow Q \times V_r$ be the map given by the time-$T$ Reeb flow of $\alpha$ on the first factor and the identity on the second factor.
Then we have $G_T^*(\alpha + C\lamod) = \alpha + C\lamod + dT$.
In particular, setting $T = -Cp_n$, we get a map
$G := G_T$ which satisfies $G^*(\alpha + C\lamod) = \alpha + C\lamod - Cdp_n = \alpha + C\lacan$.
Since $T$ vanishes on $\La$, we also have $G(\La \times Z_r) = \La \times Z_r$.

Finally, the composition $(\op{Id} \times \wt{F_i}) \circ G: Q \times V_r \hookrightarrow Q \times M$ maps $\La \times Z_r$ to $\La \times L_i$ and pulls back $\alpha + \theta$ to $\alpha + C\lacan$. 
By rescaling, we can identify $(V_r,C\lacan)$ with $(V_R,\lacan)$ for $R = \sqrt{C}r$.
In particular, since $R$ is arbitrarily large, it follows from Lemma \ref{lemma:wide nbd} below that $\La \times L$ is loose.
\end{proof}

\begin{lemma}\label{lemma:wide nbd}
Let $\La$ be a loose Legendrian link in a contact manifold $(N^{2l+1},\ker \alpha)$.
Then $\La \times Z_r^{n}$ is a loose Legendrian link in the contact manifold
${(N^{2l+1} \times V_r^{2n},\ker(\alpha - p_1dq_1 - ... - p_ndq_n))}$, provided $r$ is sufficiently large.
\end{lemma}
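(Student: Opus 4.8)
The plan is to build loose charts for the components of $\La \x Z_r^{n}$ directly out of loose charts for the components of $\La$, by taking products with a suitable restriction of the identity map on $V_r^{2n}$. The hypothesis that $r$ be large will be used only to ensure that enough of the extra box $V_r^{2n}$ can be combined with the given chart to preserve the numerical loose-chart inequality $a/\rho^2 < 2$.

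In more detail: since $\La \sse N^{2l+1}$ is a loose link, for each of its (finitely many) components $\La_i$ fix a loose chart $\iota_i \colon (B^3_\std \x V_{\rho_i}^{2l-2}, \gamma_{a_i} \x Z_{\rho_i}) \hookrightarrow (N, \La_i)$ whose image is disjoint from the other components and with $a_i/\rho_i^2 < 2$; by the standard normalization of loose charts (discussed below) we may assume each $\iota_i$ is \emph{strict}, i.e.\ $\iota_i^*\alpha = \alpha_0 := dz - ydx - \sum_k p_k dq_k$. Now suppose $r \geq \rho_i$ for every $i$ (this is what ``$r$ sufficiently large'' means here). For each $i$ we have $V_{\rho_i}^{2(l-1+n)} \sse V_{\rho_i}^{2l-2} \x V_r^{2n}$ and $Z_{\rho_i}^{l-1+n} \sse Z_{\rho_i}^{l-1} \x Z_r^{n}$ — this is where $r \geq \rho_i$ enters — so restricting $\iota_i \x \op{Id}_{V_r^{2n}}$ to $B^3_\std \x V_{\rho_i}^{2(l-1+n)}$ produces a strict contact embedding of $(B^3_\std \x V_{\rho_i}^{2(l-1+n)}, \gamma_{a_i} \x Z_{\rho_i}^{l-1+n})$ into $(N \x V_r^{2n}, \ker(\alpha - \sum_j p'_j dq'_j))$ which carries $\gamma_{a_i} \x Z_{\rho_i}^{l-1+n}$ into $\La_i \x Z_r^{n}$ and whose image, being contained in $\iota_i(B^3_\std \x V_{\rho_i}^{2l-2}) \x V_r^{2n}$, is disjoint from $\La_j \x Z_r^{n}$ for $j \neq i$. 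Since $a_i/\rho_i^2 < 2$, this is a genuine loose chart for $\La_i \x Z_r^{n}$ in the complement of the remaining components, hence $\La \x Z_r^{n}$ is loose.

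The delicate point is the normalization step used above. A loose chart is a priori merely a contact embedding, so in general $\iota_i^*\alpha = g_i\alpha_0$ for some positive function $g_i$, and then $\iota_i \x \op{Id}$ pulls the ambient contact form back to $g_i\alpha_0 - \sum_j p'_j dq'_j$ rather than to the standard model form. I expect this to be the main obstacle: one has to verify that the contact structure $\ker(g_i\alpha_0 - \sum_j p'_j dq'_j)$ on $B^3_\std \x V_{\rho_i}^{2(l-1+n)}$ — which already agrees with the standard model along the Legendrian $\gamma_{a_i} \x Z_{\rho_i}^{l-1+n}$ — can be straightened to the standard model by a contactomorphism fixing that Legendrian, on a box still large enough to maintain the loose inequality. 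This is a relative contact-neighborhood (Moser-type) argument, and the role of taking ``$r$ sufficiently large'' is precisely to leave room for such a box. Granting this routine normalization, the construction above supplies the required loose charts for $\La \x Z_r^{n}$.
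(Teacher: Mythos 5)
Your reduction to ``(chart on $N$) $\times$ (restriction of the identity on $V_r^{2n}$)'' is the natural first attempt, and you have correctly located its defect: $\iota_i \times \op{Id}$ is a contact embedding for $\ker(\alpha - \sum_j p'_j dq'_j)$ only if $\iota_i$ is strict, since $g_i\alpha_0 - \sum_j p'_jdq'_j$ is not a conformal multiple of $\alpha_0 - \sum_j p'_jdq'_j$ when $g_i$ is nonconstant. But the normalization you then defer as ``routine'' is in fact the entire content of the lemma, and the Moser/Gray argument you sketch does not obviously close it. Gray stability straightens $\ker(g_i\alpha_0 - \sum_j p'_jdq'_j)$ to the model only on \emph{some} neighborhood of the Legendrian $\gamma_{a_i}\times Z^{l-1+n}_{\rho_i}$ (which moreover has boundary, so one must shrink), and it gives no control on the radius $\rho'$ of the box that survives; since looseness is the quantitative inequality $a_i/(\rho')^2 < 2$, an uncontrolled shrinking destroys exactly what you need. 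Rescuing this line would require the nontrivial fact that a loose Legendrian admits charts with $a/\rho^2$ arbitrarily small, which you do not invoke. Note also that enlarging $r$ cannot ``leave room'' for this straightening as you suggest: the discrepancy $(g_i-1)\alpha_0$ lives entirely in the $B^3_\std\times V^{2l-2}_{\rho_i}$ directions, so as long as you use the identity on the second factor, a larger $r$ buys you nothing.

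The paper closes this gap by a different, computation-free device: writing $G^*\alpha = e^f\alpha_0$ for the given chart $G$ on $N$, it takes the product chart to be $G$ on the first factor together with the \emph{time-$f$ Liouville flow} of $-\sum_j p'_jdq'_j$ on the $V_r^{2n}$ factor, i.e.\ $(x,q',p')\mapsto \bigl(G(x), q', e^{f(x)}p'\bigr)$. Because $-\sum_j p'_jdq'_j$ contains no $dp'_j$ terms, no $df$ correction appears, and the total pullback of $\alpha - \sum_j p'_jdq'_j$ is $e^f\bigl(\alpha_0 - \sum_j p'_jdq'_j\bigr)$: a genuine contact embedding of the standard product model with the same $a_i$ and $\rho_i$, so the loose inequality is untouched. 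This is also where ``$r$ sufficiently large'' is actually used (so that the $e^f$-dilated box lands inside $V_r^{2n}$), not in the role you assign to it. Either adopt this mechanism or supply, with quantitative control, the chart-normalization lemma you are implicitly relying on.
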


\begin{proof}
Since $\La$ is loose, for each connected component there is a contact embedding of pairs
\begin{align*}
G: (B^3_\std \times V_\rho^{2l-2}, \gamma_a \times Z_\rho^{l-1}) \hookrightarrow (N^{2l+1},\La)
\end{align*}
for some $\rho>0$ with $a < 2\rho^2$,
with image disjoint from the other components of $\La$.
This means that $G^*\alpha$ is equal to $z - ydx - p_1dq_1 - ... - p_{l-1}dq_{l-1}$ 
times $e^f$ for some smooth function $f$.
Then for each component of $\La$ we can define a contact embedding of pairs
\begin{align*}
(B^3_\std \times V_\rho^{2l-2} \times V_\rho^{2n},\gamma_a \times Z_\rho^{l-1} \times Z_\rho^n) \hookrightarrow (N^{2l+1} \times V_r^{2n},\La \times Z_r^n)
\end{align*}
given by $G$ on the $B^3_\std \times V_\rho^{2l-2}$ factor and the time $f$ Liouville flow on the remaining factor.
Note that this map is well-defined for $r$ sufficiently large, and the image is disjoint from the other components of $\La \times Z_r^n$. This restricts to a loose chart
\begin{align*}
(B^3_\std \times V_\rho^{2l-2+2n},\gamma_a \times Z_\rho^{l-1+n}) \hookrightarrow (N^{2l+1} \times V_r^{2n},\La \times Z_r^n).
\end{align*}
\end{proof}

Finally, we have the following corollary of the Lagrangian caps h-principle, amounting to an h-principle for symplectic embeddings of flexible Weinstein domains.
\begin{thm}{\normalfont (Eliashberg--Murphy \cite[\S 6]{eliashberg2013lagrangian})} \label{thm:subflex embed}
Let $(W^{2n}, \lambda, \p)$ be a flexible Weinstein domain, and let $F: W \hookrightarrow (X^{2n}, \omega_X)$ be any smooth codimension zero embedding into a symplectic manifold. If $n=3$, assume $X$ has infinite Gromov width.\footnote{This extra assumption on the $n=3$ case proved to be unnecessary, as was shown by T. Yoshiyasu \cite{Yoshiyasu}.}
Suppose that $F^*\omega_X$ is an exact two-form and is homotopic to $d\la$ through non-degenerate two-forms.
Then $F$ is isotopic to a symplectic embedding $f: (W, \e d\lambda) \hookrightarrow (X, \omega_X)$, for some sufficiently small $\e >0$.
If $(X,\omega_X)$ is an exact symplectic manifold, then we can also arrange that $f$ is an exact symplectic embedding.
\end{thm}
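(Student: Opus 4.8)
The plan is to realize the (singular) Weinstein skeleton of $W$ inside $X$ one handle at a time --- using the classical subcritical h-principle for the subcritical handles and Eliashberg--Murphy's Lagrangian caps h-principle for the critical ones --- and then to thicken the resulting skeleton via the Weinstein neighborhood theorem. Throughout, $n \ge 3$ (flexibility in the sense of Definition \ref{def:flexible} requires $\dim W \ge 6$), so the attaching spheres of critical handles have dimension $n-1 \ge 2$ and looseness makes sense for them.

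First I would replace $(W,\la,\p)$ by an \emph{explicitly} flexible representative of its Weinstein homotopy class (Theorem \ref{thm:CE flex}); this affects neither the smooth embedding problem nor the conclusion (which is insensitive to shrinking $\e$), so assume $\p$ has regular values $c_0 < \min\p < c_1 < \cdots < c_N$ as in Definition \ref{def:flexible}. Write $W_0 := \{\p \le c_1\}$, a subcritical Weinstein domain, and for $j = 1, \dots, m$ let $W_j$ be obtained from $W_{j-1}$ by attaching the $j$-th block of critical $n$-handles along a loose Legendrian link $\La^{(j)} = \La^{(j)}_1 \cup \cdots \cup \La^{(j)}_{r_j} \subset \bdy W_{j-1}$ (intervening subcritical handles only simplify matters and I suppress them), so that $W_m = W$. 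I then claim, by induction on $j$, that after a compactly supported isotopy of $F$ there exist small $\e_j > 0$ and a symplectic embedding $f_j : (W_j, \e_j d\la) \hookrightarrow (X,\omega_X)$ isotopic to $F|_{W_j}$, which is moreover exact if $X$ is exact.

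\emph{Base case.} A subcritical Weinstein domain is a regular neighborhood of its isotropic skeleton $\mathrm{Sk}(W_0)$, all strata of dimension $< n$. The formal data carried by $F$ together with the homotopy of $F^*\omega_X$ to $d\la$ provides a formal framed isotropic embedding of $\mathrm{Sk}(W_0)$; by Gromov's h-principle for subcritical isotropics \cite{gromov2013partial} it is homotopic to a genuine one, and a small further perturbation makes $F$ symplectic on a neighborhood of $\mathrm{Sk}(W_0)$ with $F^*\omega_X = \e_0 d\la$ there; the Weinstein neighborhood theorem then extracts $f_0$. (When $X$ is exact, $\la|_{\mathrm{Sk}(W_0)}$ is closed as $\mathrm{Sk}(W_0)$ is isotropic, and one arranges a primitive so that $f_0$ is exact.)

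\emph{Inductive step.} Given $f_{j-1}$, set $E := X \sm \Int f_{j-1}(W_{j-1})$; near $f_{j-1}(\bdy W_{j-1})$ it is the concave side of a contact hypersurface, and $f_{j-1}(\La^{(j)})$ sits there as a loose Legendrian link. The core disks $D_1,\dots,D_{r_j}$ of the block are Lagrangian disks with $\bdy D_\ell = \La^{(j)}_\ell$, and $F$ supplies a formal Lagrangian filling (``cap'') of $f_{j-1}(\La^{(j)})$ by disks in $E$ with prescribed symplectic normal framings. By the Lagrangian caps h-principle of Eliashberg--Murphy \cite{eliashberg2013lagrangian}, in its relative, parametric, multi-component form --- which applies because the link is loose (in the boundary case $n = 3$ one invokes the infinite Gromov width hypothesis to fit large enough loose charts, a hypothesis later shown superfluous by Yoshiyasu \cite{Yoshiyasu}) --- this formal filling is homotopic to genuine, pairwise disjoint, embedded Lagrangian disks $\wt D_1, \dots, \wt D_{r_j}$ in $E$ with $\bdy \wt D_\ell = f_{j-1}(\La^{(j)}_\ell)$ realizing the framings; when $X$ is exact these disks are automatically exact, being simply connected. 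Now $f_{j-1}(\mathrm{Sk}(W_{j-1})) \cup \wt D_1 \cup \cdots \cup \wt D_{r_j}$ is a singular Weinstein skeleton realizing $W_{j-1} \rightsquigarrow W_j$, and the Weinstein neighborhood theorem for such skeleta \cite{cieliebak2012stein} yields $f_j$. After $m$ steps, $f := f_m$ is the desired symplectic embedding.

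\emph{Main obstacle.} Granting the caps h-principle the remainder is the standard ``build the skeleton, then thicken it'' scheme, so the substance is concentrated in three points: (i) producing the Lagrangian core disks of a block \emph{simultaneously}, embedded, pairwise disjoint and disjoint from the previously built skeleton --- this is exactly what looseness of the link and the relative form of the h-principle provide; (ii) matching each attaching sphere's conformal symplectic normal framing with the one dictated by $(W,\la,\p)$, so that the neighborhood extracted is really $(W_j, \e_j d\la)$ rather than some other handle attachment --- the framing is part of the formal data and the h-principle can be run relative to it; and (iii) the genuinely delicate $n = 3$ case, where loose charts need room and hence the Gromov-width hypothesis (whose later removal is itself non-trivial).
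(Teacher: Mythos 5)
The paper does not prove this statement: it is imported verbatim from Eliashberg--Murphy \cite[\S 6]{eliashberg2013lagrangian} (with the footnote crediting Yoshiyasu for removing the $n=3$ hypothesis), so there is no in-paper proof to compare against. Your outline --- realize the subcritical skeleton by Gromov's h-principle, fill the loose attaching links of the critical handles by embedded Lagrangian disks in the concave complement via the Lagrangian caps h-principle, and thicken by the Weinstein neighborhood theorem --- is essentially the argument of the cited reference, and the points you isolate (simultaneous disjoint caps for a loose link, matching the conformal symplectic normal framings, and the extra room needed when $n=3$) are indeed where the substance lies in that proof.
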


\noindent \textit{Proof of Theorem \ref{thm:vanishing SH}}: 
Let $(W,\la,\phi)$ be a flexible Weinstein domain, and let $(D^*S^1,\la_\can)$ denote the unit disk cotangent bundle of $S^1$, equipped with its canonical Liouville form. By Theorem \ref{thm:kunneth}, we have 
\begin{align*}
\sh_{\Omega}(W,\la) \otimes \sh(D^*S^1,\la_\can) \cong \sh_{\Omega}(W \times D^*S^1,\la+\la_\can)
\end{align*}
(here $\Omega$ is being abusively used to also denote its pullback to $W \times D^*S^1$, and the twisting two-form on $D^*S^1$ is trivial).
Recall that $\sh(D^*S^1,\la_\can)$ is isomorphic to the homology of the free loop space of $S^1$ (see \cite{abouzaid2013symplectic}) and in particular is nontrivial.
Therefore it suffices to prove that $\sh_{\Omega}(W \times D^*S^1,\la+\la_\can)$ is trivial.

Let $(\D^2_r,\la_\std)$ denote the standard Liouville disk of radius $r >0$. Consider a split symplectic embedding 
\begin{align*}
i: W \times D^*S^1 \hookrightarrow W \times \D^2_r,
\end{align*}
given by 
\begin{itemize}
\item
the identity on the first factor
\item any area-preserving embedding on the second factor.
\end{itemize}
Note that this embedding is not exact.
Nevertheless, since the product is flexible by Proposition \ref{prop:products are flexible},
 it satisfies the hypotheses of Theorem \ref{thm:subflex embed}.
Therefore $i$ is isotopic to a Liouville embedding
\begin{align*}
i': (W \times D^*S^1, \la + \la_\can) \hookrightarrow (W \times \D^2_r, \la + \la_\std).
\end{align*}
Moreover, $i'_*\Omega$ extends to a closed two-form $\wt{\Omega}$ on $W \times \D_r^2$.
Note that without changing the cohomology class, we can assume that $\wt{\Omega}$ is pullback of a closed two-form on $W$.
Another application of the Theorem \ref{thm:kunneth}, together with the vanishing of $\sh(\D^2_r,\la_\std)$,
implies that $\sh_{\wt{\Omega}}(W\times \D^2_r,\la + \la_\std)$ vanishes. 
Finally, a well-known argument involving the Viterbo transfer map (see Theorem \ref{thm:transfer} below) shows that ${\sh_{\Omega}(W \times D^*S^1,\la + \la_\can)}$ is trivial.
Namely, the transfer map 
\begin{align*}
\sh_{\Omega}(W \times \D^2_r, \la + \la_\std) \rightarrow \sh_{\Omega}(W \times D^*S^1,\la+\la_\can)
\end{align*}
is a unital ring map, which forces $\sh_{\Omega}(W \times D^*S^1,\la+\la_\can)$ to be the trivial ring with unit equal to zero. 
$\hfill\Box$

\sss

One very useful feature of symplectic cohomology is the Viterbo transfer map. This also holds for twisted symplectic cohomology, provided we take care in how we twist. 
\begin{thm}{\normalfont (\cite{viterbo1999functors}, see also \cite{ritter2013topological})}\label{thm:transfer}
Let $(W_0,\la_0)$ and $(W,\la)$ be Liouville domains, and assume there is a Liouville embedding $i: (W_0,\la_0) \hookrightarrow (W,\la)$.
For $\Omega$ any closed two-form on $W$, there is map of unital $\K$-algebras
\begin{align*}
\sh_{\Omega}(W,\la) \rightarrow \sh_{i^*\Omega}(W_0,\la_0).
\end{align*}
\end{thm}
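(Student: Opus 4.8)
The statement to prove is the twisted Viterbo transfer map: given a Liouville embedding $i \colon (W_0,\la_0) \hookrightarrow (W,\la)$ and a closed two-form $\Omega$ on $W$, there is a unital $\K$-algebra map $\sh_\Omega(W,\la) \to \sh_{i^*\Omega}(W_0,\la_0)$.

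\medskip

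\textbf{Approach.} The plan is to run Viterbo's original construction verbatim and check that the twisting weights are carried along consistently. Recall that the untwisted transfer map is built by choosing, on the completion $\wh W$, a family of admissible Hamiltonians $H_\tau$ which on the subdomain $W_0$ (more precisely on a slightly shrunk copy $e^{-\rho}i(W_0)$, using that $i^*\la - e^\rho \la_0$ is exact) look like the admissible Hamiltonians computing $\sh(W_0,\la_0)$, are very negative and $C^2$-small there, then rise steeply through the region $i(W)\setminus i(W_0)$ to become linear of large slope near infinity. Taking a cofinal sequence of such ``step-shaped'' Hamiltonians and passing to the continuation/direct limit, the sub-complex or appropriate quotient generated by orbits lying in the $W_0$ region (low action) computes $\sh(W_0,\la_0)$, and the restriction map on Floer complexes — induced by an action filtration, since orbits in the transition region have large action — gives the desired chain map. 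First I would import this entire setup. Then I would twist: since $\Omega$ is a globally defined closed two-form on $\wh W$, every Floer cylinder $u$ in $\wh W$ gets the weight $t^{\int u^*\Omega}$, and every Floer cylinder that happens to land in the subdomain gets weight $t^{\int u^*(i^*\Omega)}$ — these agree because $i^*\Omega$ is literally the restriction of $\Omega$. The chain-level maps in Viterbo's construction (continuation maps, the filtration quotient projection) are all defined by counts of the same moduli of Floer cylinders, now weighted; naturality of these weights under gluing is exactly what makes the twisted differential square to zero and the twisted continuation maps chain maps, and this is the content of the ``benignly come along for the ride'' principle already invoked for the Künneth theorem (Theorem \ref{thm:kunneth}).

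\medskip

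\textbf{Key steps, in order.} (1) Fix the Liouville embedding data: choose $\rho$ with $i^*\la - e^\rho\la_0$ exact and work with the honest Liouville embedding $e^{-\rho/2}i$ so that $i(W_0)$ sits strictly inside $W$ with collar room; identify a neighborhood of $i(\bdy W_0)$ with a piece of the symplectization $(\bdy W_0 \times \R, e^s\alpha_0)$. (2) Construct the cofinal family of step-shaped Hamiltonians on $\wh W$ as above, and the almost complex structures cylindrical at infinity and of contact type along $\bdy W_0$, so that the integrated maximum principle confines low-action orbits and the relevant Floer cylinders. (3) Observe that the Floer complex for such a Hamiltonian splits as an action-filtered complex, with the low-action part (orbits in $i(W_0)$) a subquotient computing $\sh(W_0,\la_0)$ — now carry the $\Omega$-weights, noting $\int u^*\Omega = \int u^*(i^*\Omega)$ for cylinders supported in $i(W_0)$. (4) Define the transfer chain map as the projection to this subquotient, check it is a chain map for the twisted differentials (the weight of a broken cylinder is the product of the weights of its pieces, so the twisted composition formula holds), commutes with twisted continuation maps, and is unital (the unit is represented by the minimum of a $C^2$-small Hamiltonian, which lies in $i(W_0)$ by construction) and multiplicative (the pair-of-pants count, again weighted, restricts compatibly). (5) Pass to the direct limit over the cofinal family to obtain $\sh_\Omega(W,\la) \to \sh_{i^*\Omega}(W_0,\la_0)$.

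\medskip

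\textbf{Main obstacle.} The genuinely new content over the untwisted case is essentially bookkeeping: one must be sure the twisting coefficient $t^{\int u^*\Omega}$ is well-defined (no Novikov completion is needed here precisely because we are in the exact setting, as remarked before Theorem \ref{thm:vanishing SH}, so $\int u^*\Omega$ takes finitely many values on each moduli space used) and that it is additive under the breaking/gluing that underlies each structural identity — differential squaring to zero, chain-map property, ring-map property, and naturality under continuation. Since $\Omega$ is closed, $\int u^*\Omega$ depends only on the relative homology class of $u$ rel its asymptotics, and in a degeneration $u \rightsquigarrow u_1 \# u_2$ one has $[u] = [u_1] + [u_2]$, giving $t^{\int u^*\Omega} = t^{\int u_1^*\Omega}\, t^{\int u_2^*\Omega}$; this is the single fact that makes everything go through. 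The only place to be slightly careful is in the transfer map itself: the homotopy between a step-shaped Hamiltonian and the next one in the cofinal family moves orbits across the transition region, but the action filtration is preserved in the limit, and the two-form weights are insensitive to this since $\Omega$ is globally defined on all of $\wh W$. Thus the proof reduces to citing Viterbo's construction \cite{viterbo1999functors} (as streamlined in \cite{ritter2013topological}) and recording that the weights are natural; I would present it at roughly that level of detail rather than reproving the transfer map from scratch.
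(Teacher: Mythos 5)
Your proposal is correct and matches the paper's treatment: the paper offers no independent proof of Theorem \ref{thm:transfer}, citing \cite{viterbo1999functors} and \cite{ritter2013topological} for the untwisted construction and implicitly relying on the same observation you make, namely that the weights $t^{\int u^*\Omega}$ are additive under breaking/gluing and agree with the $i^*\Omega$-weights for cylinders contained in $i(W_0)$, so the step-shaped-Hamiltonian/action-filtration argument goes through verbatim. Your care in twisting $W_0$ by the restriction $i^*\Omega$ rather than an arbitrary form is exactly the point the paper flags after Corollary \ref{coro:transfer}.
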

\begin{coro}\label{coro:transfer}
$\sh(W,\la) = 0$ for any subflexible Weinstein domain $(W,\la,\phi)$.
\end{coro}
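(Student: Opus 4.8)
The plan is to deduce Corollary \ref{coro:transfer} directly from Theorem \ref{thm:vanishing SH} and the transfer map of Theorem \ref{thm:transfer}, taking $\Omega = 0$ throughout. Let $(W,\la,\phi)$ be subflexible. By Definition \ref{def:subflexible}, after a Weinstein deformation equivalence we may assume $(W,\la)$ is a sublevel set of a flexible Weinstein manifold $(\wh V, \wh{\la_V})$; equivalently there is a flexible Weinstein domain $(V,\la_V,\phi_V)$ together with a Liouville embedding $i\colon (W,\la) \hookrightarrow (V,\la_V)$. Since symplectic cohomology is invariant under Weinstein deformation equivalence (exact symplectomorphism of completions), it suffices to compute $\sh(W,\la)$ for this particular model.

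First I would invoke Theorem \ref{thm:transfer} with $\Omega = 0$ to obtain a unital $\K$-algebra map
\begin{align*}
\sh(V,\la_V) \longrightarrow \sh(W,\la).
\end{align*}
Next, since $(V,\la_V,\phi_V)$ is flexible, Theorem \ref{thm:vanishing SH} (with $\Omega = 0$) gives $\sh(V,\la_V) = 0$. A unital ring homomorphism out of the zero ring forces the target to have $1 = 0$, hence $\sh(W,\la) = 0$ as a unital $\K$-algebra. Finally, I would remark that this is exactly the same ``unital ring map'' argument already used at the end of the proof of Theorem \ref{thm:vanishing SH}, so no new ingredient is needed.

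Strictly speaking, one must make sure the Liouville embedding hypothesis of Theorem \ref{thm:transfer} is genuinely available: a sublevel set $V^c = \{\phi_V \le c\}$ of a finite-type Weinstein manifold is itself a Liouville (indeed Weinstein) domain, and the inclusion $V^c \hookrightarrow \wh V$ restricts over any larger sublevel set to a Liouville embedding in the sense of the Remark following Definition \ref{def:subflexible}, since $\la_V$ is globally defined and no rescaling is needed. Combined with deformation-invariance of $\sh$ this handles the general subflexible case. There is essentially no obstacle here; the only mild subtlety is bookkeeping — tracking that ``subflexible'' unwinds to a bona fide Liouville embedding into a flexible domain (not merely its completion), which is immediate from finite type.
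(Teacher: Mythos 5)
Your proposal is correct and matches the paper's intended argument: the paper leaves Corollary \ref{coro:transfer} as an immediate consequence of Theorem \ref{thm:transfer} together with Theorem \ref{thm:vanishing SH}, using exactly the same ``unital ring map from the zero ring'' reasoning that appears at the end of the proof of Theorem \ref{thm:vanishing SH}. Your bookkeeping about unwinding subflexibility to a genuine Liouville embedding into a flexible sublevel set is the right (and only) point needing care, and you handle it correctly.
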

However, we wish to point out that there could be a closed two-form $\Omega_0$ on $W_0$ which makes $\sh_{\Omega_0}(W_0,\la_0)$ nontrivial, but such that $\Omega_0$ does not extend to $W$ as a closed two-form. Indeed, this is precisely what happens for the examples we construct in $\S \ref{sec:subflexibilization}$.

\section{Subflexibilization}\label{sec:subflexibilization}

\subsection{Maydanskiy's manifold revisited}\label{subsec:Maydanskiy}

Before giving the general subflexibilization construction we illustrate the main ideas with an important example.
We consider the two $6$-dimensional Weinstein domains $X_1^6$ and $X_2^6$ from \cite{harris2012distinguishing}. 
Each is diffeomorphic to $D^*S^3 \bcs (D^*S^2 \times \D^2)$ and is represented by a Weinstein Lefschetz presentation.
Here $M_1 \bcs M_2$ denotes the {\em boundary connect sum} of two equidimensional Weinstein domains $M_1$ and $M_2$, i.e. the result of attaching a Weinstein $1$-handle to the disjoint union of $M_1$ and $M_2$ with one endpoint on $\bdy M_1$ and the other on $\bdy M_2$.
In the case of $X_1$, the fiber of the Lefschetz fibration is the Milnor fiber $A_2^4$, i.e. a plumbing of two copies of $D^*S^2$. 
The fiber $A_2^4$ itself admits a secondary Lefschetz fibration with fiber $D^*S^1$ and three vanishing cycles. 
The Lefschetz presentation for $X_1$ has two vanishing cycles $V_1,V_2 \subset A_2^4$, given by the matching cycle construction with the matching paths in the top left of Figure \ref{X1andX2}. 
Similarly, the Lefschetz presentation for $X_2$ has fiber $A_4^4$ (a plumbing of four copies of $D^*S^2$), which admits a secondary Lefschetz fibration with fiber $D^*S^1$ and five vanishing cycles.
$X_2$ has four vanishing cycles $W_1,W_2,W_3,W_4 \subset A_4^4$ specified by the matching paths in the bottom left of Figure \ref{X1andX2}. 
We refer the reader to \cite{maydanskiy2009,harris2012distinguishing} for more details.

\begin{figure}
 \centering
 \includegraphics[scale=.8]{./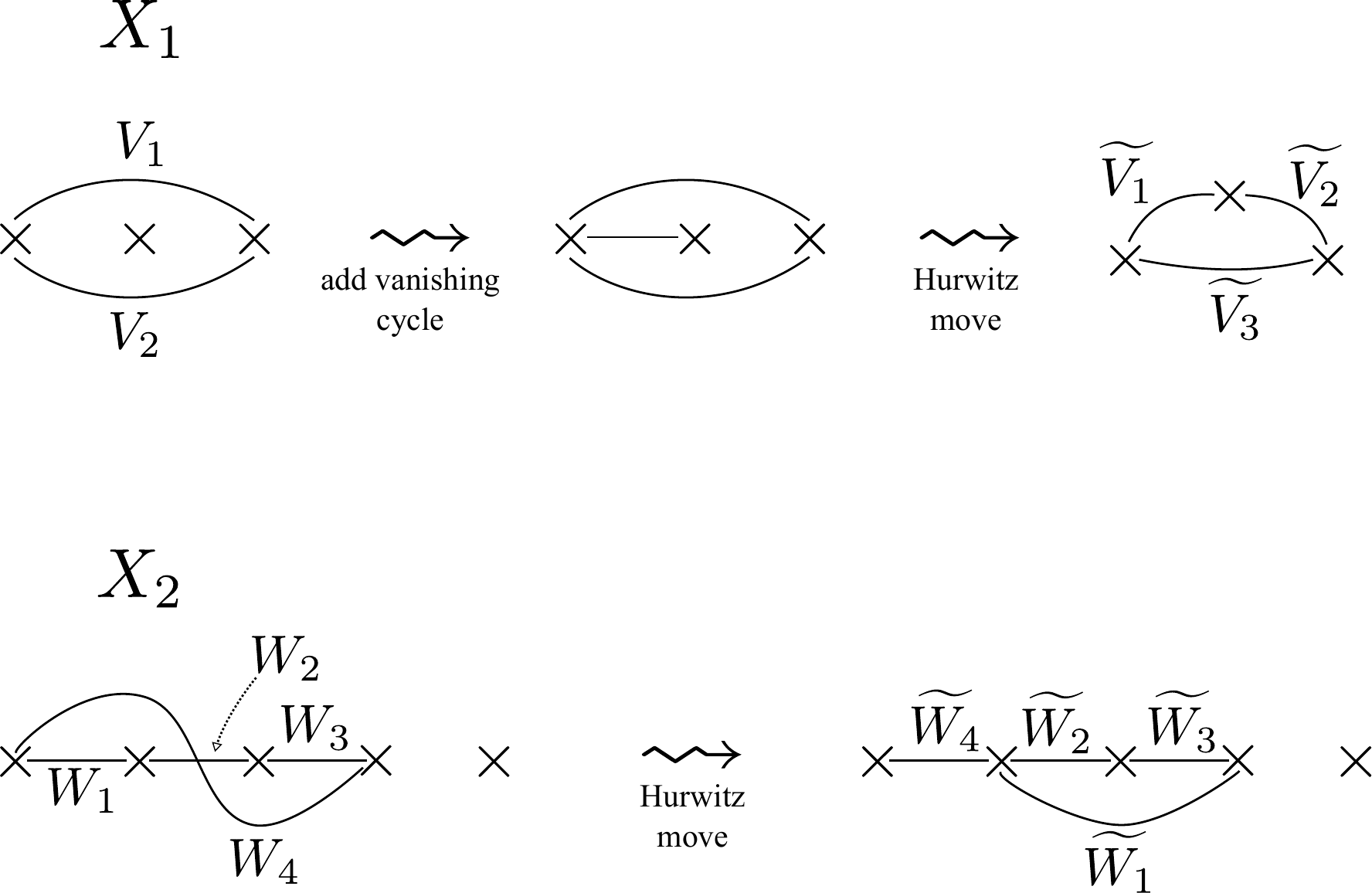}
 \caption{Top: $X_1$ becomes flexible after adding a vanishing cycle and applying a Hurwitz move. Bottom: $X_2$ becomes flexible after applying a Hurwitz move.}
 \label{X1andX2}
\end{figure}  

The manifold $X_1$ was first considered by Maydanskiy in \cite{maydanskiy2009}, where he applies Seidel's long exact sequence to show that $X_1$ has vanishing wrapped Fukaya category (and hence vanishing symplectic cohomology). This also follows from the fact that $X_1$ is {\em subflexible}, which can be seen as follows. Adding a vanishing cycle to  $X_1$ and applying a Hurwitz move according to the top of Figure \ref{X1andX2}, we arrive at the Weinstein domain 	
$ \fW(A_2^4;\wt{V_1},\wt{V_2},\tau_{\wt{V_2}}\wt{V_1})$, which we show is flexible below 
(Theorem \ref{subflexthm}).  
Incidentally, $\fW(A_2^4;\wt{V_1},\wt{V_2},\tau_{\wt{V_2}}\wt{V_1})$ is precisely the exotic $D^*S^3$ illustrated in \cite[Figure 2]{maydanskiyseidel},
which gives another way of seeing that its symplectic cohomology vanishes.

On the other hand, $X_2$ is actually flexible. To see this, apply the Hurwitz move illustrated in the bottom of Figure \ref{X1andX2}.
The result is Weinstein deformation equivalent to the same flexible $D^*S^3$ as before, with an extra pair of canceling handles ($\wt{W_4}$ is in canceling position) and an extra subcritical handle attached.

The main theorem of \cite{harris2012distinguishing} states that $X_1$ and $X_2$ are not symplectomorphic. Harris argues that an embedded Lagrangian sphere appears in $X_1$ after an arbitrarily small deformation of the symplectic form, whereas no such Lagrangian can appear for $X_2$.
From the perspective of this paper, $X_1$ and $X_2$ can be distinguished by the fact that $X_1$ has nontrivial twisted symplectic cohomology by Theorem \ref{thm:nontrivial twSH}, whereas $X_2$ is flexible and thus has trivial twisted symplectic cohomology. 

The fact that $X_1$ has nontrivial twisted symplectic cohomology reflects the symplectomorphism type of $X_1$ after a small deformation of its symplectic form.
Namely, notice that $X_1$ can be viewed as $\fW(A_2^4;S_1,\tau_{S_2}^2S_1)$, where $S_1,S_2 \subset A_2^4$ represent the plumbed zero sections.
According to Seidel \cite{seidel1997floer}, a squared Dehn twist around a two-dimension sphere is a {\em fragile} 
symplectomorphism, and hence we expect $\tau_{S_2}^2S_1$ to be Hamiltonian isotopic to $S_1$ in the presence of an ambient deformation of the symplectic form.
Since $\fW(A_2^4;S_1,S_1)$ is just $D^*S^3$ with an extra subcritical $2$-handle,
this should imply that after deformation $X_1$ becomes symplectomorphic to a rather trivial deformation of the standard Weinstein structure on $D^*S^3 \bcs (D^*S^2 \times \D^2)$ (this is made precise in Theorem \ref{deformationthm}). By the standard tools in symplectic cohomology we might expect the symplectic cohomology of the latter space to agree with homology of the free loop space of $S^3$.

However, we point out that there are technical difficulties in making sense of the symplectic cohomology of $X_1$ and $X_2$ after deformations. 
This is because $X_1$ and $X_2$ become non-convex symplectic manifolds, and the standard definition and maximum principle properties of symplectic cohomology do not apply to such manifolds. On the other hand, twisted symplectic cohomology for Liouville domains is well-defined and poses no serious technical difficulties beyond those of ordinary symplectic cohomology (although computations are still in general quite difficult).

\subsection{The Subflexibilization construction}\label{subsec:subflex}

Let $X^{2n+2}$ be a Weinstein domain with a Lefschetz presentation satisfying Assumption \ref{mainassumption}.
We explain how to use the Lefschetz presentation to modify $X$ and obtain a new Weinstein domain $\sbf(X)$.

\begin{construction}\label{mainconstruction}
Assume $X^{2n+2}$ has a Weinstein Lefschetz presentation with Weinstein fiber $M^{2n}$, vanishing cycles $V_1,...,V_k \subset M$, and Lagrangian disks $T_1,...,T_k \subset M$ as in Assumption \ref{mainassumption}.
$\sbf(X)$ is defined to have a Weinstein Lefschetz presentation with
\begin{itemize}
\item fiber given by attaching a Weinstein $n$-handle $H_i$ to $M$ along $\bdy T_i$, for each $1 \leq i \leq k$
\item vanishing cycles $\tau_{S_1}^2V_1,\tau_{S_2}^2V_2,...,\tau_{S_k}^2V_k$, where $S_i$ is the Lagrangian sphere given by the union of $T_i$ and the core of $H_i$.
\end{itemize}
We note that $\sbf(X)$ depends on the Lefschetz presentation of $X$, but we suppress this from the notation.
\end{construction}
In the case $\dim X = 6$, let $X_+^6$ denote the boundary connect sum of $X$ with $k$ copies of the subcritical Weinstein domain $D^*S^2 \times \D^2$.
In fact, as explained in \cite{maydanskiyseidelcorrigendum}, $\tau_{S_i}^2V_i$ and $V_i$ are smoothly isotopic through totally real submanifolds (for some choice of compatible almost complex structure) and this implies that the $\sbf(X^6)$ is almost symplectomorphic to 
$\fW(M^4\cup H_1 \cup ... \cup H_k;V_1,...,V_k)$. The handles $H_1,...,H_k$ are attached away from the vanishing cycles and the latter space can be identified with $X_+^6$ (recall that the attaching sphere of each $H_i$ bounds an embedded Lagrangian disk in $M^4$).

\begin{remark}\label{rem:almost symp type}
For $n>2$ the diffeomorphism type of $\sbf(X^{2n+2})$ is not just the straightforward analogue of $X_+^6$.
For $n$ odd, the intersection form on $H_{n+1}(\sbf(X^{2n+2}))$ is different that the one on $H_{n+1}(X^{2n+2})$. This reflects the fact, easily observed by the Picard--Lefschetz formula, that squared Dehn twists around odd dimensional spheres typically act nontrivially on homology.
When $n$ is even and greater than two, the subflexibilization process changes the diffeomorphism type of $X$ in a more subtle way (see \cite{maydanskiyseidelcorrigendum} for more details). 
Since we only need to understand the case $n=2$ for our present applications, we leave it to the reader to work out the topological details in higher dimensions.
\end{remark}

\begin{thm}\label{subflexthm}
For any $X^{2n+2}$ satisfying Assumption \ref{mainassumption}, $\sbf(X)$ is subflexible.
More specifically, when $n=2$ $\sbf(X^6)$ becomes Weinstein deformation equivalent to $\op{Flex}(X^6)$ after attaching $k$ Weinstein $3$-handles.
\end{thm}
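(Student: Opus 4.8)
The plan is to prove the more precise statement in dimension six first, namely that attaching $k$ Weinstein $3$-handles to $\sbf(X^6)$ yields something Weinstein deformation equivalent to $\flex(X^6)$; subflexibility for general $n$ then follows by an essentially identical Hurwitz-move argument. The key principle I would exploit is the one recorded in Definition \ref{def:stabilization} and the remark following it: attaching a Weinstein handle along the boundary of a stabilizing disk, together with adding the corresponding matching sphere as a new vanishing cycle, is a stabilization and hence does not change the Weinstein deformation type of the total space. So the strategy is to realize the extra $k$ handles as stabilizations, then manipulate the resulting Lefschetz presentation by Hurwitz moves until the attaching spheres of all index $n+1$ critical points are manifestly loose.

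First I would set up notation: start from the Lefschetz presentation $\fW(M; \tau_{S_1}^2 V_1, \ldots, \tau_{S_k}^2 V_k)$ for $\sbf(X)$, where $M$ is the fiber after attaching the $H_i$ and $S_i = T_i \cup (\text{core of } H_i)$ as in Construction \ref{mainconstruction}. Attaching a Weinstein $3$-handle to the total space along the belt sphere of $H_i$ (equivalently, along $\dd T_i$ viewed in the appropriate fiber), and simultaneously inserting $S_i$ as a new vanishing cycle, is exactly the stabilization of Definition \ref{def:stabilization} along the stabilizing disk $T_i$; by the remark this leaves the total space Weinstein deformation equivalent. Doing this for every $i$, $\sbf(X^6)$ plus $k$ three-handles becomes $\fW(M; \tau_{S_1}^2 V_1, S_1, \tau_{S_2}^2 V_2, S_2, \ldots, \tau_{S_k}^2 V_k, S_k)$ after suitably interleaving the new vanishing cycles — here the cyclic ordering must be arranged so each $S_i$ sits immediately next to $\tau_{S_i}^2 V_i$. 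Next I would apply Hurwitz moves: since $(\tau_{S_i}^2 V_i, S_i) \rightsquigarrow (\tau_{S_i} V_i, \tau_{S_i}^2 V_i)$ is wrong in general, the correct computation is that $\tau_{S_i}$ applied twice can be peeled off by two Hurwitz moves against $S_i$ — concretely, $(\tau_{S_i}^2 V_i, S_i) \rightsquigarrow (\tau_{S_i} V_i, S_i) \rightsquigarrow (V_i, S_i)$, using that a Hurwitz move past $S_i$ precisely removes one power of $\tau_{S_i}$. After doing this for all $i$ (keeping track that the moves are local to consecutive pairs and that reordering the $S_i$ among themselves is harmless since they are attached in a subcritical region away from the $V_i$), we arrive at $\fW(M; V_1, S_1, \ldots, V_k, S_k)$, which is precisely $\fW(M; V_1, \ldots, V_k)$ with $k$ extra canceling/subcritical handles — and this total space is, by the discussion in \S\ref{subsec:subflex}, the boundary connect sum $X_+^6$ up to the subcritical handles, hence $\fW(M; V_1, \ldots, V_k, S_1, \ldots, S_k)$.

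The final step is to identify this with $\flex(X^6)$. Here I would use the two-dimensional fiber $M^4$: since $M$ is four-dimensional, every Lagrangian sphere in $M$, and in particular every $V_i$ and $S_i$, has the property that its Legendrian lift, when included as the attaching sphere of an index $3$ critical handle, is automatically loose in the appropriate contact hypersurface — this is the special low-dimensional coincidence that powers the whole construction (a two-sphere of vanishing cycles, once stabilized by the extra handle, admits a loose chart; this is where Harris's and Maydanskiy's geometry enters). More carefully, I would argue that after the Hurwitz moves above the attaching Legendrians of all the critical handles form a loose link in their level set, invoking Remark \ref{remark:explicitflexcrit}(3), so the resulting Weinstein structure is explicitly flexible; and since it is almost symplectomorphic to $X^6$ (the diffeomorphism type being $X_+^6$ up to subcritical handles, with the homotopy class of the symplectic form unchanged because squared Dehn twists around $2$-spheres act trivially enough — cf. the discussion before Remark \ref{rem:almost symp type}), Theorem \ref{thm:CE flex}(2)–(3) forces it to be Weinstein deformation equivalent to $\flex(X^6)$. \textbf{The main obstacle} I anticipate is the looseness verification: showing that the attaching spheres genuinely form a loose \emph{link} (not merely that each is individually loose) in the correct contact boundary after the handle attachments and Hurwitz moves — this requires carefully choosing the stabilizing disks $T_i$ and the arcs carrying the Legendrian lifts so that the loose charts are disjoint, much as in the disjointness bookkeeping at the end of the proof of Proposition \ref{prop:products are flexible}. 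The second delicate point is tracking the parametrizations of the vanishing cycles through the Hurwitz moves, since Definition \ref{def:stabilization} and the Picard--Lefschetz formula are sensitive to these, but this is bookkeeping rather than a genuine difficulty.
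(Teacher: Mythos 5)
There is a genuine gap, and it occurs at the two places where your argument actually has to do work. First, the Hurwitz computation $(\tau_{S_i}^2 V_i, S_i) \rightsquigarrow (\tau_{S_i} V_i, S_i) \rightsquigarrow (V_i, S_i)$ is not a sequence of Hurwitz moves: a Hurwitz move always \emph{transposes} the two adjacent vanishing cycles while twisting one of them, so the only available move on this pair gives $(\tau_{S_i}^2 V_i, S_i) \rightsquigarrow (S_i, \tau_{S_i}^{-1}\tau_{S_i}^2 V_i) = (S_i, \tau_{S_i} V_i)$, and attempting to strip the remaining power of $\tau_{S_i}$ by a second move simply returns you to $(\tau_{S_i}^2 V_i, S_i)$. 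One power of the Dehn twist is irremovable, and this is not bookkeeping: if your reduction to $\fW(M; V_1, S_1, \ldots, V_k, S_k)$ were valid, then $\sbf(X)$ plus the $k$ handles would be deformation equivalent to $X$ itself (that presentation is just $X$ stabilized $k$ times), and combined with the theorem's actual conclusion this would force $X \simeq \flex(X)$ for every $X^6$, which is false (e.g.\ for the Abouzaid--Seidel variety $U^6$ with $\sh(U) \neq 0$). Second, your looseness step --- that every Lagrangian sphere in a four-dimensional fiber automatically lifts to a loose Legendrian --- is also false, and would again imply that every $6$-dimensional Weinstein domain admitting a Lefschetz presentation is flexible (e.g.\ $T^*S^3 = \fW(T^*S^2; Z, Z)$ would be flexible). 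There is no such low-dimensional coincidence.

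The paper's proof keeps exactly one Dehn twist: after the single Hurwitz move per pair one has $\fW(M \cup H_1 \cup \cdots \cup H_k; S_1, \tau_{S_1}V_1, \ldots, S_k, \tau_{S_k}V_k)$, and the whole point is Proposition \ref{prop:cmp}: the Legendrian lift of $\tau_{S_i}V_i$ is loose precisely \emph{because} $S_i$ is the matching sphere of a stabilization and $\tau_{S_i}V_i$ carries that residual twist. This holds in all dimensions, which is why subflexibility is not a dimension-six phenomenon; only the identification of the flexible total space with $\flex(X^6)$ (via almost symplectomorphism and Theorem \ref{thm:CE flex}) uses $n=2$, through the fact that $\tau_{S_i}^2 V_i$ and $V_i$ are isotopic through totally real submanifolds. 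Finally, your anticipated obstacle about the attaching spheres forming a loose \emph{link} is dissolved in the paper by the inductive structure of the argument: the handles are attached one elementary cobordism at a time, each new attaching sphere being loose in the boundary of an already-flexible domain, so no simultaneous loose-link verification in a single level set is needed.
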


The main tool we will use is a proposition from \cite{casals2015geometric} (the proposition there is phrased in terms of open books; boundaries of Lefschetz fibrations are simply a special case of this).

\begin{prop}\label{prop:cmp}

Let $\fW(M,\theta,\psi;V_1,\ldots,V_k)$ be a Lefschetz fibration, let $L \sse M$ be an exact Lagrangian sphere, and let $T \sse M$ be a stabilizing Lagrangian disk which intersects $L$ transversely at one point. Let $H$ be a Weinstein handle attached to $M$ along $\bdy T$,
and let $S$ be the Lagrangian sphere given by the union of $T$ and the core disk of $H$.
Then the Legendrian lift of $\tau_SL$ from the fiber $\pi^{-1}(1)$ of the stabilized Lefschetz presentation $\fW(M \cup H,\theta,\psi;V_1,\ldots,V_k, S)$ is loose in the boundary of the total space. In particular, if the Weinstein domain defined by $\fW(M,\theta,\psi;V_1,\ldots,V_k)$ was flexible, then the Weinstein domain defined by $\fW(M \cup H,\theta,\psi;V_1,\ldots,V_k, S, \tau_SL)$ is flexible as well.

\end{prop}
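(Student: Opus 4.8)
\textbf{Proof proposal for Proposition \ref{prop:cmp}.}

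The plan is to exhibit a loose chart for the Legendrian lift $\Lambda(\tau_SL) \subset \bdy\big(\fW(M\cup H,\theta,\psi;V_1,\ldots,V_k,S)\big)$, and then deduce the flexibility statement formally. First I would recall the local model near the belt sphere of the handle $H$: after the stabilization, $S$ is the matching cycle obtained by gluing the stabilizing disk $T$ to the core of $H$, and the belt sphere $\bs(H)$ of $H$ is precisely the attaching sphere of the new index-$(n+1)$ handle corresponding to the vanishing cycle $S$. The key geometric input is that $T$ meets $L$ transversely in a single point, so in a Weinstein neighborhood of $S$, which looks like $D^*S^n$, the sphere $\tau_S L$ agrees with $L$ outside a neighborhood of that intersection point and is, inside that neighborhood, the graph of the differential of a function whose front projection develops exactly one zig-zag (one extra cusp pair and one transverse double point) --- this is the standard picture that a Dehn twist around $S$ produces a "stabilization" of $L$ in the sense of loose Legendrians. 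I would make this precise by passing to the Legendrian lift in $\bdy(M\cup H)\times S^1$-type coordinates as set up in \S\ref{sec:Lefschetz structures}, where the lift $\Lambda(\tau_S L)$ can be compared with $\Lambda(L)$.

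The heart of the argument, and the step I expect to be the main obstacle, is producing a genuine \emph{loose chart} --- i.e. a contact embedding of $(B^3_\std\times V_\rho,\gamma_a\times Z_\rho)$ with $a/\rho^2<2$ --- rather than merely a stabilization. The presence of the handle $H$ gives the extra room in the normal directions: because we have attached a full $n$-handle along $\bdy T$, the Legendrian $\Lambda(\tau_S L)$ sits in a region where there is an arbitrarily "wide" isotropic box available, and one can apply the mechanism of Lemma \ref{lemma:wide nbd} (multiplying a loose chart, here a one-dimensional zig-zag arc, by a wide box $Z_r^n$) to upgrade the single zig-zag coming from the Picard--Lefschetz picture into an actual loose chart with the required action-to-size ratio. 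Concretely, I would: (i) localize near the point $T\cap L$ to reduce to the model $(B^3_\std,\gamma_a)\times V_\rho$ inside $D^*S^n$; (ii) use the handle $H$ to enlarge the $V_\rho$ factor (rescaling the contact form via the Liouville/Reeb flow, exactly as in the proof of Lemma \ref{lemma:looseproductwithsymplectization}) so that $\rho$ becomes arbitrarily large while $a$ stays bounded; (iii) conclude $a/\rho^2<2$. The delicate point is checking that this chart lies in the complement of the other vanishing cycles' Legendrian lifts $\Lambda(V_1),\ldots,\Lambda(V_k),\Lambda(S)$; this follows because the disk $T$ (hence the region we work in) can be taken to meet those cycles and $\bdy M$ in a controlled way, and the arcs $\Delta_i\subset S^1$ carrying the lifts are disjoint by construction.

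Finally, the flexibility conclusion is formal. By Remark \ref{remark:explicitflexcrit}(3), a Weinstein structure is (explicitly) flexible once the attaching Legendrian of each index-$(n+1)$ critical point is loose in the appropriate sublevel contact hypersurface (considered together as a link for coincident critical values). The stabilization $\fW(M\cup H,\theta,\psi;V_1,\ldots,V_k,S)$ is Weinstein deformation equivalent to $\fW(M,\theta,\psi;V_1,\ldots,V_k)$, which we assumed flexible, so after a Weinstein homotopy its critical attaching spheres form a loose link; we may arrange the new critical point associated to $\tau_S L$ to sit at the top of the Morse function, with attaching sphere $\Lambda(\tau_S L)$, which we have just shown admits a loose chart disjoint from everything below it. Hence the enlarged presentation $\fW(M\cup H,\theta,\psi;V_1,\ldots,V_k,S,\tau_S L)$ satisfies the flexibility criterion of Definition \ref{def:flexible}, proving the proposition.
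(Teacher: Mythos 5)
First, a point of comparison: the paper does not prove this proposition at all --- it is imported from Casals--Murphy--Presas \cite{casals2015geometric}, where it is stated for open books (boundaries of Lefschetz fibrations being a special case). So your attempt has to be judged on its own terms, and judged that way it has a genuine gap at exactly the step you flag as the heart of the matter: the source of the zig-zag.

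You claim the zig-zag is already visible in the local picture of the Dehn twist in the fiber (``$\tau_SL$ is the graph of the differential of a function whose front projection develops exactly one zig-zag''), and that the handle $H$ then supplies the wide box needed to invoke Lemma \ref{lemma:wide nbd}. But $\tau_SL$ is a smoothly embedded exact Lagrangian sphere in $M\cup H$, and its Legendrian lift is a smoothly embedded Legendrian with no distinguished cusp locus; nothing happening purely inside the fiber produces a stabilization. The mechanism that actually creates the zig-zag is the hypothesis that $S$ occurs \emph{as a vanishing cycle}: one isotopes the lift of $\tau_SL$ around the base, past the critical value corresponding to $S$, where the Picard--Lefschetz monodromy undoes the twist at the cost of introducing a Legendrian stabilization over the disk $L\cap \Op(S)$ (a disk precisely because $T$ meets $L$ once), and stabilized Legendrians in contact dimension at least five are loose. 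Your argument never uses the presence of $S$ in the list of vanishing cycles, only the presence of the handle $H$ --- and that cannot suffice, because the paper's own construction is the counterexample: the fiber of $\sbf(X)$ has all the handles $H_i$ attached and has $\tau_{S_i}^2V_i$ as vanishing cycles, yet $\sbf(X)$ is in general \emph{not} flexible, whereas $\wt{X}$, which differs only by inserting the $S_i$ into the list of vanishing cycles, is flexible. Relatedly, Lemma \ref{lemma:wide nbd} cannot ``upgrade a single zig-zag into a loose chart'': its input is a Legendrian that is already loose in a lower-dimensional contact manifold, which is precisely what you have not yet produced. (A smaller confusion: the belt sphere of the $n$-handle $H$ is an $(n-1)$-sphere in $\bdy(M\cup H)$ and is not the attaching sphere of the critical handle for $S$; the latter is the Legendrian lift $\La(S)$ in the boundary of the total space.) Your final paragraph, deducing flexibility of the enlarged presentation from looseness of the lift, is fine once the looseness is actually established.
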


\begin{proof}[Proof of Theorem \ref{subflexthm}:]
If $X = \fW(M,\theta,\psi;V_1,\ldots,V_k)$, then $$\sbf(X) = \fW(M \cup H_1 \cup \ldots \cup H_k,\theta,\psi;\tau_{S_1}^2V_1,\ldots,\tau_{S_k}^2V_k),$$ where $S_i$ is the Lagrangian sphere given as the union of $T_i$ and the core of the handle $H_i$ (and we continue to use $(\theta, \psi)$ to denote the Weinstein structure on $M \cup H_1 \cup \ldots \cup H_k$). We attach handles to $\sbf(X)$ to define a new Weinstein manifold
$$\wt{X} = \fW(M \cup H_1 \cup \ldots \cup H_k,\theta,\psi;\tau_{S_1}^2V_1,S_1,\tau_{S_2}^2V_2,S_2,\ldots,\tau_{S_k}^2V_k, S_k).$$
We will show that $\wt{X}$ is flexible in any dimension. 
In the case $n=2$, $\wt{X}$ is almost symplectomorphic to 
$$\fW(M \cup H_1 \cup \ldots \cup H_k,\theta,\psi;V_1,S_1,V_2,S_2,\ldots,V_k, S_k),$$
which is just the original manifold $X$ (after stabilizing its Lefschetz fibration $k$ times).
Therefore $\wt{X}$ is almost symplectomorphic to $\op{Flex}(X)$, so once we show that $\wt{X}$ is flexible it follows that it is Weinstein deformation equivalent to $\op{Flex}(X)$ (by Theorem \ref{thm:CE flex}).

\begin{sloppypar}
By applying $k$ Hurwitz moves, we get
$$\wt{X} = \fW(M \cup H_1 \cup \ldots \cup H_k,\theta,\psi;S_1, \tau_{S_1}V_1,\ldots,S_k,\tau_{S_k}V_k).$$
The manifold $\wt{X}_0 := \fW(M, \theta,\psi;\es)$ is subcritical, so in particular it is flexible. ${\fW(M \cup H_1, \theta,\psi; S_1)}$, being a stabilization of the previous Lefschetz fibration, is another Lefschetz presentation of $\wt{X}_0$ (though not explicitly subcritical). ${\wt{X}_1 := \fW(M \cup H_1,\theta,\psi;S_1, \tau_{S_1}V_1)}$ is flexible, because it is built from $X_0$ by attaching a handle to the loose Legendrian $\tau_{S_1}V_1$.
Then $\fW(M \cup H_1 \cup H_2,\theta,\psi;S_1, \tau_{S_1}V_1,S_2)$, being a stabilization of the previous Lefschetz presentation, is Weinstein equivalent to $\wt{X}_1$.  Next $\wt{X}_2 = \fW(M \cup H_1 \cup H_2,\theta,\psi;S_1, \tau_{S_1}V_1,S_2,\tau_{S_2}V_2)$ is flexible, because it is built from the flexible Weinstein manifold $\wt{X}_1$ by attaching a handle along the loose Legendrian sphere $\tau_{S_2}V_2$. Continuing in this way, we see that $\wt{X} = \wt{X}_k$ is flexible.
\end{sloppypar}
\end{proof}

On the other hand, at least when $n=2$, $\sbf(X^6)$ is often not flexible.
Indeed, by Theorem \ref{thm:vanishing SH} it suffices to show that $\sbf(X^6)$ has nontrivial twisted symplectic cohomology. In \cite{Siegel} the following result is proved for any $X$ satisfying Assumption \ref{mainassumption}:
\begin{thm}\label{thm:nontrivial twSH}
There exists a closed two-form $\Omega$ on $X^6$ such that $\sh_{\Omega}(\sbf(X^6)) \cong \sh(X^6)$. 
\end{thm}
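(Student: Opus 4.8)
My strategy would be to compare $\sbf(X^6)$ with a second, ``undeformed'' model of the same underlying almost symplectic manifold, and to show that the $\Omega$-twisting exactly collapses the difference. Write $\sbf(X^6) = \fW(M \cup H_1 \cup \ldots \cup H_k, \theta, \psi; \tau_{S_1}^2 V_1, \ldots, \tau_{S_k}^2 V_k)$ as in Construction \ref{mainconstruction}, and set $X_+^6 := \fW(M \cup H_1 \cup \ldots \cup H_k, \theta, \psi; V_1, \ldots, V_k)$; as recalled just after Construction \ref{mainconstruction}, $X_+^6$ is Weinstein deformation equivalent to $X^6 \bcs (D^*S^2 \times \D^2)^{\bcs k}$ and is almost symplectomorphic to $\sbf(X^6)$. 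Fixing such an almost-symplectomorphism identifies $H^2(\sbf(X^6);\R)$ with $H^2(X^6;\R) \oplus \R^k$, the $\R^k$-summand pairing dually with the classes $[S_1], \ldots, [S_k]$, and I would choose $\Omega$ to be a closed two-form on $\sbf(X^6)$ (one can arrange it supported near $S_1 \cup \ldots \cup S_k$) whose cohomology class lies in this $\R^k$-summand and pairs nontrivially with every $[S_i]$; in particular, viewed on $X_+^6$, it restricts to a nullcohomologous form on the subdomain $X^6$. The plan then splits into (a) showing $\sh_\Omega$ does not detect the $\tau_{S_i}^2$, so that $\sh_\Omega(\sbf(X^6)) \cong \sh_\Omega(X_+^6)$, and (b) computing $\sh_\Omega(X_+^6) \cong \sh(X^6)$.

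Part (b) is routine. A closed two-form on $D^*S^2 \times \D^2$ is cohomologous to the pullback of one on $D^*S^2$, so the twisted K\"unneth theorem (Theorem \ref{thm:kunneth}) together with $\sh(\D^2,\la_\std) = 0$ gives $\sh_\Omega(D^*S^2 \times \D^2) = 0$ for every closed $\Omega$. Since $X_+^6$ is built from the disjoint union of $X^6$ with $k$ copies of $D^*S^2 \times \D^2$ by attaching subcritical $1$-handles, and attaching a subcritical handle does not change (twisted) symplectic cohomology --- the relevant orbit cancellation carries the $t$-weights along, the twisting being trivial near the handle --- one obtains $\sh_\Omega(X_+^6) \cong \sh_{\Omega|_{X^6}}(X^6) \oplus \bigoplus_{i=1}^k \sh_{\Omega|_{D^*S^2 \times \D^2}}(D^*S^2 \times \D^2) = \sh_{\Omega|_{X^6}}(X^6)$. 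Finally $[\Omega|_{X^6}] = 0$ and twisted symplectic cohomology depends only on the cohomology class of the twisting form, so $\sh_\Omega(X_+^6) \cong \sh(X^6)$.

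Part (a) is the heart, and is where \cite{Siegel} does the real work. I would first set up a chain-level model computing $\sh_\Omega$ of the total space of a Weinstein Lefschetz fibration $\fW(N, \theta, \psi; W_1, \ldots, W_k)$ from the $\Omega$-twisted Floer data of the fiber $N$ and the parametrized vanishing cycles $W_1, \ldots, W_k$ --- the twisted-coefficient extension of the known description of symplectic cohomology of a Lefschetz fibration in terms of its fiber and vanishing cycles, with the weights $t^{\int u^*\Omega}$ riding along in the curve counts exactly as the twisting forms do in Oancea's K\"unneth argument \cite{oancea2006kunneth}. (The matching-type refinement of Assumption \ref{mainassumption} is presumably what makes the relevant Lagrangians and Dehn twists explicit enough for this.) The property demanded of the model is functoriality: its output must depend on $(W_1, \ldots, W_k)$ only through Hamiltonian isotopies, Hurwitz moves, and --- in the twisted setting --- symplectic isotopies of the $W_i$ that become trivial once the fiber's symplectic form is deformed by $\Omega$. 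Granting this, one invokes Seidel's fragility theorem \cite{seidel1997floer}: for a Lagrangian two-sphere $S$ in a four-manifold, $\tau_S^2$ is not symplectically isotopic to the identity, but becomes so after an arbitrarily small non-exact deformation of the symplectic form with nonzero period on $[S]$. Since $[\Omega]$ pairs nontrivially with $[S_i]$, the symplectomorphism $\tau_{S_i}^2$ of the fiber is trivial in precisely the sense to which the twisted model is blind, so replacing $\tau_{S_i}^2 V_i$ by $V_i$ one index at a time changes nothing, yielding $\sh_\Omega(\sbf(X^6)) \cong \sh_\Omega(X_+^6)$. (Concretely, \cite{Siegel} presumably realizes this via a spectral sequence whose higher differentials are governed by the groups $HF^*(S_i, V_i)$ and are forced to cancel by the $\Omega$-weights; the fragility statement is the conceptual content.)

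I expect the main obstacle to be making Part (a) rigorous, specifically two things: (i) extending the Lefschetz-fibration computation of symplectic cohomology to $\Omega$-twisted coefficients while keeping its dependence on the vanishing cycles manifestly functorial; and (ii) reconciling ``twist the Floer differential by $t^{\int u^*\Omega}$'' with ``deform the fiber's symplectic form by $s\Omega$'' at the level of the relevant moduli spaces, so that Seidel's fragility isotopy --- a feature of the genuinely deformed form --- can legitimately be fed into the twisted-coefficient complex, uniformly across its iterated-cone structure. Controlling compactness and transversality for that comparison, and checking that the support of $\Omega$ near the $S_i$ is compatible with the vanishing cycles passing through them, is the delicate technical core; the remaining ingredients (the twisted K\"unneth theorem, vanishing of $\sh_\Omega$ for subcritical pieces, invariance under Hurwitz moves) are either established in the excerpt or standard.
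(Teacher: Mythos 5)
Your outline matches the paper's own treatment: the paper does not prove Theorem \ref{thm:nontrivial twSH} here at all but imports it from \cite{Siegel}, and the only justification it offers --- the remark following Theorem \ref{deformationthm} --- is exactly your strategy of comparing $\sbf(X^6)$ with $X^6_+$ via fragility of squared Dehn twists \cite{seidel1997floer} and then stripping off the subcritical pieces using the twisted K\"unneth theorem (Theorem \ref{thm:kunneth}) and Proposition \ref{prop:subcrit handle attach}. Your part (b) is sound (modulo the $c_1=0$ hypothesis needed for Proposition \ref{prop:subcrit handle attach}), and your part (a) --- the twisted Lefschetz-fibration model in which the $\Omega$-weights render $\tau_{S_i}^2$ invisible --- is precisely the content the paper defers to \cite{Siegel}, so like the paper you have a correct reduction but not a self-contained proof of the crucial step.
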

\begin{coro}
$\sbf(X^6)$ is not flexible if $\sh(X^6) \neq 0$.
\end{coro}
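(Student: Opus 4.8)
The plan is to deduce the corollary by contraposition from the two results just cited, with essentially no new work beyond checking that the twisting class is of the admissible type. Concretely, I would argue as follows. Suppose $\sbf(X^6)$ were flexible. Then Theorem \ref{thm:vanishing SH}, applied directly to the flexible Weinstein domain $\sbf(X^6)$, gives $\sh_\Omega(\sbf(X^6)) = 0$ for \emph{every} closed two-form $\Omega$ on $\sbf(X^6)$; in particular this holds for the specific class furnished by Theorem \ref{thm:nontrivial twSH}. But that theorem identifies $\sh_\Omega(\sbf(X^6))$ with $\sh(X^6)$, which is nonzero by hypothesis, a contradiction. Hence $\sbf(X^6)$ cannot be flexible.

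The one point I would take care to spell out is that the two-form $\Omega$ appearing in Theorem \ref{thm:nontrivial twSH}, although described as living on $X^6$, really does define a twisted symplectic cohomology of the Weinstein domain $\sbf(X^6)$ in the sense of \S\ref{sec: holomorphic curve invariants}. This follows by tracing through Construction \ref{mainconstruction}: $\sbf(X^6)$ is the total space of an explicit Weinstein Lefschetz fibration, and by the last bullet of the list in \S\ref{sec:Lefschetz structures} a closed two-form on the fiber supported away from the (new) vanishing cycles extends to a closed two-form $\widetilde\Omega$ on the total space; the class of Theorem \ref{thm:nontrivial twSH} is obtained in exactly this manner. So $\sh_\Omega(\sbf(X^6))$ is a legitimate instance of the invariant to which Theorem \ref{thm:vanishing SH} applies.

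I do not expect any genuine obstacle at this stage: the corollary is a formal consequence of the cited results. It is worth emphasizing, though, why twisting is essential here. By Theorem \ref{subflexthm} the domain $\sbf(X^6)$ is subflexible, hence by Corollary \ref{coro:transfer} its \emph{ordinary} symplectic cohomology vanishes; thus the usual Viterbo-transfer obstruction is blind to nonflexibility of $\sbf(X^6)$, and all the real content is pushed into the computation of \emph{twisted} symplectic cohomology in \cite{Siegel} (Theorem \ref{thm:nontrivial twSH}) together with the twisted vanishing theorem (Theorem \ref{thm:vanishing SH}), which supplies the obstruction.
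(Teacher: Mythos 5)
Your argument is correct and is exactly the paper's (implicit) proof: the corollary follows by combining Theorem \ref{thm:vanishing SH} (flexibility forces vanishing of $\sh_\Omega$ for every closed two-form $\Omega$) with the nonvanishing computation of Theorem \ref{thm:nontrivial twSH}. Your extra remark clarifying that the twisting class of Theorem \ref{thm:nontrivial twSH} genuinely lives on (or extends to) the total space $\sbf(X^6)$ is a sensible reading of the paper's slightly loose phrasing and does not change the argument.
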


\sss

The next result describes the symplectomorphism type of $\sbf(X^6)$ after certain small deformations. 
Let $\omega_{sf}$ and $\omega_+$ denote the symplectic forms on $\wh{\sbf(X^6)}$ and $\wh{X^6_+}$ respectively.
\begin{thm}\label{deformationthm}
There are closed two-forms $\Omega_{sf}$ and $\Omega_+$ on $\wh{\sbf(X^6)}$ and $\wh{X^6_+}$ respectively such that  $(\wh{\sbf(X^6)},\omega_{sf} + \e\Omega_{sf})$ and $(\wh{X^6_+},\omega_+ + \e\Omega_+)$ are symplectomorphic for $\e>0$ sufficiently small.  
\end{thm}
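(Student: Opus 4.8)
The plan is to exploit the Lefschetz-theoretic description of both sides and reduce the statement to Seidel's fragility result for squared Dehn twists in dimension two. Recall that $\sbf(X^6)$ has Lefschetz presentation $\fW(M^4 \cup H_1 \cup \ldots \cup H_k; \tau_{S_1}^2 V_1, \ldots, \tau_{S_k}^2 V_k)$, while $X^6_+$ has presentation $\fW(M^4 \cup H_1 \cup \ldots \cup H_k; V_1, \ldots, V_k)$ (the handles $H_i$ being attached along $\bdy T_i$, away from the vanishing cycles). So both total spaces are built from the \emph{same} Weinstein Lefschetz datum except that the vanishing cycles $V_i$ on one side are replaced by $\tau_{S_i}^2 V_i$ on the other. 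By the last bullet in the list of basic facts about Lefschetz fibrations in \S\ref{sec:Lefschetz structures}, adding a closed two-form $\Omega$ supported away from the vanishing cycles to the fiber translates to adding a definite closed two-form $\wt\Omega$ to the total space. Hence it suffices to produce, for small $\e > 0$, a \emph{fiberwise} symplectomorphism phenomenon: a closed two-form $\sigma$ on $M^4 \cup H_1 \cup \ldots \cup H_k$, supported near $S_1 \cup \ldots \cup S_k$ and disjoint from the $V_i$, such that in $(M \cup \bigcup H_i,\, \omega_M + \e\sigma)$ each $\tau_{S_i}^2 V_i$ becomes Hamiltonian isotopic to $V_i$.

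The key input is Seidel's theorem \cite{seidel1997floer}: the square of a Dehn twist $\tau_S$ around a Lagrangian $2$-sphere $S$ in a symplectic $4$-manifold, while not symplectically isotopic to the identity, \emph{becomes} Hamiltonian isotopic to the identity after an arbitrarily small non-exact deformation of the symplectic form supported near $S$ — more precisely, one deforms $\omega$ by a small multiple of a closed form Poincar\'e-dual to $S$ (or concentrated in a Weinstein neighborhood $D^*_\rho S$), under which the monodromy of the relevant local model degenerates. Applying this in a Weinstein neighborhood of each $S_i$ — which we may take pairwise disjoint and disjoint from all $V_j$, since $S_i$ meets $M$ only along $T_i \cup (\text{core of } H_i)$ and $T_i$ can be pushed off the $V_j$ by the Liouville flow as in the construction of $\fW$ — gives local closed two-forms $\sigma_i$ and, after deforming $\omega_M$ by $\e \sum_i \sigma_i$, Hamiltonian isotopies carrying $\tau_{S_i}^2 V_i$ to $V_i$ inside the deformed fiber. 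One must check these isotopies are compatible with the cyclic ordering / disjointness of the Legendrian lifts and with the horizontal boundary triviality, but this is routine since everything happens in disjoint compact regions.

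From here I would assemble the global statement. Set $\Omega_{sf} := \wt{\sum_i \sigma_i}$, the closed two-form on $\wh{\sbf(X^6)}$ produced by the last bullet of \S\ref{sec:Lefschetz structures} from the fiberwise data; then $(\wh{\sbf(X^6)}, \omega_{sf} + \e\Omega_{sf})$ is symplectomorphic to $\fShat(M \cup \bigcup H_i,\, \omega_M + \e\sum\sigma_i;\, \tau_{S_1}^2 V_1,\ldots,\tau_{S_k}^2 V_k)$. By the previous paragraph the deformed vanishing cycles are Hamiltonian isotopic to $V_1,\ldots,V_k$ in the deformed fiber, and $\fShat$ is invariant under Hamiltonian isotopy of the vanishing cycles, so this is symplectomorphic to $\fShat(M \cup \bigcup H_i,\, \omega_M + \e\sum\sigma_i;\, V_1,\ldots,V_k)$, which in turn — applying the last bullet in the other direction, with $\Omega_+ := \wt{\sum_i\sigma_i}$ now viewed on $\wh{X^6_+}$ — is symplectomorphic to $(\wh{X^6_+}, \omega_+ + \e\Omega_+)$, using the identification of $\fW(M^4 \cup \bigcup H_i; V_1,\ldots,V_k)$ with $X^6_+$ recorded in \S\ref{subsec:subflex}. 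Chaining the symplectomorphisms gives the theorem.

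The main obstacle is the second paragraph: invoking Seidel's fragility result in exactly the form needed. Seidel's original statement is about the global monodromy of a fixed symplectic $4$-manifold, whereas here I need it as a statement about a single Lagrangian sphere $S_i$ in an open symplectic $4$-manifold, deforming the symplectic form only in a Weinstein neighborhood of $S_i$ and concluding a \emph{local} Hamiltonian isotopy $\tau_{S_i}^2 \simeq \op{id}$ supported near $S_i$ — which then automatically conjugates $\tau_{S_i}^2 V_i$ to $V_i$. One must verify that the deformation can be taken compactly supported (so that it genuinely extends by zero and the basic facts of \S\ref{sec:Lefschetz structures} apply) and that the resulting form on the total space is honestly closed and small. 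I expect this to be the delicate point; the rest is bookkeeping with the functoriality properties of $\fShat$ already catalogued in the excerpt.
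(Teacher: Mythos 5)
Your proposal follows essentially the same route as the paper's proof: identify both completions as $\fShat$ of the same fiber with vanishing cycles $V_i$ versus $\tau_{S_i}^2V_i$, deform the fiber by a closed two-form supported away from the vanishing cycles (the paper takes $\Omega$ Poincar\'e dual to the cocores of the $H_i$, which in particular pairs nontrivially with each $S_i$), invoke a localized version of Seidel's fragility result (isolated in the paper as Lemma \ref{lem:squaredehntwist}, with the compactly supported family $\Phi_s$ built from the $D^*_rS^2$ model via Moser--Weinstein) to make $\tau_{S_i}^2V_i$ Hamiltonian isotopic to $V_i$ in the deformed fiber, and conclude by Hamiltonian-isotopy invariance of $\fShat$. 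The only detail to tighten is that the twisting form's support must be kept disjoint from \emph{both} collections $V_1,\ldots,V_k$ and $\tau_{S_1}^2V_1,\ldots,\tau_{S_k}^2V_k$ (not just the $V_i$), since the last bullet of \S\ref{sec:Lefschetz structures} is applied to both Lefschetz fibrations; the paper's choice of $\Omega$ arranges this.
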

\begin{remark}
In \cite{ritter2010deformations}, Ritter proves the isomorphism $\sh_{\Omega}(X,\omega) \cong \sh(X,\omega+\Omega)$ in the case that $\Omega$ has compact support, provided $\omega + t\Omega$ is symplectic for all $t \in [0,1]$.
Suppose we also knew this to hold when $\Omega$ does not have compact support,
 and furthermore that the symplectic cohomology of $(\wh{\sbf(X)},\omega_{sf} + \e\Omega_{sf})$ is well-defined and invariant under symplectomorphisms. Then for small $\e > 0$ we would have
 \begin{align*}
\sh_{\e\Omega_{sf}} (\sbf(X^6)) &\cong \sh(\sbf(X^6),\omega_{sf} + \e\Omega_{sf}) \\&\cong \sh (X^6_+,\omega_+ +\e\Omega_+) \\&\cong \sh_{\e\Omega_+}(X^6_+).
\end{align*}
Moreover, based on the behavior of symplectic cohomology with respect to subcritical handles, it seems reasonable to guess that that latter term is isomorphic to $\sh(X^6)$.
This gives at least a heuristic explanation of Theorem \ref{thm:nontrivial twSH}.
 \end{remark}

To prove Theorem \ref{deformationthm}, we need the following lemma which is essentially due to Seidel.

\begin{lemma}\label{lem:squaredehntwist}
Let $(M^4,\omega)$ be a symplectic manifold, with $S \subset M$ a Lagrangian sphere and $\Omega$ a closed two-form on $M$ such that $[\Omega|_S] \neq 0 \in H^2(S;\R)$.
Consider the deformation of the symplectic form given by $\omega_s := \omega + s\Omega$.
Then for any neighborhood $U$ of $S$, there is a smooth family of symplectomorphisms $\Phi_s$ of $(M^4,\omega_s)$, defined for all $s \geq 0$ sufficiently small, such that
\begin{itemize}
\item
$\Phi_0 = \tau_S^2$
\item
$\Phi_s$ is supported in $U$ for all $s$
\item
for $s>0$, $\Phi_s$ is Hamiltonian isotopic to the identity by an isotopy supported in $U$.
\end{itemize}
\end{lemma}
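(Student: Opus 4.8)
The plan is to reduce the statement to Seidel's computation of the symplectic monodromy of the standard Lefschetz fibration on $D^*S^2$ and then deform within a Weinstein neighborhood of $S$. First I would recall that $S^2$ has a unique exact symplectic Lefschetz fibration presentation $D^*S^2 \cong \fS(D^*S^1;S_0)$ with one vanishing cycle, and more usefully that the Dehn twist $\tau_S$ is realized geometrically on a neighborhood of $S$ modeled on $D^*_r S^2$ by the ``model'' monodromy map $\mu$ built from the normalized geodesic flow. Seidel's observation in \cite{seidel1997floer} is that $\mu^2$, while not Hamiltonian isotopic to the identity in $(D^*_rS^2,\omega_{\can})$, becomes so once one adds to $\omega_{\can}$ a small multiple of a closed two-form $\Omega$ with $[\Omega|_S]\neq 0$: the obstruction to such an isotopy lives in (a quotient of) $H^1$ of the fiber, which over $\R$ is detected by the flux, and turning on the area class on $S$ changes the flux homomorphism so that $\mu^2$ lands in its kernel.

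The key steps, in order, would be: (1) Fix a Weinstein neighborhood $N \cong D^*_r S^2$ of $S$ contained in the given $U$, with $\omega|_N = \omega_{\can}$ up to the usual Weinstein normalization, and arrange $\Omega$ to be supported in $N$ within its cohomology class (since we only care about $[\Omega|_S]$, we may replace $\Omega$ by a cohomologous form pulled back from a tubular-neighborhood retraction, which is compactly supported in $N$; only the restriction to $S$ matters). (2) Write down the explicit model symplectomorphism $\Phi_s$ of $(N,\omega_{\can}+s\Omega|_N)$: start from the model squared-Dehn-twist $\mu^2 = \Phi_0$ supported away from $\bdy N$, and use the parametrized Moser/flux argument to correct it by a Hamiltonian so that it is a symplectomorphism of the deformed form; this is where the condition $[\Omega|_S]\neq 0$ enters, guaranteeing that for $s>0$ the relevant flux class is exact and $\Phi_s$ is in fact Hamiltonian isotopic to the identity, with the isotopy compactly supported in $N$. (3) Extend $\Phi_s$ by the identity to all of $M$; since everything is supported in $N \subset U$ this is automatic, and the three bulleted properties hold by construction. (4) Check smoothness of the family $\Phi_s$ in $s$ at $s=0$: this follows because $\Phi_s$ is obtained by solving a Moser ODE whose coefficients depend smoothly on $s$, the $s=0$ case being $\mu^2$ itself.

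The main obstacle I expect is step (2): making the deformation argument genuinely parametric and ensuring the correcting Hamiltonian isotopy stays supported in $U$ (equivalently, that the flux can be killed by a path of symplectomorphisms trivial near $\bdy N$). Concretely, one must show that the symplectic isotopy class of $\mu^2$ rel boundary, viewed inside $\Symp_c(D^*_rS^2,\omega_{\can}+s\Omega)/\Symp_{c,0}$, is the identity for $s>0$; Seidel computes the relevant invariant (a ``winding number''/flux-type class valued in $H^1$ of the $S^1$-fiber, i.e. in $\R$ over our coefficient field) and shows it scales linearly in the period $\int_S \Omega$, hence vanishes precisely when that period is zero and is otherwise constant in $s$ — but what we need is the complementary statement that after deformation the \emph{obstruction to being Hamiltonian}, not the flux of $\mu^2$ itself, dies. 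The cleanest route is probably to quote \cite[\S 1--2]{seidel1997floer} directly for the model $(D^*_rS^2, \omega_{\can}+s\Omega)$ and then transport the conclusion through the Weinstein neighborhood identification, so that the only new content here is the bookkeeping that the identification and the support conditions are compatible; I would present the lemma's proof in that spirit rather than redoing Seidel's fixed-point / flux computation from scratch.
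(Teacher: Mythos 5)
Your proposal is correct and follows essentially the same route as the paper: reduce to the local model $D^*_rS^2$ with a deformed symplectic form, quote Seidel for the fact that the model squared Dehn twist becomes Hamiltonian isotopic to the identity (compactly supported) once the area class on $S$ is turned on, and transport back through a Weinstein neighborhood of $S$ inside $U$. The only difference is bookkeeping: rather than replacing $\Omega$ by a compactly supported cohomologous form (which perturbs $\omega_s$ by an exact term and forces an extra global Moser conjugation), the paper fixes the standard model $(D^*_rS^2,\omega_{\std}+s\Omega_{\std})$ with $\Omega_{\std}$ an $SO(3)$-invariant area form of period $\int_S\Omega$ and produces a smooth family of symplectic embeddings $E_s$ into $(M,\omega_s)$ by a parametrized Moser--Weinstein argument, then pushes forward the model family $\Phi_s$.
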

\begin{proof}  
Let $(D^*_rS^2,\omega_{\std})$ denote the radius $r$ disk cotangent bundle of $S^2$ with respect to the round metric, for some $r$ sufficiently small.
Let $\Omega_{\std}$ denote the pullback to $D^*_rS^2$ of an $SO(3)$ invariant area form on $S^2$ with total area equal to $\int_S \Omega$.
Consider the deformation $(D^*_rS^2,\omega_{\std}+s\Omega_{\std})$ for $s \geq 0$ small.
By the usual Moser--Weinstein technique we can find a smooth family of symplectic embeddings 
\begin{align*}
E_s: (D^*_rS^2,\omega_{\std} + s\Omega_{\std}) \hookrightarrow (M^4,\omega_s)
\end{align*}
for all $s \geq 0$ sufficiently small.
By shrinking $r$ if necessary, we can ensure that all of these embeddings have image in $U$.

\begin{sloppypar} Now, following Seidel \cite{seidel2008lectures} there is a smooth family of symplectomorphisms $\Phi_s$  of ${(D^*_rS^2,\omega_{\std} + s\Omega_{\std})}$, defined for all $s \geq 0$ sufficiently small, such that \end{sloppypar}
\begin{itemize}
\item
$\Phi_0 = \tau_S^2$
\item 
$\Phi_s$ is supported in $\Int D^*_rS^2$
\item 
$\Phi_s$ is Hamiltonian isotopic to the identity by an isotopy supported in $\Int D^*_r S^2$.
\end{itemize}
Then the pushforward of $\Phi_s$ by $E_s$ has the desired properties.
\end{proof}

\sss

\noindent \textit{Proof of Theorem \ref{deformationthm}}:
Let $(M^4_{sf},\theta,\psi)$ denote the Weinstein fiber $\sbf(X^6)$, i.e. the result of attaching the Weinstein handles $H_1,...,H_k$ to the fiber of $X^6$.
We have symplectomorphisms
\begin{align*}
(\wh{\sbf(X)},\omega_{sf}) &\cong \wh{\fS}(M_{sf},d\theta;\tau_{S_1}^2V_1,...,\tau_{S_k}^2V_k)\\
(\wh{X_+},\omega_+) &\cong \wh{\fS}(M_{sf},d\theta;V_1,...,V_k).
\end{align*}
Let $\Omega$ be a closed two-form on $M_{sf}$ whose support is disjoint from $V_1 \cup ... \cup V_k \cup \tau_{S_1}^2V_1 \cup ... \cup \tau_{S_k}^2V_k$ and such that 
$[\Omega|_{S_i}] \neq 0 \in H^2(S_i;\R)$ for $1 \leq i \leq k$
and $\Omega$ is exact near $\bdy M_{sf}$ (we can take $\Omega$ to be Poincar\'e dual to the union of the cocores of the handles $H_1,...,H_k$).
As in $\S \ref{sec:Lefschetz structures}$,
we can find closed two-forms $\Omega_{sf}$ and $\Omega_+$ on $\wh{\sbf(X)}$ and $\wh{X_+}$ respectively such that 
for small $\e > 0$ we have symplectomorphisms
\begin{align*}
(\wh{\sbf(X)},\omega_{sf} + \e\Omega_{sf}) &\cong \wh{\fS}(M_{sf},d\theta + \e\Omega;\tau_{S_1}^2V_1,...,\tau_{S_k}^2V_k)\\
(\wh{X_+},\omega_+ + \e\Omega_+) &\cong \wh{\fS}(M_{sf},d\theta + \e\Omega;V_1,...,V_k).
\end{align*}
By Lemma \ref{lem:squaredehntwist}, $\tau_{S_i}^2V_i$ is Hamiltonian isotopic to $V_i$ in the symplectic manifold $(M_{sf},d\theta + \e\Omega)$ for $1 \leq i \leq k$.
By the Hamiltonian isotopy invariance of $\wh{S}$, it follows that 
$(\wh{\sbf(X)},\omega_{sf} + \e\Omega_{sf})$ and $(\wh{X_+},\omega_+ + \e\Omega_+)$ are symplectomorphic.
$\hfill\Box$

\section{Applications}\label{sec:applications}

We now make use of subflexibilization to construct exotic examples.
The starting point is Abouzaid--Seidel's \cite{abouzaid2010altering}  construction of an affine variety $U^{2n}$ for any $2n \geq 6$ such that $U$ is diffeomorphic to $\C^n$ and
$\sh(U) \neq 0$. Based on work of McLean \cite{mcleanlefschetz}, the idea is to take $U$ to be the Kaliman modification of $(\C^n,H,p)$, where $H \subset \C^n$ is a singular hypersurface given by the zero set of a weighted homogenous polynomial and $p \in H$ is a smooth point. This means that $U$ is obtained by blowing up $\C^n$ at $p$ and then excising the proper transform of $H$.
Choosing the weighted homogenous polynomial carefully, Abouzaid--Seidel show that $\C^n \setminus H$ has nonvanishing symplectic cohomology, and therefore so does $U$ by \cite[Theorem 2.31]{mcleanlefschetz}. Define $l_{0} \in \Z$ to be the minimal number of vanishing cycles of a Weinstein Lefschetz presentation for $U^{6}$ satisfying Assumption \ref{mainassumption}.

\sss

Before proving Theorem \ref{thm:advertising}, we briefly consider the effects of products, boundary connect sums, and subcritical handle attachments on subflexibility and twisted symplectic cohomology.

\begin{lemma}\label{lem:subflexprod}
Let $(W_1,\la_1,\phi_1)$ be a subflexible Weinstein domain and $(W_2,\la_2,\phi_2)$ any Weinstein domain. Then $(W_1,\la_1,\phi_1) \times (W_2,\la_2,\phi_2)$ is subflexible.
\end{lemma}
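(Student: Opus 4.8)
The plan is to exhibit $W_1 \times W_2$ as a regular sublevel set of a flexible Weinstein manifold, namely a product. Since $W_1$ is subflexible, it is Weinstein deformation equivalent to a regular sublevel set $\{\widehat\phi_F \le c\}$ of a flexible Weinstein manifold $\widehat F$ with Morse function $\widehat\phi_F$; as $\widehat F$ is finite type it is the completion of a flexible Weinstein domain, and after a Weinstein homotopy we may assume it is the completion of an explicitly flexible domain (under such a homotopy the sublevel set recovering $W_1$ changes only by Weinstein deformation equivalence). By Proposition \ref{prop:products are flexible}, the product $\widehat F \times \widehat{W_2}$ is then an explicitly flexible Weinstein manifold; alternatively, for general $\widehat F$ it is Weinstein homotopic to such a product and hence flexible.

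The second step is a careful choice of Morse data. Normalize $\widehat\phi_F$ and $\widehat\phi_2$ to be nonnegative, and after a further Weinstein homotopy of $\widehat F$ rescale so that the critical values of $\widehat\phi_F$ coming from $W_1$ lie in a small interval $[0,\epsilon_1]$ while the remaining critical values of $\widehat\phi_F$ (those lying above level $c$) all exceed some $\gamma$; likewise arrange the critical values of $\widehat\phi_2$ to lie in $[0,\epsilon]$ with $\epsilon_1 + \epsilon < \gamma$. Then $\Phi := \widehat\phi_F + \widehat\phi_2$ is a proper, finite-type Weinstein Morse function for $\widehat F \times \widehat{W_2}$ whose critical points are products of those of the two factors. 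For any regular value $e$ with $\epsilon_1 + \epsilon < e < \gamma$, the sublevel set $\{\Phi \le e\}$ contains precisely the critical points of the form (critical point of $\widehat\phi_F$ coming from $W_1$) $\times$ (critical point of $\widehat\phi_2$), and in particular $\{\Phi \le e\} \subset \{\widehat\phi_F < \gamma\} \times \widehat{W_2}$.

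The crux is identifying $\{\Phi \le e\}$ with $W_1 \times W_2$. The region $\{\widehat\phi_F < \gamma\} \subset \widehat F$ is just $W_1$ with a piece of symplectization collar attached, hence is a sublevel-region of $\widehat{W_1}$, and the restriction of $\widehat\phi_F$ to it, extended over all of $\widehat{W_1}$, is a Morse function that is $C^\infty$-small on $W_1$ and increasing on the end, i.e., of exactly the type used to define the product Weinstein domain. Under the resulting identification, $\{\Phi \le e\}$ becomes $\{\widehat\phi_{W_1} + \widehat\phi_2 \le e\}$ inside $\widehat{W_1} \times \widehat{W_2}$ with $e$ above all critical values of $\widehat\phi_{W_1} + \widehat\phi_2$, and since any two sublevel sets at regular values above all critical values are Weinstein deformation equivalent, this is $W_1 \times W_2$ up to deformation. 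Hence $W_1 \times W_2$ is deformation equivalent to a regular sublevel set of the flexible Weinstein manifold $\widehat F \times \widehat{W_2}$, so it is subflexible.

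The main obstacle is precisely this identification: a sublevel set of the sum $\widehat\phi_F + \widehat\phi_2$ is not a product of sublevel sets, so one must use the freedom in the Morse function to push the handles of $F$ lying above level $c$ to large values, ensuring they do not appear in $\{\Phi \le e\}$, and then recognize the truncated product as the product Weinstein domain. The remaining ingredients are formal: flexibility of products is Proposition \ref{prop:products are flexible}, and comparison of sublevel sets at different regular values above all critical values is standard Weinstein Morse theory.
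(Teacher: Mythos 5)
Your proposal is correct and follows essentially the same route as the paper: realize $W_1$ as a sublevel set of a flexible structure, push the critical values of the extra handles high enough that a suitable sublevel set of the sum $\wh{\phi}+\wh{\phi_2}$ on the (flexible, by Proposition \ref{prop:products are flexible}) product captures exactly the critical points coming from $W_1\times W_2$, and identify that sublevel set with the product domain. The only difference is cosmetic — you normalize critical values into intervals $[0,\epsilon_1]$, $[0,\epsilon]$ versus the paper's inequality $m > m_1+m_2-\min_{W_2}\phi_2$ — and you spell out the final identification of the truncated sublevel set with $W_1\times W_2$ in slightly more detail than the paper does.
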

\begin{proof}
Up to a Weinstein homotopy we can find a flexible Weinstein domain $(X,\la,\phi)$ of which $(W_1,\la_1,\phi_1)$ is a sublevel set. Let $m_1$ and $m_2$ denote the maximal critical values of $\phi_1$ and $\phi_2$ respectively.
We can arrange that any critical points of $\phi$ on $X \setminus W_1$ have critical value at least $m$, for some $m > m_1 + m_2 - \min_{W_2}\phi_2$.
By Proposition \ref{prop:products are flexible}, $(\wh{X} \times \wh{W_2},\wh{\la} + \wh{\la_2},\wh{\phi} + \wh{\phi_2})$ is flexible, and moreover the sublevel set $\{\phi + \phi_2 \leq  C \}$ is Weinstein deformation equivalent to $W_1 \times W_2$ for any 
\begin{align*}
m_1 + m_2 < C <  m + \min_{W_2} \phi_2.
\end{align*}
\end{proof}
\begin{lemma}\label{lem:subflexsum}
Let $W_1$ and $W_2$ be subflexible Weinstein domains. Then $W_1 \bcs W_2$ is subflexible.
\end{lemma}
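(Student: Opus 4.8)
The plan is to exhibit $W_1 \bcs W_2$ as a sublevel set of a flexible Weinstein manifold assembled from flexible ``extensions'' of $W_1$ and $W_2$. First I would use the hypothesis to write, after a Weinstein homotopy, $W_i$ as a sublevel set $\{\phi_i \leq 0\}$ of a flexible Weinstein manifold $\wh{X_i}$ for $i=1,2$. Replacing $\wh{X_i}$ by an explicitly flexible representative and applying a further Weinstein homotopy, I can arrange that all critical points of $\phi_i$ lying outside $W_i$ have critical value at least $m$, for some large $m>0$; thus $\wh{X_i}$ is obtained from $W_i$ by a finite sequence of Weinstein handle attachments taking place at levels $\geq m$, the index $n+1$ handles among them being attached along loose Legendrian links (Remark~\ref{remark:explicitflexcrit}(3)).

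Next I would pick generic points $p_i \in \bdy W_i$ and build a Weinstein domain $Z$ as follows: starting from $W_1 \sqcup W_2$, attach a subcritical Weinstein $1$-handle joining a small ball around $p_1$ to a small ball around $p_2$, producing $W_1 \bcs W_2$; then attach the handles that build $\wh{X_1}$ out of $W_1$, along the same framed parametrized isotropic spheres, now viewed in the level sets of $W_1 \bcs W_2$ and its successive modifications (which agree with the corresponding level sets of $\wh{X_1}$ away from the connect-sum region); and finally attach the handles that build $\wh{X_2}$ out of $W_2$ in the same way. Genericity of $p_i$ ensures that a sufficiently small connect-sum ball around it is disjoint from the positive-codimension trace in $\bdy W_i$ of the descending skeleton of $\wh{X_i}$ above level $0$, so all of these attachments are well defined and the handles of the two factors never interact with each other or with the connecting $1$-handle.

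The crucial observation is then that $\wh{Z}$ is flexible. Its index $n+1$ critical points are exactly those of $\wh{X_1}$ and $\wh{X_2}$ — the connecting $1$-handle and any subcritical handles inside $W_1$ or $W_2$ play no role — and since looseness of a Legendrian link can be checked inside an arbitrarily small neighborhood of its loose charts, each such critical handle of $Z$ is still attached along a loose Legendrian link: near the relevant loose charts the contact geometry is identical to that in $\wh{X_1}$ (resp.\ $\wh{X_2}$), being unaffected by the connect sum and by the handles of the other factor. After perturbing the critical values to be pairwise distinct, Remark~\ref{remark:explicitflexcrit}(3) shows that $Z$, and hence $\wh{Z}$, is flexible. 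Finally, if $c$ is a regular value of the Morse function $\Phi$ of $Z$ lying above the critical value of the connecting $1$-handle but below $m$, then the sublevel set $\{\Phi \leq c\} \subset \wh{Z}$ is Weinstein deformation equivalent to $W_1 \bcs W_2$, so the latter is subflexible.

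I expect the genuinely fiddly point to be the middle step rather than anything conceptual: verifying that boundary connect sum commutes with Weinstein handle attachment performed away from the connecting region, and that this rearrangement leaves the loose charts of the critical handles intact. An alternative that trades this bookkeeping for an appeal to the Cieliebak--Eliashberg handle-manipulation calculus would be to form $X_1 \bcs X_2$ — which is flexible, being a boundary connect sum of flexible domains — and then Weinstein-homotope it so as to slide the connecting subcritical $1$-handle down below all of the handles lying above $W_1$ and $W_2$, after which $W_1 \bcs W_2$ appears directly as a sublevel set.
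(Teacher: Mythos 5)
Your proposal is correct and follows essentially the same route as the paper: realize each $W_i$ as a sublevel set of a flexible $\wh{X_i}$ obtained by handle attachments disjoint from the connect-sum region, attach all of those handles to $W_1 \bcs W_2$ to obtain $X_1 \bcs X_2$, and observe that a boundary connect sum of flexible domains is flexible. The paper states this in two sentences; your write-up simply makes explicit the genericity and loose-chart-preservation bookkeeping that the paper leaves implicit.
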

\begin{proof}
We can assume that $W_1$ and $W_2$ become flexible after attaching some number of Weinstein handles, whose attaching regions are disjoint from the attaching region of the $1$-handle joining $W_1$ and $W_2$ in the construction of $W_1 \bcs W_2$.
Therefore we can attach Weinstein handles to $W_1 \bcs W_2$ to obtain a boundary connect sum of two flexible Weinstein domains, and the latter is clearly flexible.
\end{proof}

\begin{prop}\label{prop:subcrit handle attach}
Let $(W_0,\la_0)$ be a Liouville domain, and let $(W,\la)$ be the resulting Liouville domain after attaching a subcritical Weinstein handle. Assume $c_1(W) = c_1(W_0) = 0$. Then for any closed two-form $\Omega$ on $W$, we have an isomorphism
\begin{align*}
\sh_{\Omega}^*(W,\la) \cong \sh_{\Omega|_{W_0}}^*(W_0,\la_0).
\end{align*}
\end{prop}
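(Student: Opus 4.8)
The plan is to adapt Cieliebak's computation of symplectic cohomology under subcritical handle attachment (see \cite{cieliebak2012stein} and the references therein) to the twisted setting; as with the K\"unneth theorem above, the twisting coefficients should come along for the ride once one observes that the Floer trajectories responsible for the extra cancellation are confined to a region on which $\Omega$ is exact. To set things up, attaching a subcritical Weinstein handle to $(W_0,\la_0)$ produces a Liouville embedding $(W_0,\la_0)\hookrightarrow(W,\la)$, so Theorem \ref{thm:transfer} furnishes a unital $\K$-algebra map $\sh_\Omega(W,\la)\to\sh_{\Omega|_{W_0}}(W_0,\la_0)$; the claim is that this map is an isomorphism.

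Following Cieliebak, I would compute $\sh_\Omega(W,\la)$ with a Hamiltonian on $\wh W$ which is $C^2$-small on $W$, linear at infinity, and suitably adapted to the handle, together with a generic compatible almost complex structure. For such data the $1$-periodic orbits fall into two groups: the ``old'' orbits, in action- and Conley--Zehnder-index-preserving bijection with those computing $\sh_{\Omega|_{W_0}}(W_0,\la_0)$, and the ``new'' orbits---namely the constant orbit at the new index-$k$ critical point of the Weinstein Morse function together with nonconstant orbits clustered near the belt sphere. All of the new orbits lie in a neighborhood $\calN$ of the handle $H\cong\D^k\times\D^{2n+2-k}$ obtained by adjoining a collar of its positive boundary, and this $\calN$ is contractible, so $\Omega|_\calN=d\beta$ for some primitive $\beta$. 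An energy/maximum-principle argument confines the Floer trajectories among the new orbits to $\calN$, so for such a trajectory $u$ from $x$ to $y$ one has $\int u^*\Omega=\beta(y)-\beta(x)$ up to a fixed additive constant; the gauge change $x\mapsto t^{-\beta(x)}x$ then identifies the twisted differential on the span of the new orbits with its untwisted counterpart. Cieliebak's argument for a subcritical handle---which uses the $\Z$-grading coming from $c_1(W)=0$ to pair the new orbits into cancelling pairs---now shows that the new orbits generate an acyclic subquotient, split off the Floer complex of $W_0$ compatibly with the transfer map. This would show that the transfer map induces an isomorphism on cohomology, proving the proposition.

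The substance of the argument, exactly as in \cite{oancea2006kunneth}, lies in making ``the twisting comes along for the ride'' precise. One must check that every moduli space entering either the acyclicity of the new part or the identification of the quotient complex with that of $W_0$ is either contained in $\calN$, where $\Omega$ is exact, or else is excluded by the same action and index constraints used in the untwisted case---and those constraints are unaffected by $\Omega$, since the twisting changes neither the energy of a Floer trajectory nor the Conley--Zehnder index of an orbit. The main obstacle is therefore organizational rather than conceptual: one is rerunning Cieliebak's filtration-and-cancellation bookkeeping with the extra weights $t^{\int u^*\Omega}$ carried along, the transversality and compactness inputs being identical to those of the untwisted proof.
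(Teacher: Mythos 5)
Your overall strategy (reduce to the untwisted subcritical-handle computation and argue that the weights $t^{\int u^*\Omega}$ do not interfere) is the right one, but the mechanism you propose differs from the paper's and contains a step that is not justified as stated. The paper's proof, following \cite[\S 10.3]{mcleanlefschetz}, chooses the sequence of admissible Hamiltonians so that the handle contributes a \emph{single constant} orbit whose Conley--Zehnder index is unbounded in the direct limit (this is where $c_1(W)=c_1(W_0)=0$ enters, to have a $\Z$-grading at all). That argument never inspects the coefficients of any differential involving the new orbit --- in each fixed degree the new generator is eventually absent from the complex and from the adjacent degrees --- so the twisting weights are irrelevant by inspection, with no confinement of trajectories and no gauge transformation required. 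By contrast, your route rests on the assertion that all Floer trajectories between ``new'' orbits are confined to a contractible neighborhood $\calN$ of the handle. The standard maximum principle for admissible data confines trajectories to sublevel sets of the radial coordinate, not to a neighborhood of the handle; a trajectory between two orbits near the belt sphere can a priori dip into $W_0$ and return, and $\calN$ is not a Liouville subdomain to which a no-escape lemma directly applies. Without that confinement your gauge change $x\mapsto t^{-\int_x\beta}x$ does not identify the twisted and untwisted differentials on the new part, and this is exactly the kind of trajectory the twisting could in principle see. (Also, the quantity appearing in the gauge change should be $\int_{\gamma}\gamma^*\beta$ over the orbit $\gamma$, not a value of $\beta$ at a point, and the identity $\int u^*\Omega=\int_{\gamma_+}\beta-\int_{\gamma_-}\beta$ is exact by Stokes, not ``up to a fixed additive constant.'')

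A second, smaller inaccuracy: you describe Cieliebak's argument as ``pairing the new orbits into cancelling pairs.'' The actual mechanism, both in \cite{handleattachinginSH} and in the variant the paper uses, is that the indices of the new generators tend to infinity as the slope of the Hamiltonian grows, so they drop out of each fixed degree in the direct limit; no algebraic cancellation among new orbits is invoked, and indeed an acyclicity statement for the ``new'' subcomplex at fixed slope would be false in general (a single constant generator is not acyclic). If you rephrase your proof around the index-escape argument, the twisting genuinely does come along for free and the confinement issue disappears; as written, the confinement step is a genuine gap.
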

In the case of untwisted symplectic cohomology this is well-known result of Cieliebak \cite{handleattachinginSH}.
Following \cite[\S 10.3]{mcleanlefschetz}, the isomorphism is induced by the transfer map, which can be constructed using a sequence of admissible Hamiltonians in such a way that the subcritical handle introduces a single constant Hamiltonian orbit whose index is unbounded in the direct limit (here we need the Chern class assumption to define a $\Z$-grading).
In particular, one can easily check the the same proof applies in the presence of a twisting two-form.

\sss

\noindent \textit{Proof of Theorem \ref{thm:advertising}}:
We take $X'$ to be $\flex(X^{6}) \bcs \sbf(U^6)$ in the case $\dim X' = 6$, and for $2n > 6$ we set $$X' := \flex(X^{2n}) \bcs ( \sbf(U^6) \times D^*S^{2n-6})$$
(many other options are also possible, giving different subcritical topologies).
As explained in \S\ref{subsec:subflex}, $\sbf(U^6)$ is almost symplectomorphic to $U_+$, which is the result of attaching $l_0$ two-handles to the six-dimensional ball.
In particular, $\sbf(U^6)$ is almost symplectomorphic to a subcritical space, and hence so is $\sbf(U^6) \times D^*S^{2n-6}$.
Since $\flex(X)$ is almost symplectomorphic to $X$, this  proves the first part of the theorem.

By Theorem \ref{subflexthm}, $\sbf(U^6)$ becomes a ball after attaching $l_0$ Weinstein handles.
In fact, since $\D^2 \times D^*S^k$ becomes the ball after attaching a cancelling $(k+1)$-handle, we can also attach handles to convert $\sbf(U^6) \times D^*S^{2n-6}$ into the ball, and hence $X'$ into $\flex(X^{2n}) \bcs B^{2n} \cong \flex(X^{2n})$. This proves the second part of the theorem.

Finally, we endow $\sbf(U^6)$ with the twisting two-form from Theorem \ref{thm:nontrivial twSH} which makes the twisted symplectic cohomology of $\sbf(U^6)$ nontrivial. Extend this to a closed two-form on $X'$ by pulling it back under the projection $\sbf(U^6) \times D^*S^{2n-6} \rightarrow \sbf(U^6)$ and extending by zero over $\flex(X^{2n})$. 
By Proposition \ref{prop:subcrit handle attach} and Theorem \ref{thm:vanishing SH}, we can ignore the $\flex(X)$ factor for purposes of twisted symplectic cohomology.
The third part of the Theorem now follows by appealing to the K\"unneth Theorem \ref{thm:kunneth}, together with the well-known nonvanishing of symplectic cohomology for $D^*S^{2n-6}$.
$\hfill\Box$

\sss

We conclude by discussing how to use Theorem \ref{thm:advertising} to produce nonflexible polynomially convex domains, disproving the conjecture of Cieliebak--Eliashberg \cite{cieliebak2013topology}.
Following the conventions of \cite{cieliebak2013topology}, recall that a polynomially convex domain in $\C^n$ is a compact domain with smooth boundary $K \subset \C^n$ such that
$K$ coincides with its {\em polynomial hull}
\begin{align*}
\wh{K}_{\mathcal{P}} := \left\{ z \in \C^n \;:\; |P(z)| \leq \max_{u\in K}|P(u)| \text{ for all complex polynomials } P \text{ on } \C^n\right\}.
\end{align*}
(see also \cite{stout2007polynomial} for background on polynomial convexity).
According to Criterion 3.2 of \cite{cieliebak2013topology}, an $i$-convex domain $W \subset \C^n$ is polynomially convex if and only if there exists an exhausting $i$-convex function $\phi: \C^n \rightarrow \R$ for which $W$ is a sublevel set.
Here $W \subset \C^n$ is {\em i-convex} if there is an $i$-convex function on $W$ such that $\bdy W$ is its maximal regular level set.

\sss

\noindent \textit{Proof of Theorem \ref{thm:pc domains}}:
Since $X'$ is deformation equivalent to a sublevel set of $\flex(X)$, we focus on $\flex(X)$.
By \cite[Lemma 2.1]{cieliebak2013topology}, the smooth hypotheses on $X$ imply that the Morse function on $X$ extends to a Morse function on $\C^n$ without critical points of index greater than $n$. By \cite[Theorem 13.1]{cieliebak2012stein}, we can therefore extend the Weinstein structure on $X$ to a flexible Weinstein structure on $\C^n$, and by flexibility this is automatically Weinstein homotopic to the standard structure.  
Theorem 1.5(c) of \cite{cieliebak2013topology} now produces a Weinstein deformation equivalence of $X'$ onto a polynomially convex domain in $\C^n$.
$\hfill\Box$

\bibliographystyle{plain}
\bibliography{subflex}

\end{document}